\documentclass[11pt,a4paper]{amsart}
\usepackage{a4wide}
\usepackage[latin1]{inputenc}
\usepackage{amsmath}
\usepackage{amsfonts}
\usepackage{amssymb}
\usepackage{graphicx}
\usepackage{amsthm}
\usepackage{amssymb}
\usepackage{cite}
\usepackage{mathrsfs}
\usepackage{hyperref}
\usepackage{paralist}
\usepackage{enumitem}
\usepackage{tikz}
\usetikzlibrary{matrix,chains}
\usepackage{dynkin-diagrams}

\begin{document}

	\newcommand{\n}{\mathbf{n}}
	\newcommand{\x}{\mathbf{x}}
	\newcommand{\h}{\mathbf{h}}
	\newcommand{\m}{\mathbf{m}}
	\newcommand{\Ph}{\Phi_}
	
	\newcommand{\bN}{\mathbf{N}}
	\newcommand{\bL}{\mathbf{L}}
	\newcommand{\bG}{\mathbf{G}}
	\newcommand{\bS}{\mathbf{S}}
	\newcommand{\bZ}{\mathbf{Z}}
	\newcommand{\bH}{\mathbf{H}}
	\newcommand{\B}{\mathbf{B}}
	\newcommand{\U}{\mathbf{U}}
	\newcommand{\K}{\mathbf{K}}
	\newcommand{\V}{\mathbf{V}}
	\newcommand{\T}{\mathbf{T}}
	\newcommand{\G}{\mathbf{G}}
	\newcommand{\Para}{\mathbf{P}}
	\newcommand{\Levi}{\mathbf{L}}
	\newcommand{\Y}{\mathbf{Y}}
	\newcommand{\X}{\mathbf{X}}
	\newcommand{\M}{\mathbf{M}}
	\newcommand{\pro}{\mathbf{prod}}
	\renewcommand{\o}{\overline}
	
	\newcommand{\Gtilde}{\mathbf{\tilde{G}}}
	\newcommand{\Ttilde}{\mathbf{\tilde{T}}}
	\newcommand{\Btilde}{\mathbf{\tilde{B}}}
	\newcommand{\Ltilde}{\mathbf{\tilde{L}}}
	\newcommand{\C}{\operatorname{C}}

	\newcommand{\N}{\operatorname{N}}
	\newcommand{\bl}{\operatorname{bl}}
	\newcommand{\Z}{\operatorname{Z}}
	\newcommand{\Gal}{\operatorname{Gal}}
	\newcommand{\modulo}{\operatorname{mod}}
	\newcommand{\kernel}{\operatorname{ker}}
	\newcommand{\Irr}{\operatorname{Irr}}
	\newcommand{\D}{\operatorname{D}}
	\newcommand{\I}{\operatorname{I}}
	\newcommand{\GL}{\operatorname{GL}}
	\newcommand{\SL}{\operatorname{SL}}
	\newcommand{\W}{\mathbf{W}}
	\newcommand{\R}{\operatorname{R}}
	\newcommand{\Br}{\operatorname{Br}}
	\newcommand{\Aut}{\operatorname{Aut}}
	\newcommand{\End}{\operatorname{End}}
	\newcommand{\Ind}{\operatorname{Ind}}
	\newcommand{\Res}{\operatorname{Res}}
	\newcommand{\br}{\operatorname{br}}
	\newcommand{\Hom}{\operatorname{Hom}}
	\newcommand{\Endo}{\operatorname{End}}
	\newcommand{\Ho}{\operatorname{H}}
	\newcommand{\Tr}{\operatorname{Tr}}
	\newcommand{\opp}{\operatorname{opp}}
	\newcommand{\ssc}{\operatorname{sc}}
	\newcommand{\ad}{\operatorname{ad}}

	\newcommand{\tw}[1]{{}^#1\!}

	\newtheorem{definition}{Definition}[section]
	\newtheorem{notation}[definition]{Notation}
	\newtheorem{construction}[definition]{Construction}
	\newtheorem{remark}[definition]{Remark}
	\newtheorem{example}[definition]{Example}

	\newtheorem{theorem}[definition]{Theorem}
	\newtheorem{lemma}[definition]{Lemma}
	\newtheorem{question}{Question}
	\newtheorem{corollary}[definition]{Corollary}
	\newtheorem{proposition}[definition]{Proposition}
	\newtheorem{conjecture}[definition]{Conjecture}
	\newtheorem{assumption}[definition]{Assumption}
	\newtheorem{hypothesis}[definition]{Hypothesis}
	\newtheorem{maintheorem}[definition]{Main Theorem}

	\newtheorem{theo}{Theorem}
	\newtheorem{conj}[theo]{Conjecture}
	\newtheorem{cor}[theo]{Corollary}

	\renewcommand{\thetheo}{\Alph{theo}}
	\renewcommand{\theconj}{\Alph{conj}}
	\renewcommand{\thecor}{\Alph{cor}}

	\newcommand{\ov}{\overline }
	
	\def\oo#1{\overline{\overline{#1}}}

	\title{Equivariant chararacter bijections and the inductive Alperin--McKay condition}

	\date{\today}
	\author{Julian Brough}
	\author{Lucas Ruhstorfer}
		\address{J.B.}
	\email{julian.m.a.brough@gmail.com}
	
	\address{L.R. School of Mathematics and Natural Sciences University of Wuppertal, Gaußstr. 20,
	42119 Wuppertal, Germany}
\email{ruhstorfer@uni-wuppertal.de}
	\keywords{Alperin--McKay conjecture, inductive conditions, isolated blocks}

	\subjclass[2010]{20C33}
	
	\begin{abstract}
		In this paper we consider the inductive Alperin--McKay condition for isolated blocks of groups of Lie type $B$ and $C$. This finishes the verification of the inductive condition for groups of this type.
	\end{abstract}
	
	\maketitle

	\maketitle

	In the representation theory of finite groups some of the most important open conjectures relate the representation theory of a finite group $G$ and those of its $\ell$-local subgroups, for $\ell$ a prime dividing the order of $G$.
	One of these conjectures is the Alperin--McKay conjecture, which forms a blockwise generalisation of the McKay conjecture.
	For an $\ell$-block $b$ of $G$ we denote by ${\rm Irr}_0(G,b)$ the subset of height zero characters of $\Irr(G,b)$, the characters of the block $b$.
	
	\begin{conj}[Alperin--McKay]
		Let $b$ be an $\ell$-block of $G$ with defect group $D$ and $B$ its Brauer correspondent in ${\rm N}_G(D)$. 
		Then $$|{\rm Irr}_0(G,b)|=|{\rm Irr}_0({\rm N}_G(D),B)|.$$
	\end{conj}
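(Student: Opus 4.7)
The Alperin--McKay conjecture is of course not expected to admit a direct proof: it is one of the major open problems of modular representation theory, and any attack has to proceed via reduction to simple groups. The plan therefore is to invoke Späth's reduction theorem, which asserts that if every non-abelian finite simple group satisfies the so-called \emph{inductive Alperin--McKay condition} for all of its $\ell$-blocks, then the conjecture holds for every finite group $G$ and every prime $\ell$. Thus the task becomes to verify this inductive condition on the list of finite simple groups, running through the classification.

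For the alternating, sporadic and exceptional cases one appeals to the already existing literature. The substantial remaining task is groups of Lie type, and here one reduces further by means of Bonnafé--Dat--Rouquier's Morita equivalence together with $e$-Harish-Chandra theory: a non-quasi-isolated $\ell$-block is Morita equivalent (equivariantly, with control of defect groups and local structure) to a block of a proper Levi subgroup, so by an inductive argument on the semisimple rank it suffices to treat quasi-isolated, and ultimately isolated, blocks. For types $A$, $D$ and the very small-rank anomalies this has been handled in prior work; what remains, and what this paper addresses, are the isolated blocks of simply connected groups of type $B$ and $C$.

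Concretely, for an isolated $\ell$-block $b$ of $\G^F$ with $\G$ of type $B_n$ or $C_n$, the strategy is: first, give an explicit description of $\Irr_0(\G^F,b)$ using Lusztig's Jordan decomposition together with the parametrisation of characters in isolated series; second, identify the defect group $D$ and compute $\N_{\G^F}(D)$, describing the Brauer correspondent $B$ in terms of characters of a suitable $e$-local subgroup (a normaliser of an $e$-split Levi of the appropriate type); third, construct a bijection $\Irr_0(\G^F,b)\to\Irr_0(\N_{\G^F}(D),B)$ which is equivariant under the action of $\Aut(\G^F)_b$ (diagonal, field, and in small cases graph automorphisms) and which is compatible with central characters; and fourth, lift this bijection to the level of character triples so that the strong cohomological equivalence $(\G^F\rtimes A,\chi)\sim_b(\N_{\G^F}(D)\rtimes A,\chi')$ required by the inductive condition is satisfied.

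The main obstacle, as always in this programme, is the last step: mere equivariance of a McKay-type bijection is insufficient, and one must track $2$-cocycle data on the stabiliser, which is particularly delicate in types $B$ and $C$ because the prime $\ell=2$ is the defining prime for half the interesting cases, and because the centre of the simply connected group intervenes nontrivially in the diagonal automorphism action. I would therefore expect to spend most of the technical effort on constructing a well-chosen extension of each height-zero character to its inertia subgroup inside $\G^F\rtimes A$, exploiting regular embeddings $\G\hookrightarrow\Gtilde$ and Gallagher-type arguments, and then matching these extensions on the local side by an analogous construction over $\N_{\G^F}(D)$. Once the cocycles agree, the inductive condition is verified and Späth's reduction delivers the conjecture for all groups whose composition factors lie in the range already covered.
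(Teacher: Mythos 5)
The statement you are asked about is the Alperin--McKay conjecture itself, which the paper states precisely \emph{as a conjecture}: neither this paper nor any other proves it, and so there is no ``paper's own proof'' to compare against. What the paper actually proves is that the inductive Alperin--McKay condition holds for quasi-simple groups of types $B_n$ and $C_n$ over $\mathbb{F}_q$ ($q$ odd) and primes $\ell\geq 5$, thereby contributing one piece to the programme that would eventually yield the conjecture via Sp\"{a}th's reduction theorem. Your proposal correctly recognises all of this and gives a broadly accurate sketch of the programme, so at the level of strategy you are on the right track.

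That said, several details are off. First, you assert that ``the prime $\ell=2$ is the defining prime for half the interesting cases'' in types $B$ and $C$ and that this creates delicacy; but the paper assumes $\ell\geq 5$ throughout (and $p$ odd), so $\ell=2$ is entirely out of scope here, and none of the cocycle delicacy you anticipate comes from $\ell=2$. Second, you say type $D$ has been ``handled in prior work'': only partial cases (e.g.\ $d=1$, via \cite{TypD}) are done, and the completion of type $D$ is not claimed by this paper either --- its main theorem covers $B_n$ and $C_n$ only. Third, and most substantively, you describe the local side as $\N_{\G^F}(D)$ with $D$ the defect group; in fact the paper works relative to the \emph{Cabanes subgroup} $Q=\Z(L)_\ell$ of the defect group (the unique maximal normal abelian subgroup, which exists for $\ell\geq 5$), using the flexibility built into the reduction theorem of \cite{Jordan3}. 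This choice is what makes $\N_G(Q)=\N_G(\Levi)$ and lets the Clifford-theoretic analysis happen over the $d$-split Levi $\Levi$ and its relative Weyl group, rather than over the defect group normaliser itself. The technical core of the paper --- verification of Assumption \ref{assumption}, i.e.\ existence of an $\hat{N}_\lambda$-equivariant extension map for $L\lhd N$ together with the multiplicity-one and $A'(\infty)$ conditions --- is precisely the ``extension-and-cocycle matching'' step you allude to, but it is formulated at the Levi normaliser, not at $\N_G(D)$, and carried out by explicit computations in Tits' extended Weyl group $\V$ using the structure of the isolated $d$-split Levi subgroups from Table \ref{table}. So your high-level picture is right, but the mechanism by which the local side is tamed is different from what you describe, and the restriction on $\ell$ is essential to making it work.
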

	
	In \cite{IAM}, the  Alperin--McKay conjecture was reduced to the verification of the so-called {\it inductive Alperin--McKay condition} (iAM) for all finite simple groups and primes $\ell$.
	In previous papers \cite{Jordan3} for simple groups of Lie type, the second author has proven that it suffices to verify the iAM-condition for quasi-isolated blocks of groups of Lie type and in many cases just for isolated blocks.
	Applying these techniques together with results from \cite{Brough}, yielded a verification of the iAM-condition for all finite simple groups of type $A$ and primes $\ell\geq 5$.
	
	The present paper is concerned with considering the iAM-condition for the remaining classical quasi-simple groups, i.e., for groups of Lie type $B$, $C$ and $D$ defined over a field of characteristic $p \neq \ell$.
	
	Due to the nature of the inductive proof using the iAM-condition, there is some flexibility in the choice of local subgroups that can be taken.
	When $\ell\geq 5$ the defect groups of $\ell$-blocks of these groups have a Cabanes subgroup, i.e. they have a unique maximal normal abelian subgroup.

	With this in mind the first half of this paper is focused on the following result, which provides a method to verify the iAM-condition.

	\begin{theo}\label{RedCabQIso}
		Assume that $\G$ is a simple algebraic group of simply connected type $B_n$ or $C_n$ ($n \geq 2$) with Frobenius endomorphism $F: \G \to\G$ and let $b$ be a quasi-isolated $\ell$-block of $G:=\G^F$ with $\ell \geq 5$.
		Assume that the block $b$ satisfies Assumption \ref{assumption}.
		Then the block $b$ is AM-good relative to the Cabanes subgroup of its defect group.
	\end{theo}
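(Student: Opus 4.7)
The plan is to unpack AM-goodness relative to the Cabanes subgroup $C$ of a defect group $D$ of $b$ as the existence of an $\Aut(G)_b$-equivariant bijection $\Omega\colon \Irr_0(G,b) \to \Irr_0(\N_G(C), B_C)$, compatible with central characters and refineable to an isomorphism of character triples on the relevant inertia subgroups inside the central extension of $G \rtimes \Aut(G)_b$ by the diagonal group. The advantage of working with $C$ rather than with $D$ is that $\N_G(C)$ carries the structure of the normalizer of an $e$-split Levi subgroup, which puts the local side directly within reach of Deligne--Lusztig theory.

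First, I would pass to a regular embedding $\G \hookrightarrow \Gtilde$ with connected center and invoke the Bonnaf\'e--Dat--Rouquier Morita equivalence for quasi-isolated blocks, which (after lifting to the $\Gtilde$-side) replaces $b$ by an isolated block of the $F$-fixed points of a proper Levi subgroup $\Ltilde$. Since this Morita equivalence preserves defect groups together with their Cabanes subgroups and is equivariant under the stabilizer of the block in the outer automorphism group, the AM-datum descends from the isolated-block situation in $\Ltilde^F$ to the quasi-isolated block $b$.

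Second, I would make the local side explicit. For $\ell \geq 5$ good in types $B$ and $C$, the defect groups of isolated $\ell$-blocks are controlled by Brou\'e--Malle--Michel Sylow $e$-theory, and the Cabanes subgroup $C$ identifies with the Sylow $\ell$-subgroup of the $F$-fixed points of a minimal $e$-split Levi $\Levi$ with $\ell \mid |\Z(\Levi)^F|$. Up to $\ell'$-factors one has $\N_G(C) = \N_G(\Levi)^F$, and the height-zero characters of $b$ are parametrized by the $e$-Harish-Chandra series above cuspidal pairs $(\Levi,\lambda)$, while the height-zero characters of $B_C$ are obtained by Clifford theory from $\lambda$ inside $\N_G(\Levi)/\Levi$. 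Assumption~\ref{assumption} is precisely what provides the equivariant extendibility of the cuspidal characters $\lambda$ and of the attached Levi-level bijection, making the two parametrizations match.

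The main obstacle will be to promote the resulting set-theoretic bijection $\Omega$ to a genuine character-triple isomorphism, which is what the iAM-condition requires. For each pair $(\chi, \Omega(\chi))$ one has to exhibit compatible projective representations of the two inertia subgroups that share a common $2$-cocycle, and this must be done coherently across the whole bijection. I would combine the general extension machinery from \cite{Brough} and \cite{Jordan3} with the concrete description of relative Weyl groups of $e$-split Levis in type $B$ and $C$, feeding in the content of Assumption~\ref{assumption} at the Levi level, to transport the character triple up to $G$ and compare it with its local counterpart in $\N_G(C)$. The verification that these two transports match on the nose is the technical heart of the argument.
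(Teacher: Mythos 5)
The overall skeleton you describe --- pass to a regular embedding, parametrize $\Irr_0$ on both sides via $d$-Harish-Chandra theory and Clifford theory over the relative Weyl group, and then check a criterion ensuring the resulting bijection upgrades to a character-triple isomorphism --- does match the paper's strategy (Theorem~\ref{bijection}). But your first step introduces a detour that the paper does not take and that I do not think goes through as stated, and you are missing the key mechanism that resolves what you correctly identify as ``the technical heart.''

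\textbf{The BDR step is not appropriate here.} After lifting $b$ to a block $\tilde b$ of $\Gtilde^F$ labelled by $\tilde s$, the Bonnaf\'e--Dat--Rouquier equivalence is nontrivial only if $\C_{\Gtilde^*}(\tilde s)$ lies in a proper Levi of $\Gtilde^*$; since $s$ quasi-isolated in $G^*$ means precisely that $\C^\circ_{G^*}(s)$ (equivalently the image of $\C_{\Gtilde^*}(\tilde s)$) is \emph{not} in a proper Levi whenever $s$ is isolated, the equivalence does nothing for the isolated case, which is the core case. Even in the quasi-isolated-but-not-isolated case, where BDR does reduce, Theorem~\ref{RedCabQIso} is a \emph{criterion} in terms of Assumption~\ref{assumption} about a specific $d$-cuspidal pair of $G$, and you would need to show that this assumption transports along the Morita equivalence and through the descent from $\Gtilde$ to $G$ --- none of which is established. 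The paper instead proves the bijection directly at the level of $\Gtilde^F$ and $\tilde N$ and then descends. Separately, your identification of the Cabanes subgroup is off: by Corollary~\ref{cabanes} it is $\Z(L)_\ell$, the $\ell$-part of the \emph{centre} of the $d$-split Levi $\Levi^F$, not a Sylow $\ell$-subgroup of $\Levi^F$.

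\textbf{The character-triple step needs the $A'(\infty)$ conditions.} In the paper, the bijection $\tilde f : \Irr_0(\tilde G, b) \to \Irr_0(\tilde N, B)$ comes from composing the Clifford-theoretic bijection $\Irr(W_{\tilde G}(\tilde\Levi,\tilde\theta)) \to \Irr(\tilde N \mid \tilde\theta)$ (built from the extension map guaranteed by Assumption~\ref{assumption}(i), see the discussion around Lemma~\ref{max ext}) with the $d$-Harish-Chandra bijection of Proposition~\ref{dHC}; both are $(\N_{\tilde G\mathcal B}(Q)\ltimes \Irr(\tilde G/G))_b$-equivariant. To descend this from $\tilde G$ and $\tilde N$ to $G$ and $N$ while preserving the iAM-structure, the paper invokes the condition $A'(\infty)$ of Definition~\ref{def star}: Proposition~\ref{star} (using Assumption~\ref{assumption}(ii)) shows it holds for every character of $B$, and \cite{TypD} provides it for every character of $b$. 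With these in hand, the criterion of \cite[Theorem 2.4]{Brough} produces the required character-triple isomorphism. You gesture at ``general extension machinery'' and the ``technical heart,'' but without the $A'(\infty)$ condition and the specific criterion from \cite{Brough} there is no mechanism to actually upgrade the counting bijection, and this is where the real content of Assumption~\ref{assumption} is spent.
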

	

	The block $b$ of $G$ is labeled by a pair $(\Levi,\lambda)$ consisting of a $d$-split Levi subgroup $\Levi$ of $(\G,F)$ and a $d$-cuspidal character $\lambda\in {\rm Irr}(\Levi^F)$, where $d$ is the order of $q$ modulo $\ell$, see \ref{parameter}.
	Assumption \ref{assumption} concerns the Clifford theory associated to $\Levi^F \lhd \N_G(\Levi)$ taking into account the action arising from the automorphisms of $G$ stabilizing $\Levi$.
	
	Assumption \ref{assumption} was verified in type $C$ with respect to any $d$-cuspidal pair \cite{BroughC} and for blocks of maximal defect in type $B$ \cite{TypeB}. In types $B$ and $C$, by \cite[Theorem D]{Jordan3} it suffices to validate the iAM-condition for all isolated $\ell$-blocks.
	The second half of this paper therefore focuses on verifying this assumption also for the isolated $\ell$-blocks in type $B_n$. In contrast to the computation in \cite{BroughC} we make explicit use of the structure of the possible Levi subgroups that can occur for isolated blocks. This for one reduces the computational effort of verifying Assumption \ref{assumption} and is helpful in overcoming technical difficulties related to the more complicated structure of the extended Weyl group (see e.g. \cite[Remark 3.4]{BroughC}). By verifying Assumption \ref{assumption} we can therefore finish the verification of the iAM-condition for the considered groups.
	
	\begin{theo}
		Let $G$ be a quasi-simple group of Lie type $B_n$ or $C_n$, with $n\geq 2$, defined over the finite field $\mathbb{F}_q$ for $q$ a prime power of an odd prime and let $\ell \geq 5$ not dividing $q$.
		Then every $\ell$-block of $G$ satisfies the iAM-condition.
	\end{theo}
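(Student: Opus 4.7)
The plan is to combine the reduction theorems already cited in the introduction with the explicit verification of Assumption \ref{assumption} carried out in (or referenced from) the present paper, so that very little additional work remains beyond bookkeeping. The starting point is the reduction machinery of \cite{Jordan3}: to establish the iAM-condition for every $\ell$-block of the quasi-simple groups of type $B_n$ or $C_n$, it suffices to verify it for quasi-isolated $\ell$-blocks, and under the hypothesis $\ell\geq 5$ the statement \cite[Theorem D]{Jordan3} further restricts this to isolated $\ell$-blocks. So the entire argument collapses to a block-by-block check of the iAM-condition for isolated $\ell$-blocks of $G$.

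For such an isolated block $b$, Theorem \ref{RedCabQIso} asserts that if Assumption \ref{assumption} holds for $b$, then $b$ is AM-good relative to the Cabanes subgroup of its defect group; in particular $b$ satisfies the iAM-condition. Thus the theorem will follow once Assumption \ref{assumption} is known in each remaining case.

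The verification of Assumption \ref{assumption} splits along the Dynkin type. For type $C_n$, the assumption is established in \cite{BroughC} with respect to an arbitrary $d$-cuspidal pair $(\Levi,\lambda)$, and this already covers every isolated $\ell$-block. For type $B_n$, blocks of maximal defect are handled in \cite{TypeB}; the remaining case---isolated $\ell$-blocks of non-maximal defect---is the technical core of the second half of the present paper. There one exploits the explicit structure of the $d$-split Levi subgroups $\Levi$ that can arise in a $d$-cuspidal datum $(\Levi,\lambda)$ of an isolated block in type $B$; this restriction cuts down the Clifford-theoretic analysis of $\Levi^F\lhd\N_G(\Levi)$ and sidesteps the extended-Weyl-group difficulties flagged in \cite[Remark 3.4]{BroughC}.

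The main obstacle is precisely this last step: verifying Assumption \ref{assumption} for all non-maximal-defect isolated blocks in type $B_n$. Once that is in place, the theorem is proved by simply assembling the pieces: reduce via \cite{Jordan3} to isolated blocks, apply Theorem \ref{RedCabQIso} to replace the iAM-condition by Assumption \ref{assumption}, and then invoke the case-by-case verification from \cite{BroughC}, \cite{TypeB}, and the present work to conclude in each remaining case.
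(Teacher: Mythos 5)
Your proposal follows essentially the same route as the paper's proof: reduce via the machinery of \cite{Jordan3} to isolated $\ell$-blocks (and implicitly to groups with non-exceptional Schur multiplier, a technical point the paper states explicitly but you omit), replace the iAM-condition by Assumption~\ref{assumption} via Theorem~\ref{RedCabQIso} (realized in the body as Theorem~\ref{bijection}), and then appeal to the case-by-case verifications.

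One small inaccuracy worth flagging: you describe the remaining type $B_n$ work as the ``non-maximal defect'' case, with ``maximal defect'' blocks already done in \cite{TypeB}. That matches the informal phrasing of the introduction, but the proof itself does not split along defect; it splits according to the three shapes of $\C^\circ_{\G^\ast}(s)$ and the corresponding $d$-split Levi subgroups listed in Table~\ref{table}. Cases~1 and~3 are handled by Remark~\ref{modifaction}, which shows that the arguments of \cite{TypeB} transfer verbatim to these Levi structures (not only to maximal-defect blocks), while the genuinely new content is Case~2, where $\C^\circ_{\G^\ast}(s)^F \cong C_{n/2}(q^2)$ with $d_0$ odd and the Levi has factors $A_1(q^{d_0})$. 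There Assumption~\ref{assumption}(i) comes from Corollary~\ref{ext2} and Assumption~\ref{assumption}(ii) from Remarks~\ref{rem}(i) and \ref{rmk}. So your high-level architecture is right, but the dichotomy you name is not the one the proof actually uses.
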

	
\subsection*{Structure of the paper.}
	In Section~\ref{Notation} we provide some of the basic notation that will be used throughout the paper.
	Sections \ref{GlobChar} and \ref{LocalChar} are dedicated to the parametrisation of the global, respectively local, height zero characters via the techniques developed by Enguehard together with $d$-Harish-Chandra theory.
	This will then be used in Section~\ref{discentre} to prove Theorem~\ref{RedCabQIso}.
The remaining sections are then dedicated to verifying Assumption \ref{assumption} in types $B_n$.

\subsection*{Acknowledgment}

The research of the first author is funded through the DFG (Project: BR 6142/1-1). The second author thanks Jay Taylor for helpful conversations. Moreover, he would like to thank the Isaac Newton Institute for Mathematical Sciences for
support and hospitality during the programme "Groups, representations and applications:
new perspectives" when work on this paper was undertaken. This work was supported by:
EPSRC grant number EP/R014604/1. This paper is a contribution via the second author to SFB TRR 195.

The authors thank Britta Späth and Gunter Malle for carefully reading a previous version of this paper.
\section{notation}\label{Notation}

	\subsection{Finite groups of Lie type}
	
	Throughout, ${\bf G}$ denotes a connected reductive group over an algebraic closure $\mathbb{F}$ of $\mathbb{F}_p$ for a prime number $p$.	Let $F:\G\rightarrow \G$ be a Frobenius endomorphism defining an $\mathbb{F}_q$-structure on $\G$ where $q$ is an integral power of $p$. We let $(\G^\ast,F)$ be a pair in duality with $(\G,F)$. If $\mathbf{H}$ is an $F$-stable subgroup of $\G$ (or of $\G^\ast$), then we denote $H:=\mathbf{H}^F$. 

	An $F$-stable torus $\T$ of $\bG$ is called an \emph{$e$-torus} if it splits
	completely over $\mathbb{F}_{q^e}$ but no non-trivial subtorus splits over any smaller field. In particular, $|\T^F|$ is a power of $\Phi_e(q)$, where $\Phi_e$ denotes the $e$th cyclotomic polynomial. For an $F$-stable torus $\T$ we denote by $\T_{\phi_e}$ its maximal $e$-split subtorus.
	The
	centralizers of $e$-tori of $\bG$ are called \emph{$e$-split Levi subgroups}.
	
	\subsection{Character theory}
	For any $F$-stable Levi subgroup $\bL$ of
	a (not necessarily $F$-stable) parabolic subgroup $\Para$ of $\bG$ Lusztig
	defines linear maps
	$$R_{\Levi \subset \Para}^\G:\mathbb{Z}\Irr(L)\longrightarrow\mathbb{Z}\Irr(G),$$
	$${}^\ast R_{\Levi \subset \Para}^\G:\mathbb{Z}\Irr(G)\longrightarrow\mathbb{Z}\Irr(L).$$
	By the Mackey formula (see \cite[Theorem 3.3.7]{GeckMalle}) these maps won't depend in our cases on the choice of parabolic subgroups so that we will write ${}^{(\ast)} R_\Levi^\G$ instead of ${}^{(\ast)}R_{\Levi \subset \Para}^\G$.
	
	A character $\chi\in\Irr(G)$ is called \emph{$e$-cuspidal} if
	${}^\ast R_{\Levi}^\G(\chi)=0$ for every $e$-split proper Levi subgroup $\bL$ of $\bG$. A pair $(\bL,\lambda)$ consisting of an $e$-split Levi subgroup $\bL$ of $\G$ and an
	$e$-cuspidal character $\lambda\in\Irr(L)$ is then called an
	\emph{$e$-cuspidal pair}.
	Given an $e$-cuspidal pair $(\bL,\lambda)$, we write
	$$\mathcal{E}(G,(\bL,\lambda)):=\{\chi\in\Irr(G)\mid
	\langle {}^\ast R_\Levi^{\G}(\chi),\lambda\rangle\ne0\}$$
	for the set of constituents of $R_\Levi^{\G}(\lambda)$. This is called the
	\emph{$e$-Harish-Chandra series of $G$ above $(\bL,\lambda)$}.
	
	For a semisimple element $s \in G^\ast$ we denote by $\mathcal{E}(G,s)$ the rational Lusztig series associated to it. For ${z} \in \Z(G^\ast)$ there exists a linear character $\hat{z} \in \Irr(G)$ and multiplication with $\hat{z}$ induces a bijection $\hat{z}: \mathcal{E}(G,1) \to \mathcal{E}(G,z)$, see \cite[Equation 8.19]{MarcBook}.
	
	Moreover, if the center $\Z(\G)$ is connected
	$$\psi_{G,s}: \mathcal{E}(\C_{G^\ast}(s),1) \to \mathcal{E}(G,s)$$
	will denote the unique Jordan decomposition from Digne--Michel \cite[Theorem 4.7.1]{GeckMalle}. In particular, we have a bijection $\psi_{G,1}:\mathcal{E}(G^\ast,1) \to \mathcal{E}(G,1)$ between the set of unipotent characters of $G$ and $G^\ast$.

\subsection{Blocks of groups of Lie type}\label{block lie}

We let $\ell \neq p$ be a fixed prime and denote by $\mathcal{E}(G,\ell')$ the union of Lusztig series $\mathcal{E}(G,s)$ with $s \in G^\ast$ a semisimple element of $\ell'$-order. For such a semisimple element $s$ of $\ell'$-order, we denote by $\C_{G^\ast}(s)_\ell$ the subset of $\ell$-power order elements of the centralizer of $s$. By a fundamental result of Broué--Michel \cite[Theorem 9.12]{MarcBook}, the set
$$\mathcal{E}_\ell(G,s)=\displaystyle\bigcup_{t \in \C_{G^\ast}(s)_\ell} \mathcal{E}(G,st)$$
is the set of characters associated to a sum of $\ell$-blocks. We denote by $\mathrm{Bl}(G,s)$ the corresponding set of blocks.


\section{Global characters}\label{GlobChar}

\subsection{Parametrizing height zero characters}\label{parameter}

Throughout this section, we let $\G$ be a connected reductive group with connected centre with all simple components of classical type. Suppose that $\ell$ is a prime with $\ell \nmid q$ and $\ell \geq 5$. We denote by $d$ the order of $q$ modulo $\ell$. According to \cite[Theorem 4.1]{Marc} to each $d$-cuspidal pair $(\Levi,\lambda)$ with $\lambda \in \mathcal{E}(L,\ell')$, up to $\G^F$-conjugation, there exists a unique $\ell$-block $b=b_{\G^F}(\Levi,\lambda)$ of $\G^F$ associated to it. Let $s \in L^\ast$ a semisimple element of $\ell'$-order such that $\lambda \in \mathcal{E}(L,s)$. Observe that since $(\Levi,\lambda)$ is $d$-cuspidal, we have $\Z^\circ(\C^\circ_{\Levi^\ast}(s))_{\Phi_d}=\Z^\circ(\Levi^\ast)_{\Phi_d}$, see \cite[Remark 2.2]{Cabanesgroup}. Since $\Levi^\ast$ is $d$-split we have $\Levi^\ast=\C_{\G^\ast}(\Z^\circ(\Levi^\ast)_{\Phi_d})$ and so $\C_{\Levi^\ast}(s)=\C_{\C_{\G^\ast}(s)}(\Z^\circ(\C^\circ_{\Levi^\ast}(s))_{\Phi_d})$ is a $d$-split Levi subgroup of $(\C_{\G^\ast}(s),F)$.

\begin{lemma}\label{lemma1}
	With the assumptions and notation from above,
%
we have
$$\Irr_0(G,b) \subseteq \displaystyle\bigcup_{t \in \Z(\C_{L^\ast}(s))_\ell} \mathcal{E}(G,st).$$
\end{lemma}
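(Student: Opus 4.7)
My approach is to combine the Brou\'e--Michel block partition with the Jordan-decomposition degree formula and the standard defect-group formula for $b$. Since $\lambda \in \mathcal{E}(L,s)$ forces $b \in \mathrm{Bl}(G,s)$, the partition recalled in Section~\ref{block lie} gives immediately
$$\Irr(G,b) \subseteq \bigcup_{t \in \C_{G^\ast}(s)_\ell} \mathcal{E}(G,st).$$
Hence any $\chi \in \Irr_0(G,b)$ already lies in $\mathcal{E}(G,st')$ for some $\ell$-element $t' \in \C_{G^\ast}(s)$, and the lemma reduces to showing that the height-zero hypothesis forces $t' \in \Z(\C_{L^\ast}(s))$ up to $G$-conjugacy.

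To extract this constraint I would exploit the Jordan decomposition, available since $\Z(\G)$ is connected: write $\chi = \psi_{G,st'}(\hat{\chi})$ for a unipotent character $\hat{\chi}$ of $\C_{G^\ast}(st')^F$. Taking $\ell$-parts in $\chi(1) = [G^\ast : \C_{G^\ast}(st')]_{p'}\hat{\chi}(1)$ and combining with the height-zero identity $\chi(1)_\ell = |G|_\ell/|D|_\ell$ yields the key equation
$$\hat{\chi}(1)_\ell \cdot |D|_\ell = |\C_{G^\ast}(st')^F|_\ell.$$
Under our hypotheses ($\ell \geq 5$, classical simple components, $\Z(\G)$ connected), the Cabanes--Enguehard defect formula identifies $|D|_\ell$ with $|\C_{L^\ast}(s)^F|_\ell$, so this rearranges to $\hat{\chi}(1)_\ell = [\C_{G^\ast}(st')^F : \C_{L^\ast}(s)^F]_\ell$, and in particular $|\C_{L^\ast}(s)|_\ell$ divides $|\C_{G^\ast}(st')|_\ell$.

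The final step is to convert this numerical identity into the desired structural statement on $t'$. Since $\hat{\chi}(1)_\ell$ is maximal among the $\ell$-parts of unipotent character degrees of $\C_{G^\ast}(st')^F$, the $d$-Harish-Chandra theory of Enguehard (and its extension by Cabanes--Enguehard and Kessar--Malle) places $\hat{\chi}$ in a series whose $d$-cuspidal Levi has $\Phi_d$-centre containing $\Z^\circ(\Levi^\ast)_{\Phi_d}$. This forces $st'$ to centralize $\Z^\circ(\Levi^\ast)_{\Phi_d}$, and hence $t' \in \C_{\G^\ast}(\Z^\circ(\Levi^\ast)_{\Phi_d}) = \Levi^\ast$. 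Combined with $t' \in \C_{G^\ast}(s)$ this already gives $t' \in \C_{L^\ast}(s)$, and iterating the same argument inside $\C_{L^\ast}(s)$ (which is itself a $d$-split Levi of $\C_{G^\ast}(s)$ by the observation preceding the lemma) upgrades this to $t' \in \Z(\C_{L^\ast}(s))$. The principal difficulty is this last structural step: translating the $\ell$-defect equation into a centralizer containment inside $\C_{L^\ast}(s)$, which requires an accurate comparison between the block-theoretic defect and the $d$-HC parametrisation of $\hat{\chi}$, and relies crucially on the hypothesis $\ell \geq 5$ (ensuring good primes and trivial $\ell$-part of relative Weyl groups) in classical type.
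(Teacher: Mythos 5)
Your proposal takes a genuinely different route from the paper. The paper's proof is short because it leans on two prior results: Enguehard's block-theoretic Jordan decomposition (\cite[Theorem~1.6]{EnguehardJordandecomp}) reduces the problem to a unipotent block $c$ of $\G(s)^F$, where $\G(s)$ is dual to $\C_{\G^\ast}(s)$, and the corresponding statement for unipotent blocks is then imported wholesale from \cite[Theorem~5.2]{Brough}; the key point is that this bijection is \emph{height preserving} and \emph{Lusztig-series preserving}, so the constraint on $t$ transports directly. You instead work with the ordinary Broué--Michel partition and try to extract the constraint on $t'$ by comparing the Jordan-decomposition degree formula against the block defect. This is a legitimate alternative strategy (it is roughly what the unipotent result being cited does internally), but it amounts to re-proving that theorem in the harder non-unipotent setting rather than reducing to it, and your sketch does not actually get there.

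Two concrete problems. First, the defect formula you invoke is wrong: the defect group does \emph{not} have order $|\C_{L^\ast}(s)^F|_\ell$. The correct formula (cited later in the paper as \cite[Lemma~4.16]{Marc}) is
\[
|D| \;=\; |W_{\C_{G^\ast}(s)}(\C_{\Levi^\ast}(s))|_\ell \cdot |\Z(\C_{L^\ast}(s))|_\ell \;=\; \frac{|\C_{G^\ast}(s)^F|_\ell}{\lambda_s(1)_\ell},
\]
where $\lambda_s$ has central $\ell$-defect, so $\lambda_s(1)_\ell = |\C_{L^\ast}(s):\Z(\C_{L^\ast}(s))|_\ell$. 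Your formula would require $|\C_{G^\ast}(s):\C_{L^\ast}(s)|_\ell = |\C_{L^\ast}(s):\Z(\C_{L^\ast}(s))|_\ell$, which already fails for blocks of central defect (LHS $=1$, RHS nontrivial in general). Second, the last paragraph is where the real content sits and it is not a proof: the claim that $\hat\chi(1)_\ell$ is \emph{maximal} among $\ell$-parts of unipotent degrees of $\C_{G^\ast}(st')^F$ is unjustified and appears to be stated backwards (height zero makes $\chi(1)_\ell$ \emph{minimal} in the block, not maximal), the passage from this numerical statement to a containment of $\Phi_d$-centres is asserted without an argument, and ``iterating the same argument inside $\C_{L^\ast}(s)$'' is not a well-defined operation since $b$ is a block of $G$ and there is no obvious block of $\C_{L^\ast}(s)$ to run the same estimate on. As written the argument establishes only the trivial containment $\Irr(G,b)\subseteq\bigcup_{t\in\C_{G^\ast}(s)_\ell}\mathcal{E}(G,st)$; the refinement from $\C_{G^\ast}(s)_\ell$ to $\Z(\C_{L^\ast}(s))_\ell$, which is the entire point of the lemma, remains open.
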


\begin{proof}
	Let $\G(s)$ be a connected reductive group in duality with $\C_{\G^\ast}(s)$ (this group is connected by \cite[Proposition 13.16]{MarcBook} as $\Z(\G)$ is connected by assumption). According to \cite[Theorem 1.6]{EnguehardJordandecomp}, there exists a  bijection
	$$\mathcal{B}_{G,s}: \mathrm{Bl}(\G(s)^F,1) \to \mathrm{Bl}(\G^F,s)$$
	such that for $t \in  \mathrm{C}_{G^\ast}(s)_\ell$ we have
	$$\psi_{G,st} \circ \psi_{G(s),t}^{-1}( \Irr(G(s),c) \cap \mathcal{E}(G(s),t))= \Irr(G,b) \cap \mathcal{E}(G,st),$$
where $\mathcal{B}_{G,s}(c)=b$.
By \cite[Theorem 1.6]{EnguehardJordandecomp} this bijection yields a height preserving bijection $\Irr(G(s),c) \to \Irr(G,b)$.
The block $c$ is determined from $b$ by \cite[Theorem 1.6(B.1.a)]{EnguehardJordandecomp} as follows: By Jordan decomposition $\lambda$ corresponds to a unipotent character $\lambda_s \in \mathcal{E}(\C_{L^\ast}(s),1)$. Therefore, there is a Levi subgroup $\Levi(s)$ of $\G(s)$ in duality with the $d$-split Levi subgroup $\C_{\Levi^\ast}(s)$ of $\C_{\G^\ast}(s)$. We let $\lambda(s):= \psi_{L(s),1}(\lambda_s)$. Then $c$ is the block associated to the unipotent $d$-cuspidal pair $(L(s),\lambda(s))$ of $G(s)$.

By \cite[Theorem 5.2]{Brough} we therefore deduce that 
$$\Irr_0(G(s),c) \subseteq \displaystyle\bigcup_{t \in \Z(L(s)^\ast)_\ell} \mathcal{E}(G(s),st).$$
By definition $L(s)^\ast=\C_{L^\ast}(s)$, so $t \in \Z(\C_{L^\ast}(s))$. Using that the height preserving bijection $\Irr(G(s),c) \to \Irr(G,b)$ preserves Lusztig series yields the claim. 
\end{proof}

\begin{lemma}\label{ht zero}
Keep the notation and assumptions of Lemma \ref{lemma1} and let $\chi \in \mathcal{E}(G,st) \cap \Irr_0(G,b)$ be a height zero character. Then $\chi$ is a constituent of $R_{\Levi}^{\G}(\theta)$, where $\theta$ is the $d$-cuspidal character defined by $\theta:=\psi_{L,st} (\psi_{L,s}^{-1}(\lambda)) \in \mathcal{E}(L,st)$. Note that by Lemma \ref{lemma1} the character $\theta$ is well-defined as $\mathrm{C}_{L^\ast}(st)=\mathrm{C}_{L^\ast}(s)$.
\end{lemma}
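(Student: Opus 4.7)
The plan is to transfer the question to the unipotent block $c$ of $G(s)$ via Enguehard's bijection recalled in the proof of Lemma~\ref{lemma1}, apply the unipotent analogue \cite[Theorem~5.2]{Brough}, and then translate the conclusion back to $G$ using the compatibility of Jordan decomposition with Lusztig induction. First I would set $\chi' := \psi_{G(s),t}\circ \psi_{G,st}^{-1}(\chi)$; by the height-preserving bijection from Enguehard recalled in the proof of Lemma~\ref{lemma1}, this is a height zero character in $\Irr(G(s),c) \cap \mathcal{E}(G(s),t)$, where $c$ is the unipotent block of $G(s)$ attached to the $d$-cuspidal unipotent pair $(L(s),\lambda(s))$.

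Next I would apply the unipotent version of the statement to $\chi'$. Inspecting the proof of \cite[Theorem~5.2]{Brough} yields the stronger assertion that every height zero character in $\Irr_0(G(s),c) \cap \mathcal{E}(G(s),t)$ is a constituent of $R_{L(s)}^{G(s)}(\hat{t}\cdot \lambda(s))$, where $\hat{t}\in \Irr(L(s))$ is the linear character associated with the central $\ell$-element $t\in \Z(L(s)^\ast)_\ell$. Since $t$ is central in $L(s)^\ast$, Jordan decomposition $\psi_{L(s),t}$ is just multiplication by $\hat{t}$, so $\hat{t}\cdot \lambda(s) = \psi_{L(s),t}(\lambda_s)$ with $\lambda_s := \psi_{L,s}^{-1}(\lambda) = \psi_{L(s),1}^{-1}(\lambda(s))$ viewed in $\mathcal{E}(\C_{L^\ast}(st),1) = \mathcal{E}(\C_{L^\ast}(s),1)$; by definition of $\theta$ one has $\psi_{L,st}(\lambda_s) = \theta$, so $\theta$ and $\hat{t}\cdot \lambda(s)$ correspond via Jordan decomposition to the same unipotent character $\lambda_s$ on the dual side.

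Finally, I would invoke the compatibility of Jordan decomposition with Lusztig induction: the bijection $\psi_{G,st}\circ \psi_{G(s),t}^{-1}$ intertwines (up to a global sign) $R_{L(s)}^{G(s)}$ with $R_{\Levi}^{\G}$ through the analogous bijection $\psi_{L,st}\circ \psi_{L(s),t}^{-1}$ on the Levi side. This is the Digne--Michel commutation of Jordan decomposition with Lusztig induction in the connected-centre setting, applied through the two-step Jordan decomposition built into Enguehard's bijection. Combining this with the previous paragraph shows that $\chi$ is a constituent of $R_\Levi^\G(\theta)$. The main obstacle will be to track this intertwining carefully across \emph{both} steps of Enguehard's bijection: one must confirm that the Levi pair $(L(s),\hat{t}\cdot\lambda(s))$ on the $\G(s)$-side corresponds precisely to $(\Levi,\theta)$ on the $\G$-side and that the Digne--Michel sign ambiguities cancel, so that the property of being a constituent is genuinely transported from $R_{L(s)}^{G(s)}(\hat{t}\cdot\lambda(s))$ to $R_\Levi^\G(\theta)$.
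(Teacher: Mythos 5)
Your strategy — push the statement to the unipotent side via the Enguehard--Jordan bijection, apply a unipotent analogue there, and transport the conclusion back — relies at the final step on an intertwining of the form
$$\psi_{G,st}\circ\psi_{G(s),t}^{-1}\circ R_{L(s)}^{G(s)}\;=\;R_\Levi^\G\circ\psi_{L,st}\circ\psi_{L(s),t}^{-1},$$
which would require the Digne--Michel Jordan decomposition $\psi_{G(s),t}$ to commute with Lusztig induction $R_{L(s)}^{G(s)}$. That commutation is not available here. The property in the characterisation of the unique Jordan decomposition (\cite[Theorem~4.7.1]{GeckMalle}) that relates it to Lusztig induction along a Levi $\mathbf{M}$ applies only in the \emph{aligned} case $\C_{\G^\ast}(s)\subseteq \mathbf{M}^\ast$. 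In your setting one has the opposite inclusion: $t$ is central in $L(s)^\ast=\C_{L^\ast}(s)$, hence $L(s)^\ast\subseteq\C_{G(s)^\ast}(t)$, and the two are not equal in general. So $\C_{G(s)^\ast}(t)\not\subseteq L(s)^\ast$, the aligned hypothesis fails, and the commutation you need is exactly the kind of statement that is open (and which Enguehard's block-level analysis is designed to circumvent). You also assert that the proof of \cite[Theorem 5.2]{Brough} already gives the stronger ``constituent of $R_{L(s)}^{G(s)}(\hat{t}\lambda(s))$'' statement; that should at least be justified rather than invoked by inspection.

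The paper avoids the unavailable commutation by never applying Jordan decomposition across $R_\Levi^\G$ directly. Instead it introduces the Levi subgroup $\K$ of $\G$ in duality with $\C_{\G^\ast}(t)$ (which is a genuine Levi since $\ell\ge 5$) and uses Enguehard's \cite[Proposition~2.2.4]{EnguehardJordandecomp} to identify $\Irr(b)\cap\mathcal{E}(G,st)$ with the $\mathcal{E}(K,s)$-part of a block of $K$ via $R_\K^\G\hat{t}$. It then invokes $d$-Harish-Chandra theory inside $\K$ (\cite[Proposition~5.2]{Marc}) and finally uses the Jordan-decomposition compatibility with Lusztig induction only inside $\Levi$, between $\Levi_\K$ and $\Levi$, where the aligned condition does hold: $\C_{\Levi^\ast}(st)=\C_{\Levi^\ast}(s)\subseteq\C_{\Levi^\ast}(t)=\Levi^\ast_\K$. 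That is the case of \cite[Theorem~4.7.1]{GeckMalle} that is actually available. To repair your argument you would either have to prove the missing commutation in this particular situation, or restructure the argument along the paper's lines by passing through the Levi $\K$.
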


\begin{proof}
Let $\mathbf{K}$ be a Levi subgroup of $\G$ in duality with the Levi subgroup $\C_{\G^\ast}(t)$ of $\G^\ast$. Note that the latter is indeed a Levi subgroup of $\G^\ast$ since $\ell \geq 5$ (see \cite[Theorem 13.16]{MarcBook}).
 By \cite[Proposition 2.2.4]{EnguehardJordandecomp} there exists a $d$-cuspidal pair $(\Levi_\K,\lambda_\K)$ of $\K$ such that the map $R_{\mathbf{K}}^{\G} \hat{t}$ yields a bijection
$$\Irr(b_{K}(\Levi_\K,\lambda_\K)) \cap \mathcal{E}(K,s) \to \Irr(b_{G}(\Levi,\lambda)) \cap \mathcal{E}(G,st).$$
Here, the $d$-cuspidal pair $(\Levi_\K,\lambda_\K)$ of $\K$ is constructed as follows: Let $(\C_{\Levi^\ast}(s),\lambda_s)$ be the unipotent $d$-cuspidal pair of $\C_{\G^\ast}(s)$ corresponding under Jordan decomposition to the $d$-cuspidal pair $(\Levi,\lambda)$ of $G$. By \cite[Definition and Proposition 1.4.7]{EnguehardJordandecomp} there exists a unipotent $d$-cuspidal pair $(\C_{\Levi^\ast}(s)_{\C_{\K^\ast}(s)},(\lambda_s)_{\C_{\K^\ast}(s)})$ of $\C_{\K^\ast}(s)$ such that $[\C_{\Levi^\ast}(s),\C_{\Levi^\ast}(s)]=[\C_{\Levi^\ast}(s)_{\C_{\K^\ast}(s)},\C_{\Levi^\ast}(s)_{\C_{\K^\ast}(s)}]^g$ for some $g \in \C_{G^\ast}(s)$, and the characters $\lambda_s$ and $(\lambda_s)_{\C_{\K^\ast}(s)}^g$ both restrict to the same character of $[\C_{\Levi^\ast}(s),\C_{\Levi^\ast}(s)]^F$. Then $(\Levi_{\K},\lambda_\K)$ is defined as the $d$-cuspidal pair of $\K$ with $\lambda_K \in \mathcal{E}(\Levi_{\K}^F,s)$ corresponding to $(\C_{\Levi^\ast}(s)_{\C_{\K^\ast}(s)},(\lambda_s)_{\C_{\K^\ast}(s)})$ under Jordan decomposition.

Observe that by Lemma \ref{lemma1} we have $t \in \Z(\C_{L^\ast}(s))$ and so $\C_{\G^\ast}(t) \cap \C_{\Levi^\ast}(s)=\C_{\Levi^\ast}(s)$. We therefore have $\K(s)^\ast=\Levi(s)^\ast$ and so $(\Levi(s)_{\K(s)^\ast},(\lambda_s)_{\K(s)^\ast})=(\Levi(s)^\ast,\lambda_s)$. Thus, $\Levi_{\K}$ is the Levi subgroup of $\K$ in duality with the Levi subgroup $\C_{\Levi^\ast}(t)$ of $\C_{\G^\ast}(t)$ and $\lambda_\K=\psi_{K,s}(\lambda_s) \in \mathcal{E}(K,s)$. Moreover, $\Levi_{\K}$ is a Levi subgroup of $\Levi$.

Now the center of the Levi subgroup $\mathbf{K}$ is again connected by \cite[Proposition 13.12]{MarcBook}. By \cite[Proposition 5.2]{Marc} (and its proof) we find that any $\psi \in \Irr(b_{K}(\Levi_\K,\lambda_\K)) \cap \mathcal{E}(K,s)$ appears in the $d$-Harish-Chandra series of $(\Levi_\K,\lambda_\K)$. Hence, $\psi$ is a constituent of $R_{\Levi_\K}^\K(\lambda_\K)$ and thus $\chi:=R_\K^\G (\hat{t} \psi)$ is a constituent of $$R_\K^\G(\hat{t} R_{\Levi_\K}^\K(\lambda_\K))=R_{\Levi_\K}^\G(\lambda_\K \hat{t})=R_\Levi^\G(R_{\Levi_\K}^\Levi(\lambda_\K \hat{t})).$$ Now $ R^\Levi_{\Levi_\K} \circ \psi_{\Levi_\K,st}= \psi_{\Levi,st}$ by the properties of Jordan decomposition (see \cite[Theorem 4.7.1]{GeckMalle}). By what we said before, $st$ is central in $\C_{L^\ast}(s)$. Hence, $\chi$ is a constituent of $R_\Levi^\G(\theta)$ with $\theta:=R_{\Levi_\K}^\Levi(\lambda_\K \hat{t})=\psi_{L,st} \psi_{L,s}^{-1}(\lambda)$. Finally, observe that $\lambda_\K$ is a $d$-cuspidal character. Thus, $\lambda_\K \hat{t}$ is also $d$-cuspidal and hence by the proof of \cite[Proposition 4.1]{Cabanesgroup} the character $\theta$ is $d$-cuspidal as well.
\end{proof}

Note that the proof of the previous lemma shows in particular that all irreducible constituents of $R_{\Levi}^{\G}(\theta)$ lie in the block $b$. A similar description of the (height zero) characters of blocks of abelian defect using similar arguments was already obtained in \cite[Theorem 2.9]{MalleAlperin}.

\subsection{$d$-Harish-Chandra theory}
The previous lemma can now be used as a parametrization of the global height zero characters in terms of $d$-Harish-Chandra theory. Recall that if $[\G,\G]$ is simply connected, we have by \cite[Equation 8.19]{MarcBook} a bijection $$\Z(G^\ast) \to \Irr(\G^F/[\G,\G]^F), z \mapsto \hat{z}.$$


\begin{proposition}\label{dHC}
In addition to the assumptions from before we assume that $[\G,\G]$ is simply connected. Let $(\Levi,\theta)$, $\theta \in \mathcal{E}(L,st)$, be a $d$-cuspidal pair as in Lemma \ref{ht zero}. Let $\mathrm{Aut}_{\mathbb{F}}(\G^F)$ be defined as in \cite[2.4]{Spaeth}.
There exist an $(\Irr(\G^F/[\G,\G]^F) \rtimes \mathrm{Aut}_{\mathbb{F}}(\G^F))_{(\Levi,\theta)}$-equivariant bijection
$$\Irr(W_{G}(\Levi, \theta) ) \to \mathcal{E}(\G^F,(\Levi,\theta)) ,\eta \mapsto R_\Levi^\G(\theta)_\eta.$$
Here, $(\hat{z},\sigma) \in (\Irr(\G^F/[\G,\G]^F) \rtimes \mathrm{Aut}_{\mathbb{F}}(\G^F))_{(\Levi,\theta)}$ acts by conjugation with $\sigma$ on $\Irr(W_{G}(\Levi, \theta) )$.
In particular, we have $R_\Levi^\G(\theta)_\eta \in \Irr_0(G,b)$ if and only if the following conditions are satisfied:
\begin{enumerate}[label=(\roman*)]
	\item $W_G(\Levi,\theta)$ contains a Sylow $\ell$-subgroup of $W_G(\Levi,\lambda)$.
	\item $\eta \in \Irr(W_G(\Levi,\theta))$ is an $\ell'$-character.
\end{enumerate}

\end{proposition}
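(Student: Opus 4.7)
The plan is to combine the Broué--Malle--Michel $d$-Harish-Chandra parametrization of $\mathcal{E}(G,(\Levi,\theta))$ with the compatibility of Lusztig induction under automorphisms and tensoring by linear characters, and then extract the height zero criterion from a degree and defect calculation.

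\emph{Existence and equivariance of the bijection.} By Lemma~\ref{ht zero} the pair $(\Levi,\theta)$ is $d$-cuspidal. Since $\Z(\G)$ is connected, $\ell\geq 5$ is good for $\G$, and the simple components of $\G$ are classical, standard $d$-Harish-Chandra theory (as formulated in \cite{Marc}) furnishes a bijection $\Irr(W_G(\Levi,\theta)) \to \mathcal{E}(G,(\Levi,\theta))$, $\eta \mapsto R_\Levi^\G(\theta)_\eta$, where $R_\Levi^\G(\theta)_\eta$ is the irreducible constituent of $R_\Levi^\G(\theta)$ labelled by $\eta$ via the Howlett--Lehrer decomposition of the endomorphism algebra of $R_\Levi^\G(\theta)$. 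For $\sigma\in\Aut_{\mathbb{F}}(\G^F)$ stabilizing $(\Levi,\theta)$, the functoriality of Lusztig induction under automorphisms gives $\sigma(R_\Levi^\G(\theta)_\eta)=R_\Levi^\G(\theta)_{{}^\sigma\eta}$, where $\sigma$ acts on $W_G(\Levi,\theta)$ and on its irreducible characters by conjugation. For a linear character $\hat{z}\in\Irr(\G^F/[\G,\G]^F)$ with $\hat{z}|_{\Levi^F}\cdot\theta=\theta$, the projection formula $\hat{z}\cdot R_\Levi^\G(\mu)=R_\Levi^\G(\hat{z}|_{\Levi^F}\cdot\mu)$ forces $\hat{z}\cdot R_\Levi^\G(\theta)_\eta=R_\Levi^\G(\theta)_\eta$, so $\hat{z}$ acts trivially on the parameter $\eta$. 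Combining these facts yields the claimed equivariance under the full semidirect product stabilizer.

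\emph{Height zero criterion.} By Lemma~\ref{ht zero}, every height zero character of $b$ lies in a series $\mathcal{E}(G,(\Levi,\theta))$ of the form above, so it remains to decide which $\eta$ correspond to height zero constituents. Using the standard degree formula
$$R_\Levi^\G(\theta)_\eta(1) = \pm \frac{|\G^F|_{p'}}{|\Levi^F|_{p'}\cdot |W_G(\Levi,\theta)|}\cdot \eta(1)\cdot \theta(1),$$
together with the equality $\nu_\ell(\theta(1))=\nu_\ell(\lambda(1))$ (since Jordan decomposition preserves degrees when $\Z(\G)$ is connected, and $\C_{\Levi^\ast}(st)=\C_{\Levi^\ast}(s)$ by Lemma~\ref{ht zero}), one compares $\nu_\ell(R_\Levi^\G(\theta)_\eta(1))$ with the defect of $b$. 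The formula for the defect of $b=b_G(\Levi,\lambda)$ from \cite{Marc} expresses $\nu_\ell(|D|)$ as $\nu_\ell(|\G^F|_{p'}/|\Levi^F|_{p'}) + \nu_\ell(|W_G(\Levi,\lambda)|) - \nu_\ell(\lambda(1))$, so $R_\Levi^\G(\theta)_\eta$ has height zero if and only if $\nu_\ell(\eta(1))=0$ and $|W_G(\Levi,\theta)|_\ell = |W_G(\Levi,\lambda)|_\ell$, that is, conditions (ii) and (i).

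\emph{Main obstacle.} The existence of the bijection, the automorphism-equivariance, and the degree bookkeeping are essentially standard. The delicate part is tracking the interaction of the linear character action with the labelling: one must identify the stabilizer of $(\Levi,\theta)$ inside the semidirect product carefully, since the passage from $\lambda$ to $\theta=\psi_{L,st}\psi_{L,s}^{-1}(\lambda)$ typically shrinks the relative Weyl group from $W_G(\Levi,\lambda)$ to the proper subgroup $W_G(\Levi,\theta)$, and the action of $\Irr(\G^F/[\G,\G]^F)$ via restriction to $\Levi^F$ must be seen to be compatible with this change of stabilizer before the equivariance statement can be combined with Lemma~\ref{ht zero} to read off the height zero characters.
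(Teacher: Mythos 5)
The central gap is that there is no ``standard $d$-Harish-Chandra bijection'' $\Irr(W_G(\Levi,\theta))\to\mathcal{E}(G,(\Levi,\theta))$ coming from a Howlett--Lehrer decomposition of $\End(R_\Levi^\G(\theta))$ when $\theta$ is non-unipotent and $d>1$. Cabanes--Enguehard give the \emph{set} of constituents, but no canonical parametrization. The paper has to manufacture the bijection by passing through Jordan decomposition: it first uses the Cabanes--Sp\"ath equivariant bijection $I^{\C_{\G^\ast}(st)}_{\C_{\Levi^\ast}(st),\theta_{st}}$ for the \emph{unipotent} $d$-series of $\C_{\G^\ast}(st)$, identifies the relative Weyl groups via a duality isomorphism $\Phi$, and then transports everything along $\psi_{G,st}$. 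Without this detour your bijection simply does not exist as claimed, and the equivariance and degree bookkeeping downstream have nothing to hang on.

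The degree formula you write,
$$R_\Levi^\G(\theta)_\eta(1)=\pm\frac{|\G^F|_{p'}}{|\Levi^F|_{p'}\cdot|W_G(\Levi,\theta)|}\,\eta(1)\theta(1),$$
is not the degree formula for $d$-Harish-Chandra constituents (the correct formula from the unipotent theory involves relative generic degrees and, crucially for the $\ell$-part, the full relative Weyl group $W_{\C_{\G^\ast}(st)}(\C_{\Levi^\ast}(st))$ rather than the stabiliser $W_G(\Levi,\theta)$ of $\theta_{st}$). The paper establishes only the $\ell$-part identity
$$R_\Levi^\G(\theta)_\eta(1)_\ell=\frac{|G:L|_\ell}{|W_{\C_{\G^\ast}(st)}(\C_{\Levi^\ast}(st))|_\ell}\,\theta(1)_\ell\,\eta(1)_\ell,$$
obtained by combining the unipotent degree formula with $\theta(1)_\ell=|L:\C_{L^\ast}(st)|_\ell\,\theta_{st}(1)_\ell$. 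Your version has a different group in the denominator and would give the wrong height-zero criterion in cases where $W_G(\Levi,\theta)\subsetneq W_{\C_{\G^\ast}(st)}(\C_{\Levi^\ast}(st))$.

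Finally, the equivariance argument is too quick in two places. First, functoriality of $R_\Levi^\G$ under $\sigma$ only gives $\sigma(R_\Levi^\G(\theta))=R_\Levi^\G({}^\sigma\theta)$ as a virtual character; it does not by itself show that the labelling $\eta\mapsto R_\Levi^\G(\theta)_\eta$ is $\sigma$-equivariant. One needs the equivariance of the unipotent bijection (Cabanes--Sp\"ath) together with the equivariance of Jordan decomposition. Second, the projection formula shows $\hat{z}$ permutes the constituents of $R_\Levi^\G(\theta)$, but your claim that it therefore acts trivially on the label $\eta$ does not follow; in the paper this is deduced by coupling $\hat{z}$ with the dual morphism $\sigma^\ast$, replacing $\sigma^\ast$ by $l\sigma^\ast$ to arrange $z\sigma^\ast(st)=st$, and then invoking the compatibility of $\psi_{G,st}$ with central twisting. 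You identify the change of stabiliser from $W_G(\Levi,\lambda)$ to $W_G(\Levi,\theta)$ as ``the delicate part,'' and you are right, but you treat the rest as standard when in fact the construction of the bijection itself is where the real work lies.
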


\begin{proof}
	We follow the general ideas of \cite[Section 5.B]{Brough}.
Let $(\C_{\Levi^\ast}(st),\theta_{st})$ be the unipotent $d$-cuspidal pair in $\C_{\Levi^\ast}(st)$ associated to $(\Levi,\theta)$.
Then as observed in the proof of \cite[Proposition 5.4]{Brough} we have as in \cite[Theorem 3.4]{Spaeth} an $\mathrm{Aut}_{\mathbb{F}}(\C_{\G^\ast}(st))_{(\C_{\Levi^\ast}(st),\theta_{st})}$-equivariant bijection
$$I_{\C_{\Levi^\ast}(st),\theta_{st}}^{\C_{\G^\ast}(st)} : \Irr(W_{\C_{G^\ast}(s)}(\C_{\Levi^\ast}(st), \theta_{st}) ) \to \mathcal{E}(\C_{\G^\ast}(st), (\C_{\Levi^\ast}(st), \theta_{st})).$$
As in the proof of \cite[Lemma 5.3]{Brough} we obtain for $\eta \in \Irr(W_{\C_{G^\ast}(s)}(\C_{\Levi^\ast}(st), \theta_{st}) )$ the equality $$I_{\C_{\Levi^\ast}(st),\theta_{st}}^{\C_{\G^\ast}(st)} (\eta)(1)_\ell=|\C_{G^\ast}(st):\N_{\C_{G^\ast}(st)}(\C_{\Levi^\ast}(st))|_\ell \theta_{st}(1)_\ell \eta(1)_\ell.$$
By the uniqueness properties of Jordan decomposition (see also the proof of \cite[Theorem 2.9]{MalleAlperin}) duality induces a natural isomorphism
$$\Phi:W_{\C_{G^\ast}(st)}(\C_{\Levi^\ast}(st),\theta_{st}) \to W_{G}(\Levi,\theta).$$
In particular, we also obtain in this case a bijection
$$\Irr(W_{G}(\Levi, \theta) ) \to \mathcal{E}(G , (\Levi, \theta)), \, \eta \mapsto \psi_{G,st} \circ I_{\C_{\Levi^\ast}(st),\theta_{st}}^{\C_{\G^\ast}(st)} \circ (\eta \circ \Phi),$$
and as in the Harish-Chandra case we denote by $R_{\Levi}^\G(\theta)_\eta$ the character corresponding to $\eta \in \Irr(W_G(\Levi,\theta))$ under this bijection. As in \cite[Lemma 5.3]{Brough} the degree formula for the unipotent $d$-Harish-Chandra case yields that 
$$R_{\Levi}^{\G}(\theta)_\eta(1)_\ell=|G : \N _{C_{G^\ast}(st)}(\C_{\Levi^\ast}(st))|_\ell \theta_{st}(1)_\ell \eta(1)_\ell.$$
By Jordan decomposition, we have $\theta(1)_\ell=|L:\C_{L^\ast}(st)|_\ell \theta_{st}(1)_\ell$. Replacing this in the formula above yields
$$R_{\Levi}^{\G}(\theta)_\eta(1)_\ell=\frac{|G:L|_\ell}{|W_{\C_{\G^\ast}(st)}(\C_{\Levi^\ast}(st))|_\ell} \theta(1)_\ell \eta(1)_\ell.$$
As in the proof of \cite[Theorem 5.6]{Brough}, we deduce that the minimum over all such $(t,\eta)$ is obtained for $(1,1)$; hence the block $b$ has defect
$$  \frac{|\C_{G^\ast}(s)|_\ell}{\lambda_s(1)_\ell}.$$
It remains to show that the so-obtained bijection is $(\Irr(\G^F/[\G,\G]^F) \rtimes \mathrm{Aut}_{\mathbb{F}}(\G^F))_{(\Levi,\theta)}$-equivariant. For this, let $\sigma: \G \to \G$ be a bijective morphism commuting with the action of $F$. If $\sigma(\Levi)=\Levi$, then there exists a dual morphism $\sigma^\ast: \G^\ast \to \G^\ast$ commuting with $F^\ast$ and such that $\sigma|_{\Levi}$ and $\sigma^\ast|_{\Levi^\ast}$ are in duality.
If $\hat{z} \sigma$ stabilizes $(\Levi,\theta)$ for some $z \in \Z(G^\ast)$, then $z \sigma^\ast(st)$ is $L^\ast$-conjugate to $st$. Hence, we can assume (by possibly replacing $\sigma^\ast$ by $l \sigma^\ast$ for some suitable $l \in L^\ast$) that $z \sigma^\ast(st)=st$. By the equivariance of Jordan decomposition \cite[Theorem 3.1]{Spaeth} it follows that $\sigma^\ast$ stabilizes $(\C_{\Levi^\ast}(st),\theta_{st})$. Using this construction, it follows that $\Phi:W_{\C_{G^\ast}(st)}(\C_{\Levi^\ast}(st),\theta_{st}) \to W_{G}(\Levi,\theta)$ is $(\sigma,\sigma^\ast)$-equivariant. The equivariance of Jordan decomposition (see \cite[Theorem 3.1]{Spaeth}) and the equivariance properties of the parametrization of unipotent $d$-Harish-Chandra series recalled at the beginning of the proof show that the constructed bijection is $(\Irr(\G^F/[\G,\G]^F) \rtimes \mathrm{Aut}_{\mathbb{F}}(\G^F))_{(\Levi,\theta)}$-equivariant.
\end{proof}
%
%
%

\section{Local characters}\label{LocalChar}

From now on we assume that $\G$ is a simple, simply connected algebraic group of classical type $B_n$, $C_n$ or $D_n$. In particular, the center of $\G$ can be disconnected. Let $\iota: \G \hookrightarrow \widetilde{\G}$ be a regular embedding, i.e. a closed embedding of algebraic groups with $\Z(\widetilde{\bf G})$ connected and $\widetilde{\bf G}=\Z(\widetilde{\bf G}) {\bf G}$, see \cite[Section 15.1]{MarcBook}.
Assume $F:\widetilde{\G}\rightarrow \widetilde{\G}$ is a Frobenius endomorphism extending the one of $\G$, as in  \cite[Section 2]{MS}. 

%

%
Recall that $\Phi_e$ denotes the $e$th cyclotomic polynomial. Set $E_{q,\ell}:=\{ e : \ell \mid \Phi_e(q)\}$ and observe that $E_{q,\ell}=\{d \ell^i \mid i \geq 1\}$ by the remarks before \cite[Lemma 13.17]{MarcBook}, where $d$ is the order of $q$ modulo $\ell$. We have the following elementary lemma:

\begin{lemma}\label{cyclotomic}
 Let $k$ be a positve integer with $(k,\ell)=1$. Then $(q^k - 1)_\ell \leq \Phi_d(q)_\ell$ (resp. $(q^k + 1)_\ell \leq \Phi_d(q)_\ell$) with equality if and only if $d \mid k$ (resp. $d \mid 2k$ but $d \nmid k$). 
\end{lemma}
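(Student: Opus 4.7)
The plan is to reduce both inequalities to the basic identity $v_\ell(q^d-1)=v_\ell(\Phi_d(q))$ and then to control $v_\ell(q^k-1)$ by means of the lifting-the-exponent lemma, which applies cleanly because $\ell\geq 5$ is odd and $d\mid \ell-1$ (so $(d,\ell)=1$).

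First I would record the starting identity. Writing $q^d-1=\prod_{e\mid d}\Phi_e(q)$ and using that $d$ is the multiplicative order of $q$ modulo $\ell$, every factor $\Phi_e(q)$ with $e<d$ divides $q^e-1$, hence is coprime to $\ell$. Consequently
\[
v_\ell(q^d-1)=v_\ell(\Phi_d(q)),
\]
so all the statements of the lemma amount to comparing $v_\ell(q^k\pm 1)$ with $v_\ell(q^d-1)$.

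Next I would handle the ``$q^k-1$'' assertion. If $d\nmid k$, then by definition of $d$ we have $\ell\nmid q^k-1$, hence $(q^k-1)_\ell=1<\Phi_d(q)_\ell$. If $d\mid k$, write $k=dm$; since $(k,\ell)=1$ we also have $(m,\ell)=1$. By the lifting-the-exponent lemma (valid because $\ell$ is odd and $\ell\mid q^d-1$),
\[
v_\ell(q^k-1)=v_\ell((q^d)^m-1)=v_\ell(q^d-1)+v_\ell(m)=v_\ell(q^d-1)=v_\ell(\Phi_d(q)),
\]
which gives the claimed equality case.

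Finally I would deduce the ``$q^k+1$'' assertion from the previous one via the factorisation $q^{2k}-1=(q^k-1)(q^k+1)$, i.e.\
\[
v_\ell(q^k+1)=v_\ell(q^{2k}-1)-v_\ell(q^k-1).
\]
Since $(k,\ell)=1$ and $\ell$ is odd, also $(2k,\ell)=1$, so the already-proved case applies to both $k$ and $2k$. Three subcases arise. If $d\mid 2k$ but $d\nmid k$, the right-hand side equals $v_\ell(\Phi_d(q))-0=v_\ell(\Phi_d(q))$, giving equality. If $d\mid k$, both terms equal $v_\ell(\Phi_d(q))$, so $v_\ell(q^k+1)=0$. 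If $d\nmid 2k$, both terms vanish, so again $v_\ell(q^k+1)=0$. Thus $(q^k+1)_\ell\leq \Phi_d(q)_\ell$ with equality precisely when $d\mid 2k$ and $d\nmid k$, as required. No real obstacle is expected; the only care-point is verifying that $(m,\ell)=1$ in the lifting-the-exponent step, which is automatic from $(k,\ell)=1$.
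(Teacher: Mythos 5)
Your argument is correct, but it follows a genuinely different route from the paper. The paper factors $q^k-1=\prod_{e\mid k}\Phi_e(q)$ and $q^k+1=\prod_{e\mid 2k,\,e\nmid k}\Phi_e(q)$ directly, then reads off the $\ell$-part by invoking the characterization $E_{q,\ell}=\{d\ell^i\}$ of those $e$ with $\ell\mid\Phi_e(q)$: since $(k,\ell)=1$, the only $e$ dividing $k$ (resp.\ $2k$) that lies in $E_{q,\ell}$ is $e=d$, and this occurs precisely when $d\mid k$ (resp.\ $d\mid 2k$, $d\nmid k$). That gives the statement in one stroke. You instead apply the cyclotomic factorization only to establish $v_\ell(q^d-1)=v_\ell(\Phi_d(q))$, and then control $v_\ell(q^k-1)$ via lifting the exponent, noting $v_\ell(m)=0$ because $(k,\ell)=1$, and finally pass to $q^k+1$ through $q^{2k}-1=(q^k-1)(q^k+1)$. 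Both are sound; the paper's proof is shorter and more uniform (one tool, the characterization of $E_{q,\ell}$, applied twice), while yours is more self-contained in that it bypasses the characterization of $E_{q,\ell}$ beyond the trivial fact $\ell\mid\Phi_d(q)$, at the cost of invoking the lifting-the-exponent lemma and a separate reduction for the $q^k+1$ case. Either would serve the purposes of the paper.
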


\begin{proof}
	We write $q^{k}-1=\prod_{e \mid k} \Phi_e(q)$ resp. $q^{k}+1 = \prod_{e \mid 2k, e \nmid k} \Phi_e(q)$. The property follows by the above characterization of $E_{q,\ell}$.
\end{proof}

\begin{remark}\label{scalar descent}
	Recall \cite[Example 3.5.20]{GeckMalle}: If $F$ defines an $\mathbb{F}_q$-structure on $\G$, then $\G^{F^d}$ inherits an $\mathbb{F}_q$-structure but can also be considered as group over $\mathbb{F}_{q^d}$. Under this identification, the $d$-split Levi subgroups of $(\G,F)$ correspond to the $1$-split Levi subgroups of $(\G,F^d)$.
	
	We claim that similarly for a positive integer $k$, the $d$-split Levi subgroups of $(\G,F)$ correspond to the $d/(d,k)$-split Levi subgroups of $(\G,F^k)$. For this it suffices to observe that if $(\T,F)$ is a $d$-split torus, then $(\T,F^k)$ is a $d/(d,k)$-split torus. Indeed, $(\T,F^k)$ splits completely over $\mathbb{F}_{q^{\mathrm{lcm}(d,k)}}$ and if any non-trivial subtorus splits over $\mathbb{F}_{q^{ki}}$, then it also splits over $\mathbb{F}_{q^{(d,ki)}}$ which forces $d/(d,k) \mid i$. 
\end{remark}

Recall that $d$ is the order of $q$ modulo $\ell$. Additionally, we set $d_0:=d$ if $d$ is odd and $d_0:=d/2$ if $d$ is even.

\begin{lemma}\label{isolated}
	Assume that $s$ is quasi-isolated in $\G^\ast$ and $(\Levi,\lambda)$ a $d$-cuspidal pair of $G$ with $\lambda \in \mathcal{E}(L,s)$ as before. Then $\Z(\C_{L^\ast}(s))_\ell = \Z(L^\ast)_\ell$.
\end{lemma}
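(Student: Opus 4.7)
My plan is to establish the nontrivial inclusion $\Z(\C_{L^\ast}(s))_\ell \subseteq \Z(L^\ast)_\ell$; the reverse is immediate from $\Z(L^\ast) \subseteq \Z(\C_{L^\ast}(s))$. I will pick an arbitrary $t \in \Z(\C_{L^\ast}(s))_\ell$ and progressively show it must centralise all of $L^\ast$. First, since $\ell \geq 5$ is good for classical groups and coprime to the orders of the finite groups $\C_{\Levi^\ast}(s)/\C^\circ_{\Levi^\ast}(s)$ and $\Z(\C^\circ_{\Levi^\ast}(s))/\Z^\circ(\C^\circ_{\Levi^\ast}(s))$, the $\ell$-element $t$ automatically lies in $\Z^\circ(\C^\circ_{\Levi^\ast}(s))^F$. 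Here I also exploit that $\C^\circ_{\Levi^\ast}(s)^F$ is Zariski dense in the connected group $\C^\circ_{\Levi^\ast}(s)$ (Steinberg), to transfer the centralising property of $t$ from the finite group to the algebraic group.

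Next I will consider the algebraic Levi $\M^\ast := \C_{\Levi^\ast}(t)$, which is connected and a genuine Levi subgroup of $\Levi^\ast$ by \cite[Theorem 13.16]{MarcBook} applied to the $\ell$-element $t$. Since $t$ commutes with $\C^\circ_{\Levi^\ast}(s)$, the inclusion $\C^\circ_{\Levi^\ast}(s) \subseteq \M^\ast$ produces a chain of central tori
\[ \Z^\circ(\Levi^\ast) \subseteq \Z^\circ(\M^\ast) \subseteq \Z^\circ(\C^\circ_{\Levi^\ast}(s)).\]
Applying the $d$-cuspidality of $(\Levi,\lambda)$, namely $\Z^\circ(\C^\circ_{\Levi^\ast}(s))_{\Phi_d} = \Z^\circ(\Levi^\ast)_{\Phi_d}$, I sandwich the $\Phi_d$-parts and obtain $\Z^\circ(\M^\ast)_{\Phi_d} = \Z^\circ(\Levi^\ast)_{\Phi_d}$.

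The final step---and the main obstacle---will be promoting this $\Phi_d$-equality to the full equality $\M^\ast = \Levi^\ast$, since then $t \in \Z(\Levi^\ast)^F \cap L^\ast = \Z(L^\ast)$ as desired. Here the quasi-isolatedness of $s$ in $\G^\ast$ becomes essential: $\G$ being simply connected makes $\G^\ast$ adjoint, so $\Z^\circ(\G^\ast) = 1$ and quasi-isolatedness forces $\Z^\circ(\C^\circ_{\G^\ast}(s)) = 1$, i.e.\ $\C^\circ_{\G^\ast}(s)$ is semisimple. The quotient torus $\Z^\circ(\M^\ast)/\Z^\circ(\Levi^\ast)$ has trivial $\Phi_d$-part and sits inside the semisimple group $\C^\circ_{\G^\ast}(s)$. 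I expect to rule out the remaining $\Phi_{d\ell^i}$-contributions ($i \geq 1$) to the $\ell$-part of its $F$-fixed points by combining $|\Phi_{d\ell^i}(q)|_\ell = \ell$ (Lemma \ref{cyclotomic}) with the explicit structure of the classified centralisers of quasi-isolated semisimple elements in classical adjoint groups; alternatively, one could try to construct from any nontrivial excess a proper Levi of $\G^\ast$ containing $\C^\circ_{\G^\ast}(s)$, contradicting the quasi-isolated hypothesis.
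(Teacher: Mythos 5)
The core difficulty of the statement is concentrated in your final step, which you leave as a sketch; the parts you do spell out also contain a couple of errors.

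First, the claim that ``$\C^\circ_{\Levi^\ast}(s)^F$ is Zariski dense in $\C^\circ_{\Levi^\ast}(s)$'' is false: a finite set of points is never dense in a positive-dimensional variety, and the theorem of Rosenlicht on density of rational points requires an infinite ground field. What you presumably want---that $t$ centralising $\C^\circ_{\Levi^\ast}(s)^F$ forces $t$ to centralise all of $\C^\circ_{\Levi^\ast}(s)$---can be rescued by comparing polynomial orders of $\C^\circ_{\C^\circ_{\Levi^\ast}(s)}(t)$ and $\C^\circ_{\Levi^\ast}(s)$, but it is not the Lang--Steinberg density statement you invoke.

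Second, you conflate \emph{quasi-isolated} with \emph{isolated}. Quasi-isolatedness says $\C_{\G^\ast}(s)$ is in no proper Levi; it does \emph{not} imply $\Z^\circ(\C^\circ_{\G^\ast}(s))=1$ --- that is the stronger condition of being isolated. The same confusion recurs in your fallback plan: producing a proper Levi containing $\C^\circ_{\G^\ast}(s)$ would contradict isolatedness, not quasi-isolatedness. In types $B_n,C_n$ with $\G^\ast$ adjoint the two notions do coincide for the elements considered, but that is a fact about the classification that would need a citation, not something automatic.

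Third, and most seriously, the step ``promote $\Z^\circ(\M^\ast)_{\Phi_d}=\Z^\circ(\Levi^\ast)_{\Phi_d}$ to $\M^\ast=\Levi^\ast$'' is where the entire content of the lemma lives, and you only gesture at two possible strategies without carrying either out. Note that the sandwich alone is genuinely inconclusive: for example $\M^\ast=\C^\circ_{\Levi^\ast}(s)$ also satisfies $\Z^\circ(\M^\ast)_{\Phi_d}=\Z^\circ(\Levi^\ast)_{\Phi_d}$ by $d$-cuspidality, yet is a proper subgroup of $\Levi^\ast$ in general. What must be ruled out are $\Phi_{d\ell^i}$-contributions ($i\ge 1$) to the excess torus, and this really does require the explicit classification. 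The paper's own proof takes a different and more direct route: via \cite[Proposition~13.12]{MarcBook} it reduces the statement to the purely torus-level equality $\Z^\circ(\C^\circ_{\Levi^\ast}(s))_{\Phi_d}=\Z^\circ(\C^\circ_{\Levi^\ast}(s))_{\Phi_E}$ (with $E=E_{q,\ell}$), further reduces to the connected centre of a minimal $d$-split Levi of $\C^\circ_{\G^\ast}(s)$ containing a Sylow $d$-torus, and then reads off from \cite[Table~2]{MalleCuspidal}, \cite[Example~3.5.15]{GeckMalle} and Remark~\ref{scalar descent} that the relevant polynomial order is a product of $\Phi_e$ with $e\mid 4d$ together with $\Phi_1,\Phi_2$-contributions; Lemma~\ref{cyclotomic} and $\ell\ge 5$ then finish. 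Your element-wise reformulation is a legitimate alternative framing, but as written it postpones exactly that computation to an ``I expect'' remark, so the proof has a genuine gap.
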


\begin{proof}
By the properties of $d$-cuspidal pairs, see \cite[Remark 2.2]{Cabanesgroup}, we have $\Z^\circ(\C_{\Levi^\ast}^\circ(s))_{\Phi_d}=\Z^\circ(\Levi^\ast)_{\Phi_d}$. For a torus $\T$ defined over $\mathbb{F}_q$ and $E$ a set of integers we let $\T_{\phi_E}$ the unique maximal $\Phi_E$-subgroup, see \cite[Definition 13.3, Proposition 13.5]{MarcBook}.
 To prove the claim it is therefore by \cite[Proposition 13.12]{MarcBook} sufficient to show that $\Z^\circ(\C_{\Levi^\ast}^\circ(s))_{\Phi_d}=\Z^\circ(\C_{\Levi^\ast}^\circ(s))_{\Phi_E}$, where $E:=E_{q,\ell}$.

Observe that $\Z^\circ(\C_{\Levi^\ast}^\circ(s))_{\Phi_d}$ is contained in a Sylow $d$-torus $\mathbf{S}$ of $\C_{\G^\ast}^\circ(s)$. Hence, it enough to show the statement with $\Z^\circ(\C_{\Levi^\ast}^\circ(s))$ replaced by $\Z^\circ(\C_{\C_{\G^\ast}^\circ(s)}(\mathbf{S}))$. For $\mathbf{H}$ simple of classical type the connected center of the minimal $d$-split Levi subgroups are (up to multiplication or division by a factor $q \pm 1$) of the form $(q^{d_0}-\varepsilon_d)^{a(d)}$, where $d_0 \in \{2d,d,d/2\}$, $a(d) \in \mathbb{N}$ and $\varepsilon_d \in \{\pm 1\}$, see \cite[Example 3.5.15]{GeckMalle}. Since $\ell \neq 2$, the $E_{q,\ell}$-part of the center is equal to its $d$-part by Lemma \ref{cyclotomic}. We can write the adjoint quotient $(\C_{\G^\ast}^\circ(s))_{\mathrm{ad}}$ of $\C_{\G^\ast}^\circ(s)$ as $(\C_{\G^\ast}^\circ(s))_{\mathrm{ad}}=\mathbf{H}_1 \times \mathbf{H}_2 \times \mathbf{H}_3$ for simple (or trivial) algebraic groups $\mathbf{H}_1, \mathbf{H}_2,\mathbf{H}_3$ with $(\C_{G^\ast}^\circ(s))_{\mathrm{ad}}=H_1 \times H_2 \times H_3$ or $(\C_{G^\ast}^\circ(s))_{\mathrm{ad}} \cong H_1^{F^2} \times H_3$, see \cite[Table 2]{MalleCuspidal}. In particular, by Remark \ref{scalar descent} the polynomial order of $\mathbf{S}_{\mathrm{ad}}$, the image of $\mathbf{S}$ in the adjoint quotient $(\C_{\G^\ast}^\circ(s))_{\mathrm{ad}}$, is a product of $\Phi_e$-polynomials with $e \mid 4 d$. Since $2 \neq \ell$ it follows that $\ell \nmid \Phi_{4d}(q)$ by Lemma \ref{cyclotomic}. Since $\Phi_1,\Phi_2$ are the only cyclotomic polynomials that possibly divide the generic order of $\Z^\circ(\C_{G^\ast}^\circ(s))$ by \cite[Table 2]{MalleCuspidal} and $2 \neq \ell$ the claim follows from \cite[Proposition 13.7]{MarcBook}.
\end{proof}


\begin{corollary}\label{cabanes}
	Let $b=b_{\G^F}(\Levi,\lambda)$ be a quasi-isolated block associated to the $d$-cuspidal pair $(\Levi,\lambda)$. Then the subgroup $Q:=\Z(L)_\ell$ is a Cabanes subgroup of a defect group of $b$.
\end{corollary}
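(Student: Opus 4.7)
The plan is to transport the known description of the Cabanes subgroup in the unipotent case across Enguehard's Jordan decomposition of blocks, using Lemma~\ref{isolated} (which is where the quasi-isolated hypothesis enters) to replace a centralizer by its ambient Levi.

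Recall from the proof of Lemma~\ref{lemma1} the bijection $\mathcal{B}_{G,s} : \mathrm{Bl}(G(s),1) \to \mathrm{Bl}(G,s)$, under which $b$ corresponds to the unipotent block $c$ of $G(s)$ attached to the $d$-cuspidal pair $(L(s),\lambda(s))$. By \cite[Theorem 1.6]{EnguehardJordandecomp} this correspondence induces an isomorphism of defect groups and therefore carries the Cabanes subgroup of a defect group of $c$ onto that of a defect group of $b$. For the unipotent block $c$ of $G(s)$, whose ambient group has connected center, the structural results on unipotent blocks attached to $d$-cuspidal pairs (cf.\ \cite{Cabanesgroup}) show that $\Z(L(s))_\ell$ is the Cabanes subgroup of a defect group.

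Next, assemble the chain of identifications
\[
\Z(L(s))_\ell \;\cong\; \Z(\C_{L^\ast}(s))_\ell \;=\; \Z(L^\ast)_\ell \;\cong\; \Z(L)_\ell \;=\; Q,
\]
where the outer isomorphisms come from duality for tori with connected center (applicable since $\Z(\G)$, and hence $\Z(\Levi)$, is connected), the equality $L(s)^\ast = \C_{L^\ast}(s)$ is by construction, and the middle equality is precisely Lemma~\ref{isolated}. Combined with the previous paragraph, this shows that $Q$ has the order of the Cabanes subgroup of a defect group of $b$.

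It remains to identify $Q$ itself, as a subgroup of $G$, with the Cabanes subgroup of a concretely chosen defect group $D$ of $b$. For this I would observe that $Q \leq \Z(L)$ is a normal abelian $\ell$-subgroup of $\N_G(\Levi)$, that it sits inside the natural defect group built from the $d$-cuspidal pair $(\Levi,\lambda)$ by \cite[Theorem 4.1]{Marc} (where the quotient $D/Q$ identifies with a Sylow $\ell$-subgroup of the relative Weyl group), and then invoke the order equality above to conclude that $Q$ must be the unique maximal normal abelian subgroup of $D$. The main obstacle is this last step of upgrading the abstract isomorphism of defect groups provided by Enguehard's bijection to an honest equality of subgroups of $G$; this is where one really relies on the explicit structural description of defect groups attached to $d$-cuspidal pairs together with the fact, supplied by Lemma~\ref{isolated}, that no part of the defect lies outside $\Z(L)_\ell$.
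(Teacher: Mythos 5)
Your argument has a genuine gap: you apply the connected-centre machinery directly to $G = \G^F$, but in the setting of this corollary $\G$ is simple and simply connected of classical type, so $\Z(\G)$ is \emph{disconnected} and $\C_{\G^\ast}(s)$ can be disconnected as well. The bijection $\mathcal{B}_{G,s}:\mathrm{Bl}(G(s),1)\to\mathrm{Bl}(G,s)$ and the group $G(s)$ (dual to $\C_{\G^\ast}(s)$, hence requiring $\C_{\G^\ast}(s)$ connected) were set up in Lemma~\ref{lemma1} under the standing hypothesis that $\Z(\G)$ is connected, and you even assert explicitly that ``$\Z(\G)$, and hence $\Z(\Levi)$, is connected'', which is false here. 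The paper avoids this by first passing, via the regular embedding, to a covering $d$-cuspidal pair $(\Ttilde,\tilde\lambda)$ of $(\Gtilde,F)$; since $\ell \nmid |\tilde G / G\Z(\tilde G)|$ the statement for $b$ follows from the one for the corresponding block of $\tilde G$, and it is in $\tilde G$ (connected centre) that the Jordan decomposition of blocks and the reduction to the unipotent block of $\tilde G(\tilde s)$ apply. This reduction step is not merely cosmetic; without it, the objects you reason with are not defined.

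Two further points are weaker than in the paper's proof, though closer to being rescuable. First, the claim that $\Z(L(s))_\ell$ is the Cabanes subgroup of a defect group of the unipotent block $c$ is what actually needs an argument: the paper cites \cite[Lemma~4.5]{CabEng} and uses the crucial self-centralizing inequality $D\cap\C_G(Z)\le Z$ to upgrade ``normal abelian'' to ``unique maximal normal abelian''; your ``cf.\ \cite{Cabanesgroup}'' does not pin this down. Second, for the step you yourself flag as the ``main obstacle'' — identifying $Q$ inside a concrete defect group rather than merely matching orders — the paper's device is to observe that $(\Z(\tilde L)_\ell, b_{\tilde L}(\tilde\lambda))$ is a $b_{\tilde G}(\Ttilde,\tilde\lambda)$-subpair by \cite[Theorem~2.5]{Marc}, which places $\Z(\tilde L)_\ell$ inside a defect group, and then Lemma~\ref{isolated} supplies the order equality forcing it to be the Cabanes subgroup. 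Your sketch gestures at this but does not supply the subpair argument.
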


\begin{proof}
Since $\ell \nmid |\tilde{G}/G \Z(\tilde{G})|$, it suffices to prove the statement for a $d$-cuspidal pair $(\tilde{\Levi},\tilde{\lambda})$ of $(\tilde{\G},F)$ covering $(\Levi,\lambda)$. The defect group of the block of $\tilde{G}$ associated to this $d$-cuspidal pair is isomorphic to the defect group of the block of the associated unipotent $d$-cuspidal pair $(\tilde{L}(\tilde{s}),\tilde{\lambda}(\tilde{s}))$ of $\tilde{G}(\tilde{s})$, see \cite[Proposition 5.1]{Marc}. We know that $Z:=\Z(\tilde{L}(\tilde{s}))_\ell$ is a normal abelian subgroup of a defect group $D$ of the associated unipotent block, see \cite[Lemma 4.5]{CabEng}. Moreover, by the proof of \cite[Lemma 4.5]{CabEng}, we have $D \cap \C_{G}(Z) \leq Z$. This implies that $Z$ is a maximal abelian normal subgroup of $D$ and thus the Cabanes subgroup of $D$. We also observe that $(\Z(\tilde{L})_\ell,b_{\tilde{L}}(\tilde{\lambda}))$ is a $b_{\tilde{G}}(\tilde{\Levi},\tilde{\lambda})$-subpair by \cite[Theorem 2.5]{Marc}. Since $Z$ and $\Z(\tilde{L})_\ell$ have the same order by Lemma \ref{isolated}, this shows the claim.
\end{proof}

Note that $\Levi=\C_{\G}(Q)$ by \cite[Proposition 13.19]{MarcBook}.

\begin{lemma}\label{local}
In the situation of Corollary \ref{cabanes}, the following hold.
	\begin{enumerate}[label=(\alph*)]
		\item The block $b_{\Levi^F}(\Levi,\lambda)$ is of central defect and there exists a bijection $$\Z(\Levi^\ast)_\ell^F=\Z(\C_{\Levi^\ast}(s))_\ell^F \to \Irr(b_{\Levi^F}(\Levi,\lambda)), {t} \mapsto \hat{t} \otimes \lambda.$$
		\item  Any height zero character $\chi \in \Irr_0(G,b)$ is a constituent of $R_\Levi^\G(\lambda \hat{t})$ with $t\in \Z(\Levi^\ast)_\ell^F$.
	\end{enumerate}
	
\end{lemma}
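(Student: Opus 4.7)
The strategy for both parts is to reduce to the connected-centre setting of Section~\ref{GlobChar} via the regular embedding $\iota : \G \hookrightarrow \tilde{\G}$ and then descend using that the index $[\tilde{\Levi}^F : \Levi^F \Z(\tilde{\Levi}^F)]$ is coprime to $\ell$ (since $\ell \geq 5$ exceeds the order of the component group of $\Z(\G)$ in types $B, C, D$).

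For (a), I fix a $d$-cuspidal pair $(\tilde{\Levi},\tilde{\lambda})$ of $(\tilde{\G},F)$ covering $(\Levi,\lambda)$. The argument of Corollary~\ref{cabanes} applied with $\tilde{\Levi}$ in place of $\tilde{G}$ shows that $b_{\tilde{\Levi}^F}(\tilde{\Levi},\tilde{\lambda})$ is of central defect with defect group $\Z(\tilde{\Levi})_\ell^F$, since Jordan decomposition produces a unipotent $d$-cuspidal character of $\C_{\tilde{\Levi}^\ast}(\tilde{s})^F$ whose block is of central defect by \cite[Lemma~4.5]{CabEng}. Enguehard's Jordan decomposition of blocks (as invoked in the proof of Lemma~\ref{lemma1}) further identifies its irreducible characters as $\{\tilde{\lambda}\hat{\tilde{t}} : \tilde{t} \in \Z(\tilde{\Levi}^\ast)_\ell^F\}$. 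Standard Clifford theory along $\iota$ transfers central defect from $\tilde{\Levi}^F$ to $\Levi^F$: the block $b_{\Levi^F}(\Levi,\lambda)$ has defect group $\Z(\Levi)_\ell^F \leq \Z(\Levi^F)$. For each $t\in\Z(\Levi^\ast)_\ell^F$ obtained as the image of some $\tilde{t}\in\Z(\tilde{\Levi}^\ast)_\ell^F$ under the dual of $\iota|_\Levi$, the restriction $(\tilde{\lambda}\hat{\tilde{t}})|_{\Levi^F}$ has $\lambda\hat{t}$ as a constituent, so every such $\lambda\hat{t}$ lies in $b_{\Levi^F}(\Levi,\lambda)$. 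The map $t\mapsto \lambda\hat{t}$ is injective, and using that a central-defect block contains exactly $|D|$ characters together with $|\Z(\Levi)_\ell^F| = |\Z(\Levi^\ast)_\ell^F| = |\Z(\C_{\Levi^\ast}(s))_\ell^F|$ (the first equality by duality of tori in view of the $\ell'$-condition on component groups, the second by Lemma~\ref{isolated}), I conclude that it is a bijection.

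For (b), given $\chi \in \Irr_0(G,b)$, I choose $\tilde{\chi} \in \Irr(\tilde{G})$ covering $\chi$ and lying in a block $\tilde{b}$ of $\tilde{G}$ above $b$; the $\ell'$-property of $[\tilde{G} : G\Z(\tilde{G})]$ forces $\tilde{\chi}$ to have height zero. Lemmas~\ref{lemma1} and \ref{ht zero} applied to $\tilde{G}$ yield $\tilde{t} \in \Z(\C_{\tilde{L}^\ast}(\tilde{s}))_\ell^F$ such that $\tilde{\chi}$ is a constituent of $R_{\tilde{\Levi}}^{\tilde{\G}}(\tilde{\theta})$ with $\tilde{\theta} = \psi_{\tilde{L},\tilde{s}\tilde{t}}(\psi_{\tilde{L},\tilde{s}}^{-1}(\tilde{\lambda}))$. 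By Lemma~\ref{isolated} applied in $\tilde{\G}$, $\tilde{t}$ is central in $\tilde{L}^\ast$, and the standard central-twist compatibility of Jordan decomposition identifies $\tilde{\theta} = \tilde{\lambda}\hat{\tilde{t}}$. Letting $t \in \Z(\Levi^\ast)_\ell^F$ denote the image of $\tilde{t}$ under the dual of $\iota|_\Levi$, so that $\hat{\tilde{t}}|_{\Levi^F} = \hat{t}$, the compatibility of Lusztig induction with restriction along $\iota$ then shows that $\chi$ is a constituent of $R_{\Levi}^{\G}(\lambda\hat{t})$, as required.

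The main obstacle is the careful bookkeeping for the descent from $\tilde{\G}$ to $\G$: one must verify simultaneously that defect groups, the central-twist formula for Jordan decomposition, and Lusztig induction all behave compatibly with restriction along $\iota$. Each of these steps is standard in isolation, but they rely crucially on the $\ell'$-coprimality of $[\tilde{\Levi}^F : \Levi^F \Z(\tilde{\Levi}^F)]$ and on Lemma~\ref{isolated} to ensure that the central twists remain truly central in the relevant centralizers.
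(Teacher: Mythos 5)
Your argument is correct but takes a longer route for part (a) than the paper does. The paper's proof of (a) is direct: by Lemma~\ref{isolated} (as exploited in the proof of Corollary~\ref{cabanes}), $(Q, b_L(\lambda))$ with $Q := \Z(L)_\ell^F$ is a self-centralizing $b$-subpair, so $b_L(\lambda)$ is a block of $L$ with central defect group $Q$; the parametrization of its characters by $\Z(\Levi^\ast)_\ell^F$ then drops out of the standard description of central-defect blocks (the remarks after \cite[Definition~1.13]{exceptional}) together with the bijection $\Z(L^\ast) \to \Irr(L/[\Levi,\Levi]^F)$ from \cite[(8.19)]{MarcBook}, with no need to pass through $\tilde\G$. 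Your detour through $\tilde\G$ does work, but the closing counting step implicitly requires the surjectivity of $\Z(\tilde{\Levi}^\ast)_\ell^F \to \Z(\Levi^\ast)_\ell^F$; you should record that this follows from Lang's theorem applied to the connected (toral) kernel of $\tilde{\Levi}^\ast \twoheadrightarrow \Levi^\ast$, which kills the relevant $\Ho^1$. For part (b), your argument is essentially the paper's one-liner unpacked: lift to $\tilde\G$, apply Lemma~\ref{ht zero} there, use Lemma~\ref{isolated} to identify $\tilde\theta = \tilde\lambda\hat{\tilde t}$, and descend using the compatibility of Lusztig induction with the regular embedding.
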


\begin{proof}
	By Lemma \ref{isolated}, $(Q,b_L(\lambda))$ is a self-centralizing $b$-subpair. In particular, $b_L(\lambda)$ is of central defect. From this part (a) follows from the remarks after \cite[Definition 1.13]{exceptional} and \cite[(8.19)]{MarcBook}. Part (b) follows from Lemma \ref{ht zero} and the compatibility of Lusztig induction with regular embeddings. 
\end{proof}

\subsection{Properties of blocks associated to height zero characters}

The aim of this section is to give an alternative proof of Lemma \ref{local}(b) using slightly different techniques.

\begin{lemma}\label{height0}
	Let $\G$ and $F$ as in Lemma \ref{dHC}. Suppose that $b$ is an $\ell$-block of $G$ associated to the semisimple element $s \in G^\ast$. Then 
	$\mathcal{E}(G,s) \cap \Irr_0(G,b) \neq \emptyset$.
\end{lemma}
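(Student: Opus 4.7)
The plan is to invoke Proposition~\ref{dHC} at the trivial pair $(t,\eta) = (1,1)$ and read off that the resulting character witnesses the non-emptiness claim.

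First I would use the parametrization from \ref{parameter} to fix a $d$-cuspidal pair $(\Levi,\lambda)$, with $\lambda \in \mathcal{E}(L,\ell') \cap \mathcal{E}(L,s)$, such that $b = b_{\G^F}(\Levi,\lambda)$. Taking $t=1 \in \Z(\C_{L^\ast}(s))_\ell$, the character $\theta$ from Lemma~\ref{ht zero} reduces to
\[
\theta \;=\; \psi_{L,s}\bigl(\psi_{L,s}^{-1}(\lambda)\bigr) \;=\; \lambda,
\]
so the $d$-cuspidal pair appearing in Proposition~\ref{dHC} is $(\Levi,\lambda)$ itself, and $\mathcal{E}(G,(\Levi,\lambda)) \subseteq \mathcal{E}(G,s)$. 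Moreover, by the remark following Lemma~\ref{ht zero}, every irreducible constituent of $R_\Levi^\G(\lambda)$ lies in $\Irr(G,b)$.

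Next I would apply the bijection $\eta \mapsto R_\Levi^\G(\lambda)_\eta$ of Proposition~\ref{dHC} with $\eta$ equal to the trivial character of $W_G(\Levi,\lambda)$ and verify the two height-zero criteria stated there. Condition~(i) requires $W_G(\Levi,\theta)$ to contain a Sylow $\ell$-subgroup of $W_G(\Levi,\lambda)$, which is trivial because $\theta = \lambda$ forces the equality $W_G(\Levi,\theta) = W_G(\Levi,\lambda)$. Condition~(ii) requires $\eta$ to be an $\ell'$-character, which is automatic for the trivial character. Hence $R_\Levi^\G(\lambda)_1$ lies in $\mathcal{E}(G,s) \cap \Irr_0(G,b)$, as desired.

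Since the argument consists entirely of unpacking the definitions in Proposition~\ref{dHC}, there is no substantial obstacle. The content of the lemma is conceptual: the parametrization of height-zero characters via $d$-Harish-Chandra theory is guaranteed to produce at least one such character in the Lusztig series $\mathcal{E}(G,s)$ indexing the block.
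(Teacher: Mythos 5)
Your argument is correct, but it takes a different route from the paper's. You invoke the height-zero criterion from Proposition~\ref{dHC} at $(t,\eta)=(1,1)$ and observe that both conditions are trivially satisfied there; the paper instead carries out a direct degree computation. Concretely, the paper computes the $\ell$-part of the degree of the unipotent character $\psi\in\Irr(\C_{G^\ast}(s),1)$ corresponding to the trivial character of the relative Weyl group via the degree formula following \cite[Theorem 4.6.24]{GeckMalle}, uses the $\ell$-central defect of $\lambda_s$ to simplify, and compares directly against the formula $d(b)=|W_{\C_{G^\ast}(s)}(\C_{\Levi^\ast}(s))|_\ell\,|\Z(\C_{L^\ast}(s))|_\ell$ for the defect from \cite[Lemma 4.16]{Marc}, concluding $\chi(1)_\ell=|G|_\ell\,d(b)_\ell^{-1}$. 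Your approach buys brevity by outsourcing the arithmetic to Proposition~\ref{dHC}, which is already in hand at this point in the paper (so there is no circularity). The trade-off is that you are resting on the ``in particular'' clause of that proposition, whose height-zero criterion is stated but whose verification (unwinding the degree formula and the observation that the minimum over $(t,\eta)$ is attained at $(1,1)$) is only sketched; the paper's direct computation makes Lemma~\ref{height0} self-contained and independent of that clause, which matters because the surrounding subsection aims to develop an alternative technique. Both proofs rest on the same underlying fact — the minimal-degree character in the $d$-Harish-Chandra series of $(\Levi,\lambda)$ sits in $\mathcal{E}(G,s)$ and achieves height zero — so the mathematical content is the same.
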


\begin{proof}
	The block $b$ is parametrized by the $G$-conjugacy class of a $d$-cuspidal pair $(\Levi,\lambda)$. The defect group of $b$ has order $d(b):=|W_{\C_{G^\ast}(s)}(\C_{\Levi^\ast}(s))|_\ell |\Z(\C_{L^\ast}(s))|_\ell$ by \cite[Lemma 4.16]{Marc}. The characters in 	$\mathcal{E}(G,s) \cap \Irr(G,b)$ are precisely the constitutents of the $d$-Harish-Chandra series of $(\Levi,\lambda)$, see \cite[Theorem 4.1]{Marc}. Via Jordan decomposition the $d$-Harish-Chandra series of $(\Levi,\lambda)$ is mapped to the unipotent $d$-Harish-Chandra series of $(\C_{\Levi^\ast}(s),\lambda_s)$, see \cite[Theorem 4.7.2]{GeckMalle}. The degree formula following \cite[Theorem 4.6.24]{GeckMalle} shows that the character $\psi \in \Irr(\C_{G^\ast}(s),1)$ corresponding to the trivial character of the relative Weyl group satisfies $\psi(1)_\ell=\lambda_s(1)_\ell |\C_{G^\ast}(s):\C_{L^\ast}(s)|_\ell |W_{\C_{G^\ast}(s)}(\C_{L^\ast}(s),\lambda_s)|^{-1}_\ell$ and $\lambda_s$ has $\ell$-central defect (remarks after \cite[Corollary 4.6.16]{GeckMalle}), i.e. $\lambda_s(1)_\ell=|\C_{L^\ast}(s):\Z(\C_{L^\ast}(s))|_\ell$. Its Jordan correspondent therefore has degree, $$\chi(1)_\ell=|G:\C_{G^\ast}(s)|_\ell |\C_{G^\ast}(s):\Z(\C_{L^\ast}(s))|_\ell |W_{G(s)}(\C_{L^\ast}(s),\lambda_s)|^{-1}_\ell=|G| d(b)^{-1}_\ell.$$ Hence, $\chi \in \Irr(G,b) \cap \mathcal{E}(G,s)$ is of height zero.
\end{proof}

\begin{proposition}
		Let $\G$ and $F$ as in Lemma \ref{height0}. Let $b=b_G(\Levi,\lambda)$ be an $\ell$-block of $G$ which covers a quasi-isolated $\ell$-block of $[\G,\G]^F$. Then we have $\Irr_0(G,b) \subset \cup_{t \in \Z(\Levi^\ast)_\ell} \mathcal{E}(G,st)$. Moreover, any $\chi \in \Irr_0(G,b) \cap \mathcal{E}(G,st)$ is contained in the $d$-Harish-Chandra series of $(\Levi,\hat{t} \lambda)$.
\end{proposition}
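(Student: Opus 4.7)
The plan is to avoid Enguehard's Jordan decomposition for blocks (which underlies Lemma~\ref{lemma1}) and instead rely on Lemma~\ref{height0} together with Proposition~\ref{dHC} and a degree comparison. First, I will use the existence of the reference height zero character in $\mathcal{E}(G,s)$ from Lemma~\ref{height0} to extract the $\ell$-defect of $b$. Concretely, its proof shows that the character of $\mathcal{E}(G,s)\cap\Irr_0(G,b)$ corresponding under $d$-Harish-Chandra theory to the trivial character of $W_G(\Levi,\lambda)$ has $\ell$-part of degree $|G|_\ell / d(b)_\ell$, where
$$d(b)_\ell = |W_{\C_{G^\ast}(s)}(\C_{\Levi^\ast}(s),\lambda_s)|_\ell \cdot |\Z(\C_{L^\ast}(s))|_\ell.$$

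Next, let $\chi \in \Irr_0(G,b)$ be arbitrary. By the Brou\'e--Michel decomposition \cite[Theorem 9.12]{MarcBook}, $\chi \in \mathcal{E}(G,st')$ for some $t' \in \C_{G^\ast}(s)_\ell$, and by $d$-Harish-Chandra theory applied to the Lusztig series $\mathcal{E}(G,st')$ the character $\chi$ lies in the $d$-HC series of a $d$-cuspidal pair $(\Levi_1,\theta_1)$ with $\theta_1 \in \mathcal{E}(L_1,st')$, parametrized by some $\eta \in \Irr(W_G(\Levi_1,\theta_1))$. The degree formula analogous to that derived in Proposition~\ref{dHC} (now centered at $st'$) expresses $\chi(1)_\ell$ in terms of $\theta_{1,st'}(1)_\ell$, the relative Weyl group order and $\eta(1)_\ell$; since $(\theta_1)_{st'}$ is a cuspidal unipotent character, hence of $\ell$-central defect in classical type for $\ell \geq 5$, the height zero condition $\chi(1)_\ell = |G|_\ell / d(b)_\ell$ translates into a numerical identity
$$\eta(1)_\ell \cdot |W_{\C_{G^\ast}(st')}(\C_{L_1^\ast}(st'),\theta_{1,st'})|_\ell^{-1} \cdot |\Z(\C_{L_1^\ast}(st'))|_\ell^{-1} = |W_{\C_{G^\ast}(s)}(\C_{\Levi^\ast}(s),\lambda_s)|_\ell^{-1} \cdot |\Z(\C_{L^\ast}(s))|_\ell^{-1}.$$

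Finally, I invoke the hypothesis that $b$ covers a quasi-isolated block of $[\G,\G]^F$: this means the image $\bar s$ of $s$ in the adjoint quotient $(\G^\ast)_{\mathrm{ad}}$ is quasi-isolated, so the structural analysis of $\C_{\G^\ast}(s)$ via \cite[Table 2]{MalleCuspidal} used in the proof of Lemma~\ref{isolated} goes through verbatim, giving $\Z(\C_{L^\ast}(s))_\ell = \Z(\Levi^\ast)_\ell^F$. Combined with the elementary bound $\eta(1)_\ell \leq |W_G(\Levi_1,\theta_1)|_\ell$, the numerical identity can be satisfied only when $\eta$ is an $\ell'$-character, $\C_{L_1^\ast}(st') = \C_{\Levi^\ast}(s)$ (up to conjugacy) and $t' \in \Z(\Levi^\ast)_\ell$. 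In that case $\theta_1$ and $\hat{t'}\lambda$ lie in the same Lusztig series $\mathcal{E}(L,st')$ and both are $d$-cuspidal pairs of $\Levi$ labeling $b$; by Marc's uniqueness of such pairs (\cite[Theorem 4.1]{Marc}) they are $G$-conjugate, yielding the claim. The main obstacle is the last step: showing that the numerical constraint, under the quasi-isolated hypothesis on $\bar s$, rigidly forces the centralizer of $st'$ to agree with that of $s$ inside $\Levi^\ast$ — essentially an extension of the centralizer analysis of Lemma~\ref{isolated} that now must handle the perturbation by the $\ell$-element $t'$.
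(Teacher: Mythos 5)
Your motivation (exploit Lemma~\ref{height0} and avoid Enguehard's block Jordan decomposition) matches what the paper is doing, but the mechanism you propose is genuinely different and, as you yourself flag at the end, incomplete: the degree comparison cannot close the loop. The height-zero condition gives one scalar identity among $\ell$-parts,
$$|\Z(\C_{L_1^\ast}(st'))|_\ell \cdot |W_{\C_{G^\ast}(st')}(\C_{L_1^\ast}(st'))|_\ell = d(b)_\ell \cdot \eta(1)_\ell,$$
and nothing in this single equation forces the centralizer of $st'$ in $L_1^\ast$ to coincide with $\C_{\Levi^\ast}(s)$, nor $t'$ to lie in $\Z(\Levi^\ast)_\ell$: a priori the centre factor can shrink while the relative Weyl group factor grows with the product unchanged. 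You also need the stronger conclusion that the $d$-split Levi $\Levi_1$ is actually $G$-conjugate to $\Levi$, which again does not follow from the numerics. Note too that you cannot feed $(\Levi_1,\theta_1)$ into the defect formula of \cite[Lemma 4.16]{Marc} to close the gap, since that formula requires the cuspidal character to lie in an $\ell'$-series, whereas $\theta_1 \in \mathcal{E}(L_1,st')$ has $t'$ of $\ell$-power order.

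What the paper actually extracts from Lemma~\ref{height0} is not a degree but a rigidity statement at the level of Brauer pairs. Setting $\K := \G(t)$, Lemma~\ref{height0} (applied to both $b$ and the block $c$ of $K$ corresponding under $\pm R_\K^\G\hat t$) shows that $d(b)=d(c)$, so the bijection $\mathcal{E}(K,s)\to\mathcal{E}(G,st)$ is height-preserving. Since $\K$ is $E_{q,\ell}$-split, $(\Z(K)_\ell,c)$ is a $b$-Brauer pair, and chaining it with a maximal $c$-Brauer pair $(D,c_D)$ produces, by equality of defects, a maximal $b$-Brauer pair with $\Z(K)_\ell$ normal and abelian in $D$. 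This is where Lemma~\ref{isolated}/Corollary~\ref{cabanes} enter: $\Z(L)_\ell$ is the \emph{unique} maximal normal abelian subgroup of $D$, so after conjugation $\Z(K)_\ell \leq \Z(L)_\ell$, hence $\Levi=\C_\G(\Z(L)_\ell)\subseteq \C_\G(\Z(K)_\ell)=\K$, hence $\Z(\K^\ast)\leq\Z(\Levi^\ast)$ and $t\in\Z(\Levi^\ast)$. Only then does \cite[Theorem 4.1]{Marc} identify $c = R_\Levi^{\K}(b_L(\lambda))$ and yield the $d$-Harish-Chandra membership by transitivity of Lusztig induction. So the crucial ingredient the degree-count cannot substitute for is exactly the uniqueness of the Cabanes subgroup inside the defect group; this is what lets the paper pin down the inclusion $\Levi\subseteq\K$ and with it $t\in\Z(\Levi^\ast)_\ell$.
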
	

\begin{proof}
	Assume that $\Irr_0(G,b) \cap \mathcal{E}(G,st) \neq \emptyset$ and let $\mathbf{K}:=\G(t)$. By \cite[Theorem 9.16]{MarcBook} the map $ \pm R_{\mathbf{K}}^{\G} \hat{t}:\mathcal{E}(K,s) \to \mathcal{E}(G,st)$ is a bijection.
	According to \cite[Theorem 2.5]{Marc}, there exists a unique block $c$ of $K$ such that $\pm R_{\mathbf{K}}^{\G} \hat{t}$ maps $\Irr(K,c)$ to $\Irr(G,b)$. In \cite{EnguehardJordandecomp}, the author characterizes the block $c$ by a certain $d$-cuspidal pair which he obtains from (the Jordan correspondent) of the $d$-cuspidal pair $(\Levi,\lambda)$, see also the proof of Lemma \ref{ht zero}. In our situation we make instead use of the fact that $\mathcal{E}(G,s)$ contains a height zero character in order to describe the block $c$.
	
	By Lemma \ref{height0}, it follows that the map $\mathcal{E}(K,s) \to \mathcal{E}(G,st)$ from above is height preserving. In particular, the defect groups of $b$ and $c$ have the same order. Furthermore, by \cite[Theorem 2.5]{Marc} and \cite[Proposition 13.19]{MarcBook} since $\mathbf{K}$ is $E_{q,l}$-split we obtain that $(\Z(K)_\ell,c)$ is a $b$-Brauer pair. Let $(D,c_D)$ be a maximal $c$-Brauer pair so that $D \leq K$. Hence, $(1,b) \lhd (\Z(K)_\ell,c) \lhd (D,b_D)$. By transitivity of Brauer pairs, $(D,c_D)$ is also a $b$-Brauer and since the defect groups of $c$ and $b$ have the same order it follows that $(D,c_D)$ is a maximal $b$-Brauer pair. In particular, $\Z(K)_\ell$ is a normal abelian subgroup of $D \leq M$. Since $\Z(L)_\ell=\Z(\C_{L^\ast}(s))_\ell$ by Lemma 3.3 is the unique maximal normal abelian subgroup of a defect group of $b$ we may by replacing $(\Levi,\lambda)$ by a $G$-conjugate assume that $(\Z(K)_\ell,c) \leq (\Z(L)_\ell,b_L(\lambda))$. This implies $\Levi=\C_\G(\Z(L)_\ell) \subset \C_\G(\Z(K)_\ell)=\mathbf{K}$ since both $\Levi$ and $\mathbf{K}$ are $E_{q,l}$-split (\cite[Proposition 13.19]{MarcBook}). Dually this means $\Levi^\ast \subset \mathbf{K}^\ast$ which implies $\Z(\mathbf{K}^\ast)\leq \Z(\Levi^\ast)$ and therefore $t \in \Z(\mathbf{\Levi}^\ast)$.
	
	Since $\Levi$ is an $e$-split Levi subgroup of $\mathbf{K}$ and $(\Z(L)_\ell, b_L(\lambda))$ is a $c$-Brauer pair it follows from \cite[Theorem 4.1]{Marc} that $c=R_\Levi^{\mathbf{K}}(b_L(\lambda))$ in the notation of \cite[Theorem 4.1]{Marc}.
	
	Hence, every character of $\mathcal{E}(K,s)$ appears as a constituent of $R_\Levi^{\mathbf{K}}(\lambda)$, see \cite[Theorem 4.1]{Marc}. By transitivity of Lusztig induction, every character of $\mathcal{E}(G,st)$ therefore appears as a constituent of $R_{\mathbf{K}}^{\G} \hat{t} R_\Levi^{\mathbf{K}}(\lambda)=R_\Levi^{\G}(\hat{t} \lambda)$.
\end{proof}


\section{Constructing an AM-bijection}\label{discentre}

\subsection{Conditions on the local block}

As before, we let $(\Levi,\lambda)$ be a $d$-cuspidal pair of a simple, simply connected algebraic group $\G$ of classical type $B_n$, $C_n$ or $D_n$. Denote by $\mathcal{B}$ the subgroup of $\mathrm{Aut}_{\mathbb{F}}(\G^F)$ generated by field and graph automorphisms as in \cite[Section 2.A]{TypeB}. For simplicity, we denote $N:=\N_G(\Levi)$, $\hat{N}:=\N_{G\mathcal{B}}(\Levi)$ and $\tilde{N}:=N \tilde{L}$.

\begin{definition}\label{def star}
	We say that a character $\chi \in \Irr(G)$ (resp. $\psi \in \Irr(N))$ satisfies $A'(\infty)$, if $(\tilde{G}\mathcal{B})_\chi=\tilde{G}_\chi \mathcal{B}_\chi$ (resp. $(\tilde{N}\hat{N})_\psi=\tilde{N}_\psi \hat{N}_\psi$).
\end{definition}

\begin{proposition}[Enguehard]\label{enguehard}

Let $(\Ltilde,\tilde{\lambda})$ be a $d$-cuspidal pair of $(\Gtilde,F)$ covering $(\Levi,\lambda)$.
\begin{enumerate}[label=(\alph*)]
	\item There exists $\tilde{\chi} \in \Irr(\tilde{G})$ with $\langle R_{\tilde{\Levi}}^{\Gtilde}(\tilde{\lambda}),\tilde{\chi} \rangle = \pm 1$ and the degree of $\tilde{\chi}$ is different from the degrees of all other irreducible constituents of $R_{\tilde{\Levi}}^{\tilde{\G}}(\tilde{\lambda})$.
	\item We have $\N_G(\Levi,\Res^{\tilde{L}}_L(\tilde{\lambda}))=\N_G(\Levi,\lambda)$. 
\end{enumerate}	
\end{proposition}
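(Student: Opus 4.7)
The plan is to prove (a) using the $d$-Harish-Chandra parametrization of Proposition \ref{dHC} applied to $(\tilde{\Levi}, \tilde{\lambda})$ in the connected-center group $\tilde{\G}$, and then to deduce (b) by combining (a) with the action of $\Irr(\tilde{G}/G)$ on $\Irr(\tilde{G})$ by multiplication together with the Clifford theory of the regular embedding.

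For (a), Proposition \ref{dHC} gives a bijection $\eta \mapsto R_{\tilde{\Levi}}^{\tilde{\G}}(\tilde{\lambda})_\eta$ from $\Irr(W_{\tilde{G}}(\tilde{\Levi}, \tilde{\lambda}))$ onto the constituents of $R_{\tilde{\Levi}}^{\tilde{\G}}(\tilde{\lambda})$, with the $\eta$-indexed constituent appearing with multiplicity $\pm \eta(1)$. I would take $\tilde{\chi}$ to be the image of the trivial character $1_W$, which immediately gives $\langle R_{\tilde{\Levi}}^{\tilde{\G}}(\tilde{\lambda}), \tilde{\chi} \rangle = \pm 1$. For the degree uniqueness, I would use Jordan decomposition to translate the question into a comparison of generic degrees of unipotent characters of $\C_{\tilde{\G}^\ast}(\tilde{s})$ attached to irreducible characters of the relative Weyl group $W = W_{\C_{\tilde{\G}^\ast}(\tilde{s})}(\C_{\tilde{\Levi}^\ast}(\tilde{s}), \tilde{\lambda}_{\tilde{s}})$, using the explicit formula \cite[Theorem 4.6.24]{GeckMalle}. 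In types $B$, $C$, $D$ the group $W$ is a product of Weyl groups of smaller classical types, possibly with a complex reflection factor $G(de,e,a)$, and a direct case-by-case comparison should show that the generic degree polynomial attached to $1_W$ is not matched by the polynomial of any other irreducible character of $W$.

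For (b), the inclusion $\N_G(\Levi, \lambda) \subseteq \N_G(\Levi, \Res^{\tilde{L}}_L \tilde{\lambda})$ is immediate: any $n \in \N_G(\Levi, \lambda)$ normalizes $\tilde{L} = \Z(\tilde{\G})^F L$ and therefore permutes the $\tilde{L}$-orbit of $\lambda$, fixing the sum $\Res^{\tilde{L}}_L \tilde{\lambda}$. For the reverse inclusion, I would take $n \in \N_G(\Levi, \Res^{\tilde{L}}_L \tilde{\lambda})$, so that $n \cdot \lambda = l \cdot \lambda$ for some $l \in \tilde{L}$, and by Clifford theory for $L \lhd \tilde{L}$ write $n \cdot \tilde{\lambda} = (l \cdot \tilde{\lambda}) \cdot \hat{z}$ with $\hat{z} \in \Irr(\tilde{L}/L)$. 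Applying $R_{\tilde{\Levi}}^{\tilde{\G}}$, using that $n \in \tilde{G}$ acts trivially on $\Irr(\tilde{G})$, and the projection formula $R_{\tilde{\Levi}}^{\tilde{\G}}(\tilde{\lambda} \cdot \hat{z}) = R_{\tilde{\Levi}}^{\tilde{\G}}(\tilde{\lambda}) \cdot \hat{z}_{\tilde{G}}$ (where $\hat{z}_{\tilde{G}} \in \Irr(\tilde{G}/G)$ is the lift of $\hat{z}$ under the canonical isomorphism $\tilde{L}/L \simeq \tilde{G}/G$), one obtains $R_{\tilde{\Levi}}^{\tilde{\G}}(\tilde{\lambda}) = R_{\tilde{\Levi}}^{\tilde{\G}}(\tilde{\lambda}) \cdot \hat{z}_{\tilde{G}}$. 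Hence $\hat{z}_{\tilde{G}} \cdot \tilde{\chi}$ has the same multiplicity and degree as $\tilde{\chi}$, and by (a) we must have $\hat{z}_{\tilde{G}} \cdot \tilde{\chi} = \tilde{\chi}$. Since $\hat{z}_{\tilde{G}} \cdot \tilde{\chi} \in \mathcal{E}(\tilde{G}, \tilde{s}z)$ for the central element $z \in \Z(\tilde{G}^\ast)^F$ dual to $\hat{z}_{\tilde{G}}$, this forces $z = 1$, and so $\hat{z} = 1$ and $n \cdot \tilde{\lambda} = l \cdot \tilde{\lambda}$. Modulo $\tilde{L}$ this places $n\tilde{L}$ in $W_{\tilde{G}}(\tilde{\Levi}, \tilde{\lambda})$, and the final identification of $W_G(\Levi, \lambda)$ with $W_{\tilde{G}}(\tilde{\Levi}, \tilde{\lambda}) \cap N/L$, via Jordan decomposition and the quasi-isolated structure of the semisimple label $s$, then yields $n \in \N_G(\Levi, \lambda)$.

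The main obstacle will be the explicit generic degree comparison needed in (a), which requires a case-by-case analysis of the relative Weyl groups for quasi-isolated $d$-cuspidal pairs in types $B$, $C$, $D$. In (b), the delicate point is the last step, where one must identify $W_{\tilde{G}}(\tilde{\Levi}, \tilde{\lambda}) \cap N/L$ with $W_G(\Levi, \lambda)$; this uses the rigidity of the Jordan correspondent of $(\tilde{\Levi}, \tilde{\lambda})$ combined with the restricted combinatorics available for quasi-isolated semisimple elements in classical types.
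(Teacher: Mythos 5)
The paper does not prove this proposition at all; it is attributed to Enguehard and proved by direct citation of \cite[2.3.1]{EnguehardJordandecomp} and \cite[Proposition 2.3.2]{EnguehardJordandecomp}. You are attempting a full reconstruction, which is a legitimate but much more laborious route. Unfortunately the reconstruction has gaps in both parts.

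For (a), your outline is plausible, but you stop exactly at the point where the work begins. You would need to actually carry out the comparison of generic degree polynomials $D_\eta$ of unipotent characters of $\C_{\tilde{\G}^\ast}(\tilde{s})$ attached to $\eta \in \Irr(W)$ and show that $D_{1_W}$ is not matched by any other $D_\eta$; this is a genuine case analysis over the possible relative Weyl groups of $d$-cuspidal pairs in classical type, and the phrase ``a direct case-by-case comparison should show'' is not a proof. This is precisely the content of Enguehard's result; asserting that it ``should'' hold amounts to citing it without checking.

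For (b), there is a concrete logical error. From $\tilde{\chi}\cdot\hat{z}_{\tilde{G}}=\tilde{\chi}$ you infer $z=1$, on the grounds that $\tilde{\chi}\cdot\hat{z}_{\tilde{G}}\in\mathcal{E}(\tilde{G},\tilde{s}z)$ while $\tilde{\chi}\in\mathcal{E}(\tilde{G},\tilde{s})$. What this actually gives is that $\tilde{s}$ and $\tilde{s}z$ are $\tilde{G}^{\ast F}$-conjugate, not that $z=1$: since $\Z(\tilde{\G})$ is connected, $\mathcal{E}(\tilde{G},\tilde{s})$ depends only on the conjugacy class of $\tilde{s}$, and a central $z\neq 1$ can well satisfy $\tilde{s}z\sim\tilde{s}$ (this is the generic situation whenever $\lambda$ is not $\tilde{L}$-stable, which the paper explicitly shows happens, e.g. Lemma~\ref{blocktheory}). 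So the equality $\tilde{\chi}\hat{z}_{\tilde{G}}=\tilde{\chi}$ gives no control on $z$, and your deduction $n\cdot\tilde{\lambda}=\tilde{\lambda}$ does not follow. The correct use of (a), as the paper illustrates in the proof of Corollary~\ref{coro}(b), is to pass to a constituent $\chi\in\Irr(G\mid\tilde{\chi})$ lying in the $d$-Harish-Chandra series of $(\Levi,\lambda)$ and invoke disjointness of $d$-Harish-Chandra series (\cite[Theorem A]{Cabanesgroup}); the argument never needs $z=1$. Finally, the closing appeal to ``the quasi-isolated structure of the semisimple label $s$'' is out of place: the proposition carries no quasi-isolated hypothesis, and you do not explain what combinatorial input would actually close the last step.
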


\begin{proof}
	Part (a) is \cite[2.3.1]{EnguehardJordandecomp} while part (b) is \cite[Proposition 2.3.2]{EnguehardJordandecomp}.
\end{proof}

\begin{corollary}\label{coro}
Let $(\Levi,\lambda)$ be as in the previous proposition. Then we have:
	\begin{enumerate}[label=(\alph*)]
		\item $(N\tilde{L})_\lambda=N_\lambda \tilde{L}_\lambda$.
		\item There exists some $\tilde{L}$-conjugate $\lambda_0$ of $\lambda$ such that
		$$(\hat{N} \tilde{L})_{\lambda_0}=\hat{N}_{\lambda_0} \tilde{L}_{\lambda_0}.$$
		\item The stabilizer of $b_G(\Levi,\lambda)$ and of $\Ind_L^N(\lambda)$ in $\N_{\tilde{G} \mathcal{B}}(\Levi)= \hat{N} \tilde{L}$ is the same. In particular, the number of $\tilde{G}$-conjugate blocks to $b_G(\Levi,\lambda)$ is $|\tilde{L}:\tilde{L}_\lambda|$.
	\end{enumerate}
\end{corollary}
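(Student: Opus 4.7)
\emph{Plan.} My strategy is to extract all three parts from Proposition~\ref{enguehard} and from the identity $\hat{N}\cap\tilde{L}=L$ (which holds because, inside $\tilde{G}\mathcal{B}$, one has $G\mathcal{B}\cap\tilde{G}=G$ and $G\cap\tilde{L}=L$). Throughout, let $\mathcal{O}$ denote the $\tilde{L}$-orbit of $\lambda$ in $\Irr(L)$.

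\emph{Part (a).} Given $x = n\tilde{l}\in(N\tilde{L})_\lambda$ with $n\in N$ and $\tilde{l}\in\tilde{L}$, the identity $\lambda^n = \lambda^{\tilde{l}^{-1}}\in\mathcal{O}$ shows that $n$ permutes $\mathcal{O}$, i.e.\ that $n$ stabilises the multiplicity-free sum $\Res^{\tilde{L}}_L(\tilde{\lambda})=\sum_{\mu\in\mathcal{O}}\mu$ (multiplicity-freeness uses that $\tilde{L}/L$ is abelian, via Clifford theory). Proposition~\ref{enguehard}(b) then forces $n\in N_\lambda$, and from $\lambda^n=\lambda$ one gets $\tilde{l}\in\tilde{L}_\lambda$.

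\emph{Part (b).} Applying Proposition~\ref{enguehard}(b) to every $\mu\in\mathcal{O}$ yields $N_\mu=N_\mathcal{O}$ for all $\mu$, so $N_\mathcal{O}$ acts pointwise trivially on $\mathcal{O}$. A short calculation (using $\hat{N}\cap\tilde{L}=L\subseteq\tilde{L}_{\lambda_0}$) reduces the desired equality $(\hat{N}\tilde{L})_{\lambda_0}=\hat{N}_{\lambda_0}\tilde{L}_{\lambda_0}$ to the condition that $\lambda_0$ be a fixed point of $\hat{N}_\mathcal{O}$ on $\mathcal{O}$. The $\hat{N}_\mathcal{O}$-action then factors through the finite quotient $\hat{N}_\mathcal{O}/N_\mathcal{O}$, which embeds in $\hat{N}/N$. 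Identifying $\mathcal{O}$ with the torsor $\tilde{L}/\tilde{L}_\lambda$, the action of any $\sigma\in\hat{N}_\mathcal{O}$ is affine, with linear part the conjugation automorphism of $\tilde{L}/\tilde{L}_\lambda$ and translation part $\sigma(\lambda)\lambda^{-1}\in\tilde{L}/\tilde{L}_\lambda$. I would then produce a common fixed point by a Hilbert~90 / Schur--Zassenhaus style argument: for simply connected $\G$ of type $B_n$ or $C_n$ the quotient $\tilde{L}/L$ is a $2$-group, and since $q$ is odd the field automorphisms in $\mathcal{B}$ act trivially on this $2$-quotient, trivialising the obstructing $H^1(\hat{N}_\mathcal{O}/N_\mathcal{O},\tilde{L}/\tilde{L}_\lambda)$.

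\emph{Part (c).} Any $x\in\hat{N}\tilde{L}$ normalises both $\Levi$ and $N=\N_G(\Levi)$, so $\Ind^N_L(\lambda)^x=\Ind^N_L(\lambda^x)$; by the induction/restriction formalism this equals $\Ind^N_L(\lambda)$ exactly when $\lambda^x$ is $N$-conjugate to $\lambda$. On the other hand, the block $b_G(\Levi,\lambda)$ depends only on the $G$-class of $(\Levi,\lambda)$ and $x$ already fixes $\Levi$, so its stabiliser in $\hat{N}\tilde{L}$ admits the very same description. Hence the two stabilisers coincide. For the count of $\tilde{G}$-conjugates, $[\tilde{G}:\tilde{G}_b]=[\tilde{L}:\tilde{L}\cap\tilde{G}_b]$ by $\tilde{G}=G\tilde{L}$ and $G\cap\tilde{L}=L$, and a direct computation (again using Proposition~\ref{enguehard}(b) to see that $\lambda^{\tilde{l}}\in N\cdot\lambda$ forces $\lambda^{\tilde{l}}=\lambda$) identifies $\tilde{L}\cap\tilde{G}_b$ with $\tilde{L}_\lambda$.

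\emph{Main obstacle.} The reductions in (a) and (c) are essentially formal consequences of Proposition~\ref{enguehard}; the real content sits in (b), whose proof requires the cohomological / fixed-point input above and is the only place where the classical-type structure of $\G$ (the $2$-group $\tilde{L}/L$ together with the trivial action of field automorphisms on it) is used in an essential way.
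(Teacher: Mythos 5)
Your part (a) agrees with the paper, which simply notes that it is a reformulation of Proposition~\ref{enguehard}(b), and your part (c) is in the same spirit as the paper's one-line remark (which invokes the uniqueness of $d$-Harish-Chandra series from \cite[Theorem A]{Cabanesgroup}). The real divergence, and the real problem, is in part (b).

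The paper's proof of (b) does not use a fixed-point/cohomology argument at all. It picks the ``isolated'' constituent $\tilde{\chi}\in\Irr(\tilde{G})$ of $R_{\tilde{\Levi}}^{\tilde{\G}}(\tilde{\lambda})$ from Proposition~\ref{enguehard}(a) --- the one occurring with multiplicity $\pm1$ and with a degree distinct from all other constituents --- then invokes the nontrivial representation-theoretic input from the introduction of \cite{TypD}, namely that some $\chi\in\Irr(G\mid\tilde{\chi})$ satisfies $A'(\infty)$. The $\tilde{L}$-conjugate $\lambda_0$ is then chosen so that $\langle R_\Levi^\G(\lambda_0),\chi\rangle=\pm1$, and the stabiliser computation is done by tracking $\tilde{\chi}$ and $\chi$ through the action of $\hat{n}$ using the degree/multiplicity rigidity of Proposition~\ref{enguehard}(a), the $A'(\infty)$ property of $\chi$, and again \cite[Theorem A]{Cabanesgroup}. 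None of this is ``free'': it is exactly where the known equivariant Jordan decomposition results and the $A'(\infty)$ results for the global character enter.

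Your replacement for (b) --- view $\mathcal{O}$ as a torsor under $\tilde{L}/\tilde{L}_\lambda$, factor the $\hat{N}_{\mathcal O}$-action through $\hat{N}_{\mathcal O}/N_{\mathcal O}$, and kill the obstruction class in $H^1(\hat{N}_{\mathcal O}/N_{\mathcal O},\tilde{L}/\tilde{L}_\lambda)$ --- has a genuine gap. The coefficient module $\tilde{L}/\tilde{L}_\lambda$ is a $2$-group and, in types $B_n$ and $C_n$, the acting group $\hat{N}_{\mathcal O}/N_{\mathcal O}$ is (a subgroup of a quotient of) the cyclic group $\mathcal{B}$ generated by the field automorphism $F_p$, whose order $\log_p q$ is frequently even. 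So there is no coprimality, and Schur--Zassenhaus does not apply. Worse, the observation that the linear (conjugation) action is trivial works \emph{against} you: for a cyclic group acting trivially on a nontrivial $2$-group $M$, $H^1=\Hom(\text{cyclic},M)$ is generically nonzero. The existence of a fixed point of $\hat{N}_{\mathcal O}$ on $\mathcal{O}$ is precisely the content of Proposition~\ref{enguehard}(a) plus the $A'(\infty)$ results of \cite{TypD}, and cannot be recovered by the cohomological argument you sketch; that is why the paper spends the effort citing those sources.
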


\begin{proof}
Part (a) is a reformulation of Proposition \ref{enguehard}(b). For part (b), let $\tilde{\chi} \in \Irr(\tilde{G})$ as in Proposition \ref{enguehard}(a).
By the introduction of \cite{TypD} there exists a character $\chi \in \Irr(G \mid \tilde{\chi})$ which satisfies $A'(\infty)$. In particular, there exists some $\tilde{L}$-conjugate $\lambda_0$ of $\lambda$ such that $\langle R_\Levi^\G(\lambda_0),\chi \rangle = \pm 1$, see the proof of \cite[Prop. 2.3.2]{EnguehardJordandecomp}.

 Now assume that ${}^{\tilde{l} \hat{n}} \lambda_0=\lambda_0$ for $\tilde{l} \in \tilde{L}$ and $\hat{n} \in \hat{N}$. Then ${}^{\hat{n}} \tilde{\lambda}=\tilde{\lambda} \hat{z}$ for some $z \in \Z(\tilde{G}^\ast)$. From this it follows that $\tilde{\chi}^{\hat{n}} \hat{z}^{-1}$ and $\tilde{\chi}$ are both constituents of the same degree of $R_{\tilde{\Levi}}^{\tilde{\G}}(\tilde{\lambda})$. Hence, $\tilde{\chi}^{\hat{n}} \hat{z}^{-1}=\tilde{\chi}$ and so $\chi^{\hat{n}}=\chi$ by the $A'(\infty)$ condition. This means that the $d$-Harish-Chandra series $(\Levi,\lambda)$ and $(\Levi,{}^{\hat{n}}\lambda)$ have a non-trivial intersection hence these pairs must be $N$-conjugate by \cite[Theorem A]{Cabanesgroup}. In particular, $\tilde{l}$ stabilizes the $N$-orbit of $\lambda_0$ and thus $\tilde{l} \in \tilde{L}_{\lambda_0}$ by part (a).
Part (c) follows similarly from \cite[Theorem A]{Cabanesgroup}.
\end{proof}



We expect that in many cases the properties of the $d$-cuspidal pair $(\Levi,\lambda)$ (or more precisely the characters of $[\Levi,\Levi]^F$ lying below $\lambda$) determine the properties of the block of $N$ covering it. In view of the similarity to Definition \ref{def star}, we make the following definition:

\begin{definition}\label{cuspstar}
We say that a $d$-cuspidal pair $(\Levi,\lambda)$ of $G$ satisfies $A'(\infty)$, if $(\hat{N} \tilde{L})_\lambda=\hat{N}_\lambda \tilde{L}_\lambda$.
\end{definition}

Define $\hat{W}:=\N_{G \mathcal{B}}(\Levi)/L=\hat{N}/L$ and $W(\theta):=\N_{G}(\Levi,\theta)/L$ for $\theta \in \Irr(H)$ with $L \leq H \leq \tilde{L}$. The definition of extension maps used in the following can be found for instance in \cite[Section 2]{TypeB}. We will work with the following assumption.

\begin{assumption}\label{assumption}
	We suppose that $(\Levi,\lambda)$ is a $d$-cuspidal pair of $G$ which satisfies $A'(\infty)$. Moreover we assume the following:
	\begin{enumerate}[label=(\roman*)]
		\item 

	There exists an $\hat{N}_\lambda$-equivariant extension map $\Lambda$ from $L$ to $N$ for $\Irr(L,b_L(\lambda))$.
	\item 
	For $\theta \in \Irr(L,b_L(\lambda))$, $\widetilde{\theta} \in \Irr(\tilde{L}_{\theta} \mid \theta)$ and $\eta_0 \in \Irr(W(\widetilde{\theta}))$ there exists some $\hat{W}(\theta)_{\eta_0}$-stable $\eta \in \Irr(W(\theta)\mid \eta_0)$ with $\langle \Res_{W(\widetilde{\theta})}^{W(\theta)}(\eta), \eta_0 \rangle=1$. 
		\end{enumerate}
\end{assumption}

 \begin{remark}\label{rem}
\begin{enumerate}[label=(\alph*)]
	\item Note that condition (ii) is always satisfied when $W(\theta)=W(\tilde{\theta})$.
	\item The multiplicity one condition in (ii) is always satisfied whenever $\G$ is not of type $D_{2n}$ (proof of \cite[Theorem 4.2]{TypeB})
	 or when $d=1$ by \cite[Corollary 13.13]{Bonnafe2}.
	\item Assumption \ref{assumption} was checked in \cite{TypD} when  $d=1$ with $G=D_{n}(q)$.
\end{enumerate}
\end{remark}
\subsection{Constructing an AM-bijection}

Recall from \cite[Proposition 13.16,13.19]{Enguehard} that $\Levi=\C_{\G}(\Z(\Levi)_\ell^F)$. In particular, there exists by \cite[Theorem 9.19,9.20]{NavarroBook} a unique block $B$ of $\N_G(\Levi)$ covering $b_{L}(\lambda)$. The following proposition can be seen as a blockwise version of \cite[Proposition 1.12]{TypD}.

\begin{proposition}\label{star}
Assume that the block $b_G(\Levi,\lambda)$ is a quasi-isolated block of $G$. Moreover, suppose that the block $b_{L}(\lambda)$
satisfies Assumption \ref{assumption}. Then every character of $B$ has a $\tilde{L}_b$-conjugate which satisfies $A'(\infty)$. 
\end{proposition}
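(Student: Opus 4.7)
The plan is to parametrise $\Irr(N,B)$ via Clifford theory relative to $L\lhd N$, using the $\hat{N}_\lambda$-equivariant extension map of Assumption~\ref{assumption}(i), and then to use the flexibility of Assumption~\ref{assumption}(ii) to replace any given character of $B$ by a carefully chosen $\tilde{L}_b$-conjugate whose stabiliser in $\tilde{N}\hat{N}$ admits the required splitting.

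Let $\psi\in\Irr(N,B)$. Since $B$ is the unique block of $N$ covering $b_L(\lambda)$, we have $\psi\in\Irr(N\mid\theta)$ for some $\theta\in\Irr(L,b_L(\lambda))$. The extension map yields $\Lambda(\theta)\in\Irr(N_\theta)$ extending $\theta$, and by Gallagher's theorem there is a unique $\eta\in\Irr(W(\theta))$ with
$$\psi=\Ind_{N_\theta}^N\bigl(\Lambda(\theta)\cdot\hat{\eta}\bigr),$$
where $\hat{\eta}$ is the inflation along $N_\theta/L\cong W(\theta)$. I would then fix an extension $\tilde{\theta}\in\Irr(\tilde{L}_\theta\mid\theta)$, which exists as $\tilde{L}/L$ is abelian, and pick an irreducible constituent $\eta_0\in\Irr(W(\tilde\theta))$ of $\Res^{W(\theta)}_{W(\tilde\theta)}(\eta)$.

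Applying Assumption~\ref{assumption}(ii) produces a $\hat{W}(\theta)_{\eta_0}$-stable $\eta'\in\Irr(W(\theta)\mid\eta_0)$ with $\langle\Res^{W(\theta)}_{W(\tilde\theta)}(\eta'),\eta_0\rangle=1$. Since both $\eta$ and $\eta'$ lie above $\eta_0$, Clifford theory applied to $W(\tilde\theta)\leq W(\theta)$ (realised inside $\tilde{N}_\theta/L$, where $\tilde{L}_\theta/L$ acts by multiplication with linear characters) shows that $\psi':=\Ind_{N_\theta}^N(\Lambda(\theta)\hat{\eta'})$ lies in the $\tilde{L}_\theta$-orbit of $\psi$, hence in its $\tilde{L}_b$-orbit since $B$ is $\tilde{L}_b$-stable.

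The last step is to verify $A'(\infty)$ for $\psi'$. Given $x=\tilde{l}\hat{n}\in(\tilde{N}\hat{N})_{\psi'}$, conjugation by $x$ permutes the $N$-conjugates of $\theta$; modifying $\tilde l$ and $\hat n$ by a common element of $N\subseteq\tilde{N}\cap\hat{N}$, one may assume both stabilise $\theta$. Corollary~\ref{coro}(a) and the $A'(\infty)$-condition on $(\Levi,\lambda)$ built into Assumption~\ref{assumption} then force the stabiliser of $\theta$ in $\hat{N}\tilde{L}$ to split as $\tilde{L}_\theta\cdot\hat{N}_\theta$. The $\hat{N}_\lambda$-equivariance of $\Lambda$ transfers the $\hat n$-action to an action on the parameter $\eta'$, and the $\hat{W}(\theta)_{\eta_0}$-stability of $\eta'$ combined with the multiplicity-one property decouples the $\tilde l$- and $\hat n$-actions, yielding $\hat n\in\hat{N}_{\psi'}$ and $\tilde l\in\tilde{N}_{\psi'}$. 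The main obstacle lies precisely here: one must track carefully how the two normal series $L\lhd N_\theta\lhd\tilde{N}_\theta$ and $L\lhd\tilde{L}_\theta\lhd\tilde{N}_\theta$ interact, and it is exactly the multiplicity-one clause in Assumption~\ref{assumption}(ii) that makes the decoupling possible.
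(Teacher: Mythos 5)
Your plan follows the paper's route closely, but the step you flag as ``the main obstacle'' is exactly the one that remains unproved, so the proposal has a genuine gap rather than a complete argument.

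Two points need to be filled in. First, the splitting $(\hat{N}\tilde{L})_\theta=\hat{N}_\theta\tilde{L}_\theta$ for an arbitrary $\theta\in\Irr(L,b_L(\lambda))$ does not follow immediately from Corollary~\ref{coro}(a) together with the $A'(\infty)$-condition on $\lambda$: one needs the observation that $\lambda$ is the \emph{canonical character} of $b_L(\lambda)$, so that any element of $\hat{N}\tilde{L}$ stabilising $\theta$ necessarily stabilises $b_L(\lambda)$ and hence $\lambda$, giving $(\hat{N}\tilde{L})_\theta\leq(\hat{N}\tilde{L})_\lambda=\hat{N}_\lambda\tilde{L}_\lambda$; then since $\tilde{L}_\theta=\tilde{L}_\lambda$ one intersects to get the desired splitting for $\theta$. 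Without this mechanism the claim that the stabiliser of $\theta$ splits is unjustified. Second, and more seriously, the ``decoupling'' at the end is asserted but not carried out. The actual mechanism in the paper is concrete: if $\tilde{l}\hat{n}^{-1}$ stabilises $\psi'$, one moves $\hat{n}$ by an element of $N$ to $\hat{n}'\in\hat{N}_\theta$, and then comparing
$\Ind_{N_\theta}^N(\eta'^{\hat{n}'}\Lambda(\theta))=\psi'^{\hat{n}'}=\psi'^{\tilde{l}}=\Ind_{N_\theta}^N(\eta'\nu\Lambda(\theta))$
for some $\nu\in\Irr(W(\theta)/W(\tilde\theta))$ (using $\hat{N}_\lambda$-equivariance of $\Lambda$ on the left, and the bijection $\tilde{L}_\theta/\tilde{L}_{\Lambda(\theta)}\to\Irr(W(\theta)/W(\tilde\theta))$ on the right) yields $\eta'^{\hat{n}'}=\eta'\nu$; the $W(\theta)\hat{W}(\theta)_{\eta_0}$-stability of $\eta'$ then forces $\nu=1$, whence $\hat{n}'$ and $\tilde{l}$ both fix $\psi'$. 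Your sketch names the ingredients (equivariance, stability of $\eta'$, multiplicity one) but never performs this comparison, which is precisely where the proof lives. Similarly, the claim that $\psi'$ lies in the $\tilde{L}_\theta$-orbit of $\psi$ should be justified via the explicit bijection $\tilde{L}_\theta/\tilde{L}_{\Lambda(\theta)}\to\Irr(W(\theta)/W(\tilde\theta))$, $t\mapsto\nu_t$, rather than an appeal to ``Clifford theory applied to $W(\tilde\theta)\leq W(\theta)$.''
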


\begin{proof}
By Lemma \ref{local} we have $\Irr(L,b_L(\lambda))=\{ \lambda \otimes \hat{t} \mid t \in \Z(L^\ast)_\ell \}$. The character $\lambda$ is the canonical character of the block $b_L(\lambda)$, see \cite[Theorem 9.12]{NavarroBook}. Together with Assumption \ref{assumption} this implies $$(\hat{N} \tilde{L})_{\theta}\leq (\hat{N} \tilde{L})_{\lambda} = \hat{N}_{\lambda} \tilde{L}_{\lambda}$$ for $\theta:=\lambda \otimes \hat{t}$. Since $\tilde{L}_\lambda=\tilde{L}_{\theta}$, we deduce that $(\hat{N} \tilde{L})_{\theta}=\hat{N}_{\theta} \tilde{L}_{\theta}$.
The extension map $\Lambda$ yields by Clifford theory a parametrization $$\Irr(W(\theta)) \to \Irr(N \mid \theta), \eta \mapsto \Ind^{N}_{N_{\theta}}(\eta \Lambda(\theta)).$$
For such a character $\psi:=\Ind^{N}_{N_{\theta}}(\eta \Lambda(\theta))$ let $\eta_0$ be a constituent of $\Res_{W(\tilde{\theta})}^{W(\theta)}(\eta)$. For $t \in \tilde{L}_\theta$, there exists a unique character $\nu_t \in \Irr(W(\theta)/W(\tilde{\theta}))$ such that ${}^t \Lambda(\theta)=\Lambda(\theta) \nu_t$. It follows that the map $$\tilde{L}_{\theta}/\tilde{L}_{\Lambda(\theta)} \to \Irr(W(\theta)/W(\tilde{\theta})), t \mapsto \nu_t,$$
is bijective, see the proof of \cite[Theorem 4.3]{TypeC}. Hence, by replacing $\psi$ by a $\tilde{L}_{\theta}=\tilde{L}_\lambda$-conjugate, we can assume that $\eta$ satisfies Assumption \ref{assumption}(ii). Assume now that some $\tilde{l} \hat{n}^{-1}$ with $\tilde{l} \in \tilde{L}$ and $\hat{n} \in \hat{N}$ stabilizes $\psi$. In particular, $\tilde{l} \hat{n}^{-1}$ stabilizes the $N$-orbit of $\theta$. We deduce that $\tilde{l} \in \tilde{L}_{\theta}$ and there exists some $n \in N$ such that $\hat{n}':=\hat{n} n \in \hat{N}_{\theta}$. We deduce that
$$ \Ind_{N_{\theta}}^{N}(\eta^{\hat{n}'} \Lambda(\theta))=\psi^{\hat{n}'}=\psi^{\tilde{l}}=\Ind_{N_{\theta}}^{N}(\eta \nu \Lambda(\theta)),$$
for some linear character $\nu \in \Irr(W(\theta)/W(\tilde{\theta}))$.
From this, we deduce that $\eta^{\hat{n}'}=\eta \nu$ and as $\eta \in \Irr(W(\theta))$ is $W(\theta)\hat{W}(\theta)_{\eta_0}$-stable, we have $\nu=1$. In particular, $\psi \in \Irr(N,B)$ satisfies $A'(\infty)$.
%
%
\end{proof}
Suppose that there exists an $\hat{N}_\lambda$-equivariant extension map $\Lambda$ from $L$ to $N$ for $\Irr(L,b_L(\lambda))$ as in Assumption \ref{assumption}. This then yields an $\hat{N}_\lambda$-equivariant extension map $\tilde{\Lambda}$ for $\tilde{L} \lhd \tilde{N}$ for $\Irr(\tilde{L},b_L(\lambda))$, the set of characters of $\tilde{L}$ lying over a character of $\Irr(L,b_L(\lambda))$, which is compatible with the action of linear characters of $\Irr(\tilde{N}/N)$, see the proof of \cite[Theorem 4.2]{TypeB}.

\begin{lemma}\label{max ext}
	Suppose Assumption \ref{assumption} and let $B$ be as before. Then every character of $\Irr(N,B)$ extends to its inertia group in $\tilde{N}$.
\end{lemma}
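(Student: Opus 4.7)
The plan is to first parametrise $\psi$ via Clifford theory using the extension map $\Lambda$, normalise $\psi$ up to $\tilde{N}$-conjugation with the help of Assumption~\ref{assumption}(ii), and then construct the extension by combining $\tilde{\Lambda}(\widetilde{\theta})$ with the Weyl group character data. By Lemma~\ref{local}(a), Assumption~\ref{assumption}(i) and the Clifford correspondence, every $\psi\in\Irr(N,B)$ has the form $\psi=\Ind_{N_\theta}^{N}(\eta\,\Lambda(\theta))$ for some $\theta\in\Irr(L,b_L(\lambda))$ and some $\eta\in\Irr(W(\theta))$ inflated through $N_\theta/L=W(\theta)$. Since extending $\psi$ to $\tilde{N}_\psi$ is equivalent, by conjugation of the extension, to extending any $\tilde{N}$-conjugate of $\psi$ to its inertia group, I am free to replace $\psi$ by an $\tilde{L}_\theta$-conjugate. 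Fixing an extension $\widetilde{\theta}\in\Irr(\tilde{L}_\theta\mid\theta)$ and an irreducible constituent $\eta_0\in\Irr(W(\widetilde{\theta}))$ of $\Res^{W(\theta)}_{W(\widetilde{\theta})}(\eta)$, the bijection $t\mapsto\nu_t$ from $\tilde{L}_\theta/\tilde{L}_{\Lambda(\theta)}$ onto $\Irr(W(\theta)/W(\widetilde{\theta}))$ recalled in the proof of Proposition~\ref{star} shows that $\tilde{L}_\theta$-conjugation on $\psi$ acts on $\eta$ by tensoring with arbitrary characters in $\Irr(W(\theta)/W(\widetilde{\theta}))$. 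Applying Assumption~\ref{assumption}(ii), I may therefore arrange after such a conjugation that $\eta$ is $\hat{W}(\theta)_{\eta_0}$-stable and that $\langle\Res^{W(\theta)}_{W(\widetilde{\theta})}(\eta),\eta_0\rangle=1$.

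With this normalisation in place, I apply $\tilde{\Lambda}$ to $\widetilde{\theta}$ to obtain $\tilde{\Lambda}(\widetilde{\theta})\in\Irr(\tilde{N}_{\widetilde{\theta}})$, where $\tilde{N}_{\widetilde{\theta}}=\tilde{L}_\theta N_{\widetilde{\theta}}$. Inflating $\eta_0$ trivially along $\tilde{L}_\theta/L$ and multiplying produces the character $\zeta:=\eta_0\cdot\tilde{\Lambda}(\widetilde{\theta})$ of $\tilde{N}_{\widetilde{\theta}}$, which restricts on $N_{\widetilde{\theta}}$ to $\eta_0\cdot\Lambda(\theta)|_{N_{\widetilde{\theta}}}$. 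The stabiliser $\tilde{L}_\psi:=\tilde{L}_\theta\cap\tilde{N}_\psi$ is identified under the $\nu_t$-bijection with the stabiliser of $\eta$ under tensoring by $\Irr(W(\theta)/W(\widetilde{\theta}))$, and consequently $\tilde{N}_\psi=\tilde{L}_\psi N$. I then plan to define $\tilde{\psi}$ as the Clifford induction from $\tilde{L}_\psi N_{\widetilde{\theta}}=\tilde{N}_\psi\cap\tilde{N}_{\widetilde{\theta}}$ to $\tilde{N}_\psi$ of the restriction $\zeta|_{\tilde{L}_\psi N_{\widetilde{\theta}}}$, possibly twisted by a linear character of $\tilde{L}_\psi/L$ that records how the fixed extension $\eta$ of $\eta_0$ behaves under the $\nu_t$-action.

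The main technical obstacle will be verifying that $\tilde{\psi}$ is irreducible and that $\Res^{\tilde{N}_\psi}_N(\tilde{\psi})=\psi$. This amounts to a Mackey-type computation comparing $\Ind_{N_\theta}^{N}$ with $\Ind_{\tilde{L}_\psi N_{\widetilde{\theta}}}^{\tilde{N}_\psi}$, in which the multiplicity-one condition of Assumption~\ref{assumption}(ii), together with the $\hat{W}(\theta)_{\eta_0}$-stability of $\eta$, is used to rule out spurious constituents and to pin down the correct twist. This construction parallels the explicit extension constructions carried out in \cite[Theorem 4.2]{TypeB} and \cite[Theorem 4.3]{TypeC}.
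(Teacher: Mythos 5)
Your proposal takes a genuinely different route from the paper, but it is not a complete proof: the final paragraph is an explicit \emph{plan}, not a verification, and the step you flag as the ``main technical obstacle'' is precisely the content of the lemma.

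The paper argues in the opposite direction. It fixes $\psi\in\Irr(N,B)$, picks \emph{any} $\tilde\psi\in\Irr(\tilde N\mid\psi)$, writes it as $\Ind_{\tilde N_{\tilde\theta}}^{\tilde N}(\tilde\Lambda(\tilde\theta)\tilde\eta)$ via Clifford theory for $\tilde L\lhd\tilde N$, and applies Mackey's formula to compute $\Res_N^{\tilde N}(\tilde\psi)$. Using the identity
$\Ind_{N_{\tilde\theta}}^{N_\theta}\bigl(\Res^{\tilde N_{\tilde\theta}}_{N_{\tilde\theta}}(\tilde\eta\,\tilde\Lambda(\tilde\theta))\bigr)=\sum_{\eta\in\Irr(W(\theta)\mid\tilde\eta)}\eta\Lambda(\theta)$,
which encodes the compatibility of $\Lambda$ and $\tilde\Lambda$, together with the multiplicity-one condition in Assumption~\ref{assumption}(ii) for the induction $\Ind^{W(\theta)}_{W(\tilde\theta)}(\tilde\eta)$, it concludes that $\Res_N^{\tilde N}(\tilde\psi)$ is multiplicity-free. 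Since $\tilde N/N$ is abelian, multiplicity-freeness of the restriction of $\tilde\psi$ is \emph{equivalent} to $\psi$ extending to $\tilde N_\psi$, and the proof is done. No explicit extension is ever constructed, and there is no need to normalise $\psi$ up to $\tilde L_\theta$-conjugacy or to invoke $\hat W(\theta)_{\eta_0}$-stability (that stability in Assumption~\ref{assumption}(ii) is needed for the $A'(\infty)$ statement in Proposition~\ref{star}, not for this extendability lemma). The paper also separately handles the case where $\theta$ is not $\tilde L$-stable, where $\tilde N_\psi/N\Z(\tilde G)$ is cyclic and the claim is immediate; your argument implicitly reduces to the $\tilde L$-stable case without saying so.

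The gap in your proposal is concrete: you never carry out the Mackey/Clifford computation that would show $\tilde\psi$ (as you define it) is irreducible and restricts to $\psi$, and the twist you allow (``possibly twisted by a linear character $\ldots$ that records how $\eta$ behaves'') is left undetermined. Without this the argument does not close. In fact, if you did carry out that Mackey computation you would essentially be reproving that $\Res_N^{\tilde N}(\tilde\psi)$ is multiplicity-free, at which point the standard Clifford-theoretic criterion already gives the lemma directly, and the explicit construction of $\tilde\psi$ becomes superfluous. Switching to the multiplicity-free criterion, as the paper does, avoids the normalisation step, avoids the unspecified twist, and makes the use of Assumption~\ref{assumption}(ii) transparent.
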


\begin{proof}
	Let $\psi \in \Irr(N)$ and $\theta \in \Irr(L \mid \psi)$. We can assume that $\theta$ is $\tilde{L}$-stable since otherwise $\tilde{N}_\psi/N \Z(\tilde{G})$ is cyclic and the claim trivially holds. We can write $\tilde{\psi} \in \Irr(\tilde{N} \mid \psi)$ as $\tilde{\psi}=\Ind_{\tilde{N}_{\theta} }^{\tilde{N}}(\tilde{\Lambda}(\tilde{\theta}) \tilde{\eta})$ for $\tilde{\theta} \in \Irr(\tilde{L} \mid \theta)$ and $\tilde{\eta} \in \Irr(W_{\tilde{G}}(\tilde{\Levi},\tilde{\theta}))$. By Mackey's formula,
	$$\Res_{N}^{\tilde{N}}(\tilde{\psi} )=\Ind_{N_{\tilde{\theta}}}^{N} \Res^{\tilde{N}_{\tilde{\theta}}}_{N_{\tilde{\theta}}}(\tilde{\eta} \tilde{\Lambda}(\tilde{\theta}) ).$$
	Now, $\Ind^{W(\lambda)}_{W({\tilde{\lambda}})}(\tilde{\eta})$ is multiplicity free as well by Assumption \ref{assumption}(ii) and Clifford theory.
	By the construction of the extension map $\tilde{\Lambda}$ in \cite[Theorem 4.2]{TypeB} we observe that $$\Ind^{N_{\theta}}_{N_{\tilde{\theta}}}( \Res^{\tilde{N}_{\tilde{\theta}}}_{N_{\tilde{\theta}}}(\tilde{\eta} \tilde{\Lambda}(\tilde{\theta}) ))=\sum_{\eta \in \Irr(W(\theta) \mid \tilde{\eta})} \eta\Lambda(\theta).$$
	By Clifford correspondence, $\Res_N^{\tilde{N}}(\tilde{\psi})$ is therefore multiplicity free. Clifford theory now gives the claim.
\end{proof}

Note that since $\ell \nmid |\tilde{G}/G \Z(\tilde{G})|$ every character of $\tilde{G}$ covering a character in $\Irr_0(G,b)$ is a height zero character of a block lying over $b$. We denote by $\Irr_0(\tilde{G},b)$ the set of these characters. Similarly, $\Irr_0(\tilde{N},B)$ denotes the set of characters of $\tilde{N}$ covering a character of $\Irr_0(N,B)$.

Recall the notion of AM-good relative to the Cabanes subgroup of its defect group from \cite[Remark 9.6]{Jordan3}.
Applying the criterion from \cite[Theorem 2.4]{Brough} yields the following:

\begin{theorem}\label{bijection}
	Let $\G$ be of type $B_n$ or $C_n$, $p$ an odd prime and $\ell \geq 5$. Moreover, let $b=b_G(\Levi,\lambda)$ be a quasi-isolated block and $B$ as above. Suppose that Assumption \ref{assumption} is satisfied.
	Then there exists an automorphism equivariant bijection $f:\Irr_0(G,b) \to \Irr_0(N,B)$. In particular, the block $b$ is AM-good relative ot the Cabanes subgroup of its defect group.
\end{theorem}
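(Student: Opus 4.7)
The plan is to build the bijection $f$ by matching the global parametrization from Proposition \ref{dHC} with a local parametrization obtained via the extension map of Assumption \ref{assumption}(i) together with Clifford theory, and then to feed the resulting data into the criterion \cite[Theorem 2.4]{Brough}. On the global side, Lemma \ref{local}(b) combined with Proposition \ref{dHC} (applied after passing to $\tilde{\G}$ and restricting, which is harmless as $\ell \nmid |\tilde{G}/G\Z(\tilde{G})|$) identifies $\Irr_0(G,b)$ with the set of pairs $(\theta,\eta)$ where $\theta \in \Irr(L,b_L(\lambda))$ runs over $N$-orbits of the form $\lambda \hat{t}$, $t \in \Z(\Levi^\ast)_\ell^F$, and $\eta \in \Irr(W_G(\Levi,\theta))$ is an $\ell'$-character such that $W_G(\Levi,\theta)$ contains a Sylow $\ell$-subgroup of $W_G(\Levi,\lambda)$. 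On the local side, I would use the $\hat{N}_\lambda$-equivariant extension map $\Lambda$ from Assumption \ref{assumption}(i) and write each character of $\Irr(N,B)$ via Clifford correspondence as $\Ind_{N_\theta}^N(\Lambda(\theta)\eta)$, with the height zero condition translating again to the $\ell'$-ness of $\eta$ and the Sylow condition on $W(\theta)$, since $b_L(\lambda)$ has central defect by Lemma \ref{local}(a).

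Once both sides are parametrized by the same data, I would define $f$ by matching the parameters, i.e. $f\colon R_\Levi^\G(\theta)_\eta \mapsto \Ind_{N_\theta}^N(\Lambda(\theta)\eta)$, and verify equivariance under $\tilde{G}\mathcal{B}$. The $\tilde{G}$-action is absorbed into the orbit choice of $\theta$ since all characters of $b_L(\lambda)$ are $\tilde{L}$-translates of $\lambda$ by Lemma \ref{local}(a); the $\mathcal{B}$-equivariance comes from the joint equivariance statements in Proposition \ref{dHC} (globally) and the $\hat{N}_\lambda$-equivariance of $\Lambda$ (locally), noting that these actions are compatible through the natural identification of the relative Weyl groups on both sides.

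Next, I would invoke the criterion of \cite[Theorem 2.4]{Brough}. Its hypotheses break into three parts: an $A'(\infty)$-type condition for characters in $\Irr_0(G,b)$ and $\Irr_0(N,B)$, an extension property to stabilizers in $\tilde{G}$ and $\tilde{N}$, and the existence of the equivariant character bijection just constructed. The $A'(\infty)$-condition on the local side for (a $\tilde{L}$-conjugate of) each character of $B$ is exactly Proposition \ref{star}; the analogous statement on the global side follows from Corollary \ref{coro}(b) applied to $\lambda_0$ together with the usual argument that $\langle R_\Levi^\G(\lambda_0),\chi \rangle = \pm 1$ for some $\chi$ satisfying $A'(\infty)$, propagated to arbitrary pairs $(\theta,\eta)$ using Proposition \ref{dHC} and the tensor product with $\hat{t}$. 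The extension property in $\tilde{N}$ is Lemma \ref{max ext}; the corresponding extendibility in $\tilde{G}$ follows from the Jordan decomposition and standard extension results for finite reductive groups with connected centre, combined again with the $A'(\infty)$ condition.

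The main obstacle I expect is the compatibility of the two parametrizations under the action of $\tilde{G}\mathcal{B}$ and $\tilde{N}\hat{N}$, and in particular verifying that the same choice of $\eta$ satisfying Assumption \ref{assumption}(ii) simultaneously controls the $\tilde{G}$-action on the global side (via the isomorphism $\tilde{L}_\theta/\tilde{L}_{\Lambda(\theta)} \cong \Irr(W(\theta)/W(\tilde{\theta}))$ used in the proof of Proposition \ref{star}) and the $\tilde{N}$-action on the local side. Once this compatibility is established, the remaining ingredients of \cite[Theorem 2.4]{Brough} are routine, and one concludes that $b$ is AM-good relative to the Cabanes subgroup $Q = \Z(L)_\ell$ identified in Corollary \ref{cabanes}.
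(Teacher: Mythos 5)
Your overall strategy matches the paper's: compose the $d$-Harish-Chandra parametrization of Proposition~\ref{dHC} with the Clifford-theoretic parametrization coming from the extension map, obtain an equivariant bijection, check the $A'(\infty)$ and extendibility conditions, and invoke \cite[Theorem 2.4]{Brough}. The deviation is in the packaging and, more importantly, in the global $A'(\infty)$ step.

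The paper works at the level of $\tilde{G}$ and $\tilde{N}$: it builds a bijection $\tilde{f}\colon\Irr_0(\tilde{G},b)\to\Irr_0(\tilde{N},B)$ by matching, for each $\tilde\theta\in\Irr(\tilde L,b_L(\lambda))$ and $\tilde\eta\in\Irr(W_{\tilde G}(\tilde\Levi,\tilde\theta))$, the character $R_{\tilde\Levi}^{\tilde\G}(\tilde\theta)_{\tilde\eta}$ to $\Ind_{\tilde N_{\tilde\theta}}^{\tilde N}(\tilde\eta\,\tilde\Lambda(\tilde\theta))$, and verifies that both parametrizations identify pairs precisely under $N_\lambda$-conjugacy. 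This is what the criterion \cite[Theorem 2.4]{Brough} directly consumes. Your proposal to parametrize $\Irr_0(G,b)$ and $\Irr_0(N,B)$ directly and ``absorb the $\tilde G$-action into the orbit choice of $\theta$'' is workable in principle, but it introduces exactly the bookkeeping you flag as an obstacle at the end, and avoiding it is precisely why the paper stays at the $\tilde G/\tilde N$ level until the final citation.

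The genuine gap is in your treatment of the global $A'(\infty)$ condition. Corollary~\ref{coro}(b) only produces one $\tilde L$-conjugate $\lambda_0$ of the $d$-cuspidal character $\lambda$ with the good stabilizer property; to pass from that single pair to all of $\Irr_0(G,b)$ you would need a propagation argument exactly parallel to Proposition~\ref{star}, which on the local side hinges on Assumption~\ref{assumption}(ii). Your sketch (``propagated to arbitrary pairs $(\theta,\eta)$ using Proposition~\ref{dHC} and the tensor product with $\hat t$'') does not supply the global analogue of that multiplicity-one/stability input, so the propagation is not established. The paper avoids the issue entirely by quoting the result from the introduction of \cite{TypD} that every character of $G$ (hence of $b$) satisfies $A'(\infty)$ -- the same source that underlies Corollary~\ref{coro}(b), but used in its full strength rather than only for the cuspidal pair. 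If you want to keep your route, you would need to spell out and justify that propagation; otherwise, cite the global statement directly as the paper does.

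Your appeal to Lemma~\ref{max ext} for extendibility in $\tilde N$ and to Proposition~\ref{star} for the local $A'(\infty)$ condition is correct and agrees with the paper. The additional remark about extendibility in $\tilde G$ via ``Jordan decomposition and standard extension results'' is not something the paper invokes; it appears to be unnecessary for \cite[Theorem 2.4]{Brough} as used here, and inserting an unverified claim there weakens the argument.
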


\begin{proof}
		By Clifford theory, we have for $\tilde{\theta} \in \Irr(\tilde{L}, b_{L}(\lambda))$ a bijection
		$$\Irr(W_{\tilde{G}}(\tilde{\Levi},\tilde{\theta})) \to \Irr(\tilde{N} \mid \tilde{\theta}), \, \tilde{\eta} \mapsto \Ind_{\tilde{N}_{\tilde{\theta}}}^{\tilde{N}}(  \tilde{\eta} \Lambda(\tilde{\theta})),$$
		where $\tilde{\Lambda}$ is the extension map defined before Lemma \ref{max ext}.
		We compose this with the bijection
		$$\Irr(W_{\tilde{G}}(\tilde{\Levi}, \tilde{\theta}) ) \to \mathcal{E}(\G^F,(\tilde{\Levi},\tilde{\theta})),  \, \tilde{\eta} \mapsto R_{\tilde{\Levi}}^{\tilde{\G}}(\tilde{\theta})_{\tilde{\eta}}$$
		 from Proposition \ref{dHC}.
		Note that by the properties of both bijections, the tuples $(\tilde{\theta},\tilde{\eta})$ and $(\tilde{\theta}',\tilde{\eta}')$ give in both cases the same character if and only if they are $N_\lambda$-conjugate.
		 Hence, the composition yields an $(\N_{\tilde{G}\mathcal{B}}(Q) \ltimes \Irr(\tilde{G}/G))_b$-equivariant bijection $\tilde{f}:\Irr_0(\tilde{G},b) \to \Irr_0(\tilde{N},B)$. According to Proposition \ref{star} every character in $B$ satisfies $A'(\infty)$. Moreover by the introduction of \cite{TypD} every character of $b$ satisfies $A'(\infty)$. The claim follows therefore from the proof of \cite[Theorem 2.4]{Brough}.
\end{proof}

\section{Isolated blocks for groups of type $B$}\label{TypeB}

%
%
%
%
Assume now that $\G$ is simple, simply connected of type $B$ defined over a field of odd characteristic $p$. By the reduction in \cite{Jordan3} it is enough for our purposes to consider isolated blocks. 

%

\begin{lemma}\label{structure}
For a given isolated element $s \in G^\ast$ and $(\Levi,\lambda)$ a $d$-cuspidal pair of $\G$ with $\lambda \in \mathcal{E}(L,s)$ the structure of the Levi subgroup is one of the following:
\end{lemma}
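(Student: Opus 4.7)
The plan is to combine the classification of isolated semisimple elements in $G^\ast$ with the known description of unipotent $d$-cuspidal pairs of their centralizers, transported back to $\Levi$ via Jordan decomposition.

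Since $\G$ is simply connected of type $B_n$, its dual $\G^\ast$ is adjoint of type $C_n$. First I would apply Borel--de Siebenthal to the extended Dynkin diagram of $C_n$: besides the central elements, the connected centralizer of any isolated $s$ has rational type $C_k \times C_{n-k}$ for some $1 \leq k \leq n-1$, with a component group of order at most $2$. One then restricts to those classes representable by an element of $G^\ast$ of $\ell'$-order, noting $\ell \geq 5$ is odd.

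Next, by Jordan decomposition as in Lemma~\ref{ht zero}, the pair $(\Levi,\lambda)$ with $\lambda \in \mathcal{E}(L,s)$ corresponds to a unipotent $d$-cuspidal pair $(\C_{\Levi^\ast}(s),\lambda_s)$ of $\C_{\G^\ast}(s)$, where $\C_{\Levi^\ast}(s)$ is a $d$-split Levi subgroup of $\C_{\G^\ast}(s)$. By the Brou\'e--Malle--Michel/Fong--Srinivasan classification in type $C$, the minimal $d$-split Levi supporting a unipotent $d$-cuspidal character inside a factor $C_m$ is of the form $\T_{d_0,\varepsilon_d}^{a} \times C_{m'}$, where $\T_{d_0,\varepsilon_d}$ is a one-dimensional torus of polynomial order $q^{d_0}-\varepsilon_d$ (with $\varepsilon_d\in\{\pm1\}$ dictated by the parity of $d$), $a \geq 0$, and $m'$ is a triangular number recording the rank of the $d_0$-core of the underlying symbol. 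Applying this to each of the two $C$-factors of $\C^\circ_{\G^\ast}(s)$ yields the candidates for $\C_{\Levi^\ast}(s)$.

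Finally, I would lift back to $\Levi$ via duality, using $\Levi^\ast = \C_{\G^\ast}(\Z^\circ(\Levi^\ast)_{\Phi_d})$ together with $\C_{\Levi^\ast}(s) = \Levi^\ast \cap \C_{\G^\ast}(s)$. Each $\T_{d_0,\varepsilon_d}^a$ block on the $\G^\ast$-side lifts to a linear factor $\GL_a(q^{d_0})$ or to its unitary form (distinguished by $\varepsilon_d$) inside $\Levi^F$, while the leftover $C$-factor lifts to a smaller simply connected group of type $B$ inside $\Levi$. Enumerating over the admissible isolated classes and cuspidal data reads off the finite list of Levi structures asserted by the lemma. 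The main obstacle will be the bookkeeping between the cases $d_0=d$ (odd $d$) and $d_0=d/2$ (even $d$), which changes the torus and $\GL$-factors, together with verifying that when the component group of $\C_{\G^\ast}(s)$ is nontrivial the $F$-twisting does not produce additional factors beyond those listed.
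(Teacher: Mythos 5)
Your overall strategy matches the paper's: reduce via Jordan decomposition to unipotent $d$-cuspidal pairs of $\C_{\G^\ast}(s)$, classify the $d$-split Levi subgroups $\C_{\Levi^\ast}(s)$ there (the paper cites \cite[Example 3.5.29]{GeckMalle} together with Remark~\ref{scalar descent} in the twisted case $C_{n/2}(q^2)$ rather than BMM/Fong--Srinivasan, but this is the same information), and then recover $\Levi$ itself. Your Borel--de Siebenthal step is a fine way to produce the list $C_k(q)C_{n-k}(q)$, $C_{n/2}(q^2)$ of isolated centralizers, which the paper takes as given.

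The genuine gap is in the ``lift back via duality'' step, which is where almost all of the paper's work lives and where your description goes wrong. You assert that a $\T_{d_0,\varepsilon_d}^a$ block in $\C_{\Levi^\ast}(s)$ lifts to a factor $\GL_a(q^{d_0})$ (or its unitary form) of $\Levi^F$. That is not what happens. In cases 1 and 3 of Table~\ref{table} the Levi $\Levi^F$ has \emph{no} $A$-type factor at all: it is simply $B_m(q)(q^{d_0}+\varepsilon)^a$. In case 2 it has $a/2$ copies of $A_1(q^{d_0})$ (not a $\GL_a$) together with a torus $(q^{d_0}+\varepsilon)^{a/2}$. Deciding, for each case, whether $A$-factors appear in $\Levi$ at all and, when they do, pinning down their rank is exactly the content of the paper's argument. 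The paper does this by (i) writing the general form of a $d$-split Levi of $B_n$ as $A_{n_1}(\varepsilon q^{d_0})\cdots A_{n_x}(\varepsilon q^{d_0})\,B_y(q)\,(q^{d_0}-\varepsilon)^x$ via \cite[Example 3.5.15]{GeckMalle}, (ii) determining $x$ by matching the $\Phi_d$-part of $\Z^\circ(\Levi^\ast)$ with that of $\Z^\circ(\C_{\Levi^\ast}(s))$, (iii) determining $y=m$ from the involution centralizer $C_{m/2}(q^2)$ via \cite[Table 4.3.1]{GLS}, and (iv) forcing $n_i\le 1$ because the cuspidal character on each $A_{n_i}$-factor restricts to a $1$- or $2$-cuspidal character of $\SL_{n_i+1}(\varepsilon q^{d_0})$ whose associated semisimple element has order $\le 2$ with Coxeter-torus centralizer, then comparing polynomial orders to get $n_i=1$. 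Your proposal addresses none of (i)--(iv) and replaces them with an incorrect claim, so as written it would not establish the table.
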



\begin{table}[htbp]
	\caption{Isolated $\ell$-blocks in $B_n(q)$}   \label{table}
	\[\begin{array}{|r|r|l|ll|}
		\hline
		\text{No.}& \C^\circ_{\bG^*}(s)^F& d_0 & \bL^F& \C^\circ_{\bL^*}(s)^F\\
		\hline\hline
		1 &        C_{k}(q) C_{n-k}(q)& & B_m(q) (q^{d_0}+\varepsilon)^{a}& C_l(q) C_{m-l}(q) (q^{d_0}+\varepsilon)^{a}\\
		\hline
		2&        C_{n/2}(q^2)& d_0 \text{ odd }& B_m(q) A_1(q^{d_0})^{a/2}(q^{d_0}+\varepsilon )^{a/2}& C_{m/2}(q^2) (q^{2d_0}-1)^{a/2}\\
		\hline
		3 & C_{n/2}(q^2)& d_0 \text{ even } & B_{m}(q) (q^{d_0}+1)^{a}& C_{m/2}(q^2) (q^{d_0}+1)^{a}\\
		\hline
	\end{array}\]
Here $l,m$ are integers, $a:=(n-m)/d_0$ and $\varepsilon:=(-1)^{d}$.
\end{table}


\begin{proof}
	
	Let $(\Levi,\lambda)$ be a $d$-cuspidal pair as in the statement of the lemma.  According to \cite[Remark 2.2]{Cabanesgroup}, this $d$-cuspidal pair corresponds via Jordan decomposition to the $\C_{G^\ast}(s)$-orbit of a $d$-cuspidal unipotent pair $(\C^\circ_{\Levi^{\ast}}(s),\lambda_s)$ of $\C^\circ_{\G^\ast}(s)$ and $\Levi^\ast=\C_{\G^\ast}(\Z^\circ(\C^\circ_{\Levi^\ast}(s))_{\Phi_d})$.

We therefore first determine the possible rational structures of the $d$-split Levi subgroups $\C^\circ_{\Levi^{\ast}}(s)$ of $\C^\circ_{\G^\ast}(s)$. If $\C^\circ_{G^\ast}(s)$ is of type $C_k(q) C_{n-k}(q)$ then the structure indicated in Table \ref{table} follows directly from \cite[Example 3.5.29]{GeckMalle}.
Let's therefore consider the case when $\C^\circ_{G^\ast}(s)$ is of type of $C_{n/2}(q^2)$. Recall that $d$ is the order of $q$ modulo $\ell$ and $d_0$ is the order of $q^2$ modulo $\ell$. We now apply \cite[Example 3.5.29]{GeckMalle} together with Remark \ref{scalar descent}. We conclude that if $d_0$ is odd, the various $d$-split Levi subgroups have the form 
$$C_{m/2}(q^2) ((q^2)^{d_0}-1)^{n-m/2d_0}$$ for some integer $m$ while if $d_0$ is even the Levi subgroups have rational form 
$$C_{m/2}(q^2) ((q^2)^{d_0/2}+1)^{n-m/2d_0}=C_{m/2}(q^2) (q^{d_0}+1)^{n-m/2d_0}$$
for some $m$.

We now compute the structure of $(\Levi,F)$ and we concentrate on case 2 since the computations in the other cases are similar.
In \cite[Example 3.5.15]{GeckMalle} the structure of a $d$-split Levi subgroup is described. In particular $L$ has rational type
$$A_{n_1}( \varepsilon q^{d_0}) \dots A_{n_x}( \varepsilon q^{d_0}) B_y(q) (q^{d_0}-\varepsilon)^{x}$$
for some integers $x,y,n_1,\dots,n_x$ which satisfy $d_0x+y+ \sum_{i=1}^x d_0 n_i=n$. Since $\Z^\circ(L)$ has order $(q^d-1)^x$ and $\Z^\circ(\C^\circ_{\Levi^\ast}(s))_{\Phi_d}=\Z^\circ(\Levi^\ast)_{\Phi_d}$, we must have $x=\frac{n-m}{2d_0}$.

The natural inclusion $[\Levi,\Levi] \hookrightarrow \Levi$ induces a surjective dual map $\Levi^\ast \twoheadrightarrow [\Levi,\Levi]^\ast$ and we denote by $s_0$ the image of $s$ under this map. Then any character $\lambda_0 \in \Irr(L_0 \mid \lambda)$ where $L_0:=[\Levi,\Levi]^F$ lies in the Lusztig series associated to $s_0$ and is $d$-cuspidal. We now consider the projection of $\lambda_0$ to the various rational components of $L_0$.

In particular, the projection of $s_0$ onto the simple component of $([\Levi,\Levi]^\ast)^F$ of type $C_y(q)$ is an involution with centralizer containing $C_{m/2}(q^2)$. Hence, we must have $y = m$, see \cite[Table 4.3.1]{GLS}.

Note that a $d$-cuspidal character of $\SL_{n_i+1}(\varepsilon q^{d_0})$ with respect to its $\mathbb{F}_{q}$-structure corresponds to a $1$-cuspidal character (resp. $2$-cuspidal if $\varepsilon=-1$) of $\SL_{n_i+1}(\varepsilon q^{d_0})$ with respect to its $\mathbb{F}_{q^{d_0}}$-structure, see Remark \ref{scalar descent}. Now, the connected centralizer of the semisimple element associated to such a cuspidal (resp. $2$-cuspidal) character of $\SL_{n_i+1}(\varepsilon q^{d_0})$ is a Coxeter torus, see the proof of \cite[Proposition 2.7]{BroughC}. Moreover, since this semisimple element has order at most $2$, by \cite[Table 4.3.1]{GLS} it follows that $n_i \leq 1$. We can now compare this with the polynomial order of $\C_{\Levi^\ast}(s)$ and it follows that $n_i = 1$ for all $i$. In particular, $L$ has the structure as described in Table \ref{table}.
\end{proof}

For $d=1$, the results of the previous lemma were already obtained in \cite[Table 4.1]{Taylor3}.

We observe that in cases 1 and 3, the appearing Levi subgroups were already considered in Cabanes--Späth \cite{TypeB}, see Remark \ref{modifaction}. In particular, we can in the following focus on case 2.
\section{The local condition for groups of type $B$}

\subsection{Root system and Weyl group}
	Let ${\bf T}$ be an $F$-stable maximal torus and ${\bf B}$ be an $F$-stable Borel subgroup of ${\bf G}$ with ${\bf T}\subseteq {\bf B}$.
%
Set $\Phi\subseteq {\rm Hom}({\bf T},\mathbb{F}^\times)$ and $\Phi^+\supseteq \Delta$ to be the corresponding set of roots, positive and simple roots associated with the choice of ${\bf B}$.
The Chevalley generators ${\bf x}_\alpha(t)$, ${\bf n}_\alpha(t')$ and ${\bf h}_\alpha(t')$, for $\alpha\in \Phi$, $t,t'\in \mathbb{F}$  with $t'\ne 0$, together with the Chevalley relations describe the group structure of $\G$, see \cite[Theorem 1.12.1]{GLS}.
In particular, ${\bf n}_\alpha(t)={\bf x}_\alpha(t){\bf x}_{-\alpha}(-t^{-1}){\bf x}_\alpha(t)\in {\rm N}_{\G}({\bf T})$ and ${\bf h}_{\alpha}(t)={\bf n}_\alpha(t){\bf n}_\alpha(1)^{-1}\in {\bf T}$ for $t\in \mathbb{F}^\times$. We let $F_q: \G \to \G$ be the Frobenius endomorphism with $F_q(\x_\alpha(t))=\x_{\alpha}(t^q)$ for $\alpha \in \Delta$. Recall Tits' extended Weyl group $\V:=\langle {\bf n}_\alpha(1)\mid \alpha\in \Phi\rangle \leq {\rm N}_{\G}({\bf T})$ and $\bH:={\bf T}\cap \V$.

%
The root system $\Phi$ of $\bG$ and its system of simple roots $\Delta =\{\alpha_1,\ldots,\alpha_n\}$ are given as follows (see for instance \cite[1.8.8]{GLS} with a slightly different convention). Letting $(e_1,\dots ,e_n)$ be the standard orthonormal basis of $\mathbb{R}^n$ with its euclidean scalar product, one takes 
$$\alpha_1= e_1, \alpha_2=e_2-e_1, \dots , \alpha_n=e_n-e_{n-1}.$$
We identify the Weyl group $\W$ of $\bG$, a Coxeter group of type $B_n$ with the subgroup of bijections $\sigma$ on $\{ \pm 1, \ldots \pm n\} $ such that $\sigma(-i)=-\sigma(i)$ for all $1\leq i\leq n$.
 This group is denoted by $\mathfrak{S}_{\pm n}$. 
Via the natural identification of $S_n$ with a quotient of $\mathfrak{S}_{\pm n}$ and the identification of $\mathfrak{S}_{\pm n}$ with $\W$, the map $\rho: {\rm N}_{\G}({\bf T}) \to \W$ induces an epimorphism $\overline{\rho}\colon \V\longrightarrow \mathfrak{S}_n$.
(see \cite[Definition 4.1]{TypeB}).
In $\mathfrak{S}_{\pm n}$ the set
$$ \left \{ 
\sigma \in \mathfrak{S}_{\pm n} \, \, \big| |\{1,\ldots, n\} \cap \sigma^{-1}(\{-1,\ldots, -n\})| \text{ is even } \right \} $$
forms a normal subgroup with index $2$, naturally isomorphic to a Coxeter group of type $D_n$. We denote by $\W_{D}$ the associated subgroup of $\W$ and $\bN_c:=\rho^{-1}(\W_{D})$. 

\subsection{Sylow $d$-tori}\label{Sylow}

Take $n'\leq n$ to be maximal such that $d_0\mid n'$.
Set $a:=\frac{n'}{d_0}$, $v_0:={\bf n}_{\alpha_1}(1)\cdots {\bf n}_{\alpha_{n'}}(1)\in \V$ and $v:=v_0^{\frac{2n'}{d}}$. Additionally set $$w_0:=\rho(v_0)=(1,2,\dots, n',-1,\dots,-n') \text{ and } w=(w_0)^{\frac{2n'}{d}}.$$
Finally, set $v':=(v_0)^a$ and 
$$w':=\rho(v')=(w_0)^a=\prod_{i=1}^a (i,a+i,2a+i,\dots, (d_0-1)a+i,-1,\dots).$$
Note that $\C_{\W}(w)=\C_{\W}(w')$. For our computations it is sometimes convenient to work with $w'$ instead of $w$.
According to \cite[Remark 3.2]{BS}, the Sylow $d$-torus of $({\bf T},vF_q)$ is one of $({\bf G},vF_q)$. In particular, any $d$-split Levi subgroup of $(\G,vF_q)$ is up to $\G^{vF_q}$-conjugation the centralizer of an $vF_q$-stable subtorus of $\T$.

\subsection{Structure of Levi subgroups}\label{structure Levi}
Recall that we are only interested in $d$-split Levi subgroups whose fixed point subgroup under $vF_q$ is of type $A_1(q^{d_0})^{\frac{n-m}{2d_0}} B_m(q)$ for some $m$ with $2 d_0 \mid (n-m)$. Therefore, we first replace $v$ by an alternate twist which is chosen to be "nice" with respect to the considered Levi subgroup. For this purpose set $$l:=n-m, \, t_l:=\frac{l}{2d_0} \text{ and }a_l:=2 t_l .$$
The following constructions will depend on the parameter $l$.

There is some $h\in V$ such that $$((w')^{\rho(h)})^{-1}\left( \prod_{i=1}^{a_l}(i,a_l+i,\dots, (d_0-1)a_l+i,-i,\dots)\right)\in \mathfrak{S}_{\pm}(\{l+1,\dots ,n\})$$
and the Sylow $d$-torus of $({\bf T},v^hF_q)$ is one of $({\bf G},v^hF_q)$. In particular,
$\rho(\C_{\V}(v^h))=\C_{\W}(w^{\rho(h)})$.

Let $\Levi$ be a $d$-split Levi subgroup for $({\bf G},v^hF_q)$ whose fixed point subgroup is of type $A_1(q^{d_0})^{t_l} B_m(q)$.
Then the root system $\Phi_\Levi$ of $\Levi$ must be $w^{\rho(h)}$-stable.
Hence after suitable conjugation in ${\bf G}^{v^hF_q}$, and by \cite[Remark 3.12]{BroughC}, it can be assumed that 
$$ 
\Phi_{\bf L}=\big\{ \pm e_i,\pm e_i\pm e_j\mid l+1\leq  i,j\leq n \big\} \sqcup \bigsqcup_{i=1}^{t_l} \big\{\pm(e_{2i}-e_{2i-1}) \big\}. 
$$
We denote $\V_{l}:=\langle \n_{\alpha}(1) \mid \alpha \in \Phi \cap \langle e_{i} \mid i \leq l +1 \rangle \rangle$. One sees that $\V_{l}$ is the extended Weyl group associated to the root system $ \Phi_l:=\Phi \cap \langle e_{i} \mid i \leq l +1 \rangle \rangle$ of type $B_{l}$. 
Set $v_{l,0}:={\bf n}_{\alpha_1}(1)\cdots {\bf n}_{\alpha_{l}}(1)\in \V_{l}$, whose image is $w_{l,0}=(1,2,\dots, l,-1,\dots)$.
Additionally, set $v_l':=(v_{l,0})^{a_l}$, $v_l:=(v_{l,0})^{\frac{2l}{d}}$, $w_{l}':=\rho(v_{l}')$ and $w_l:=\rho(v_{l})$.
In particular, $w_l'=\prod_{i=1}^{a_l}(i,a_l+i,\dots, (d_0-1)a_l+i,-i\dots)$.
Then $v_l^{-1}v^h\in \Levi$ and thus the Levi subgroups $\Levi^{v^hF_q}$ are $\Levi^{v_{l}F_q}$ isomorphic by \cite[Lemma 4.2]{BroughC}.
We will therefore in the following consider the twist $F:=v_l F_q$.

\subsection{Blocks and diagonal automorphism}

We can now use the explicit description of the root system of the Levi subgroup $\Levi$ to deduce some block-theoretic properties.

\begin{lemma}\label{blocktheory}
	Suppose that we are in case 2 of Table \ref{table}.
	Then the blocks of $\mathcal{E}_\ell(G,s)$ are not $\tilde{G}$-stable.
\end{lemma}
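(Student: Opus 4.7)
My plan is to reduce via Corollary \ref{coro}(c) to showing that the $d$-cuspidal character $\lambda$ labelling the block is not $\tilde L^F$-stable, and then to produce an explicit element of $\tilde L^F$ moving $\lambda$ using the $\SL_2(q^{d_0})$-factors that distinguish case 2.

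First, by Corollary \ref{coro}(c) the number of $\tilde G$-conjugates of $b_G(\Levi,\lambda)$ equals $|\tilde L:\tilde L_\lambda|$, so the block is not $\tilde G$-stable exactly when $\tilde L_\lambda\subsetneq\tilde L$. The crucial structural feature of case 2 is the presence of $a/2\geq 1$ direct factors of $\Levi^F$ isomorphic to $\SL_2(q^{d_0})$. Reading the centraliser from the table and matching each $\SL_2(q^{d_0})$-factor with a corresponding central torus factor of $\Levi^\ast$ of order $q^{d_0}+\varepsilon$ (so that their combined contribution to $\C^\circ_{L^\ast}(s)^F$ gives one of the $(q^{2d_0}-1)$ blocks in the table), I would deduce that the projection of $s$ into each dual $\mathrm{PGL}_2(q^{d_0})$-component is a non-central involution whose centraliser is disconnected, with component group $\mathbb Z/2$. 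By the compatibility of Jordan decomposition with direct products, $\lambda$ admits a tensor decomposition along the direct factors of $\Levi^F$, and its restriction to each $\SL_2(q^{d_0})$-factor is therefore one of the two ``exceptional'' characters of $\SL_2(q^{d_0})$ of degree $(q^{d_0}\pm 1)/2$. These two characters are interchanged by the order-two diagonal outer automorphism coming from $\GL_2(q^{d_0})$.

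Next I would exhibit an element of $\tilde L^F$ whose conjugation action on $\Levi^F$ induces this diagonal outer automorphism on some $\SL_2(q^{d_0})$-factor. The regular embedding $\iota\colon\G\hookrightarrow\tilde\G$ yields a nontrivial class in $\tilde L^F/(L^F\,\Z(\tilde L)^F)\cong \Ho^1(F,\Z(\G))\cong \mathbb Z/2$, and a Lang--Steinberg lift of this class to $\tilde L^F$ acts on $\Levi^F$ as a simultaneous diagonal outer automorphism of the simple factors of $[\Levi,\Levi]^F$. In particular its action on the $\SL_2(q^{d_0})$-components is nontrivial, so it swaps the exceptional character on at least one of these factors in the tensor decomposition of $\lambda$ and therefore moves $\lambda$.

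The principal technical obstacle is making this last step rigorous: the $\SL_2(q^{d_0})$-factors arise via restriction of scalars from $\mathbb F_{q^{d_0}}$ to $\mathbb F_q$, so one must trace through the explicit decomposition of $\Levi$ in Section~\ref{structure Levi} and verify, using the extended Weyl group picture of Section~\ref{Sylow}, that the distinguished Lang--Steinberg lift does not act as an inner automorphism on at least one such factor. Once this is established, the existence of an element of $\tilde L^F\setminus\tilde L_\lambda^F$ is immediate, which finishes the proof.
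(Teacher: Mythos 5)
Your reduction via Corollary \ref{coro} and your identification of the involutions with disconnected centralisers in the $\mathrm{PGL}_2$-factors are exactly the paper's starting points, and your observation that the two exceptional characters of $\SL_2(q^{d_0})$ of degree $(q^{d_0}\pm1)/2$ are swapped by the diagonal outer automorphism is morally what the cited result of Malle encapsulates. But the middle of your argument has a genuine gap.

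The isomorphism $\tilde L^F/(L^F\,\Z(\tilde L)^F)\cong \Ho^1(F,\Z(\G))$ is not correct for a proper Levi subgroup. The diagonal automorphisms that $\tilde L$ induces on $L_0=[\Levi,\Levi]^F$ are controlled by the component group of $\Z(\Levi)$ (equivalently of $\Z([\Levi,\Levi])$), not by $\Z(\G)$; for an arbitrary Levi subgroup this quotient can be trivial even when $\Ho^1(F,\Z(\G))$ is not, and vice versa. Whether $\Z(\Levi)$ is disconnected for the Levi subgroups of case 2 is precisely the nontrivial content here, and the paper establishes it by a direct root-lattice computation: it exhibits $\tfrac12\big(\sum_{i\le m}e_i+\sum_i(e_{m+2i-1}-e_{m+2i})\big)\in X(\T)\setminus\mathbb Z\Phi_\Levi$ whose double lies in $\mathbb Z\Phi_\Levi$, so that $X(\T)/\mathbb Z\Phi_\Levi$ has nontrivial $p'$-torsion. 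Your proposal tacitly substitutes a fact about $\G$ for the needed fact about $\Levi$.

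You also leave as a ``principal technical obstacle'' the verification that your Lang--Steinberg lift acts non-innerly on some $\SL_2(q^{d_0})$-factor; that verification is not a loose end but the remaining substance of the lemma, and the restriction-of-scalars structure of these factors makes it delicate. The paper sidesteps the explicit element entirely: once $\Z(\Levi)$ is known to be disconnected, it passes to $L_0^\ast\cong B_m(q)\times\mathrm{PGL}_2(q^{d_0})^{a_l/2}$ with $\C_{L_0^\ast}(s_0)\cong C_{m/2}(q^2)\times C_{q^{d_0}-\varepsilon}^{a_l/2}$, observes that the projection of $s_0$ to each factor is an involution with disconnected centraliser, and invokes \cite[Proposition~4.4]{MalleHZ} together with the equivariance of Jordan decomposition to conclude that $\lambda_0$ is fixed by no nontrivial diagonal automorphism. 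That route avoids exactly the computation you flag as problematic.
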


\begin{proof}
	Keep the notation of the proof of Lemma \ref{structure}.
	By Corollary \ref{coro} it suffices to show that $(\Levi,\lambda)$ is not $\tilde{L}$-stable. We first claim that $\Z(\Levi)$ is disconnected. For this observe that it suffices to prove similar to the proof of \cite[Lemma 3.3]{KKL} that $X(\T)/ \mathbb{Z}\Phi_\Levi$ has non-trivial $p'$-torsion. We observe that $$\mathbb{Z}\Phi_\Levi=\langle \alpha_1,\dots,\alpha_m,\alpha_{m+2},\dots,\alpha_{n-2},\alpha_{n} \rangle= \langle e_1,\dots,e_m,e_{m+1}-e_{m+2},\dots,e_{n-3}-e_{n-2},e_{n-1}-e_n \rangle.$$ By the proof of \cite[Lemma 3.3]{KKL} it follows that $\frac{1}{2} (\sum_{i=1}^m e_i + \sum_{i=1}^{(n-m)/2} e_{m+2i-1}-e_{m+2i}) \in X(\T) \setminus \mathbb{Z}\Phi$ but $\sum_{i=1}^m e_i + \sum_{i=1}^{(n-m)/2} e_{m+2i-1}-e_{m+2i} \in \mathbb{Z} \Phi_\Levi$. Hence, $\Z(\Levi)$ is disconnected.
	
	Since $\tilde{L}/L_0$ is abelian and $\Z(\Levi)$ is disconnected it suffices to show that any $\lambda_0 \in \Irr(L_0 \mid \lambda)$ is not stable under any diagonal automorphism of $L_0$. Now, $L_0^\ast \cong B_m(q) \times \mathrm{PGL}_2(q^{d_0})^{a_l/2}$ while $\C_{L_0^\ast}(s_0) \cong C_{m/2}(q^2) \times C_{q^{d_0}-\varepsilon}^{a_l/2}$. By \cite[Table 4.3.1]{GLS} this proves that the projection of $s_0$ onto each factor of $L_0^\ast$ is an involution with disconnected centralizer. By \cite[Proposition 4.4]{MalleHZ} and Jordan decomposition \cite[Theorem 2.6.22]{GeckMalle} we therefore deduce that the character $\lambda_0$ is not invariant under any diagonal automorphism.
\end{proof}

\begin{remark}\label{rmk}
	Suppose that $(\Levi,\lambda)$ is a $d$-cuspidal pair corresponding to a block in case 2 of Table \ref{table}. Then $\tilde{L}_\lambda=L\Z(\tilde{G})$ by Lemma \ref{blocktheory}. By the proof of Proposition \ref{star}, the quotient $\tilde{L}_{\theta}/\tilde{L}_{\Lambda(\theta)}$ is in bijection with $\Irr(W(\theta)/W(\tilde{\theta}))$ and $\tilde{L}_\theta \leq \tilde{L}_\lambda$ for every $\theta \in \Irr(L,b_L(\lambda))$. We conclude that $W(\theta)=W(\tilde{\theta})$ for every $\theta \in \Irr(L,b_L(\lambda))$. 
\end{remark}

\subsection{The structure of $N/L$}\label{relative Weyl}

Recall that we work with the Frobenius endomorphism $F:=v_{l}F_q$ and the $d$-split Levi subgroup $\Levi$ of $(\G,F)$ with $w_l$-stable root system $\Phi_\Levi$ as given above. Recall that $$\W_{\bf G}({\bf L}):={\rm N}_{\bf G}(\Levi)/\Levi\cong {\rm N}_{W_{\bf G}}(\W_{\Levi})/\W_\Levi \text{ and }{\rm N}_{\bf G}(\Levi)^{v_l F_q}/\Levi^{v_l F_q}\cong {\rm C}_{\W_{\bf G}({\bf L})}(w_l \W_\Levi).$$

For $1\leq i\leq {t_l}$ set $$w_{l,i}':=(2i-1,a_l+2i-1,\dots, (d_0-1)a_l+2i-1,-(2i+1),\dots)(2i,a_l+2i,\dots, (d_0-1)a_l+2i,-(2i),\dots)$$ so that $w_l'=\prod_{i=1}^{t_l} w_{l,i}'$.
Additionally, for $1\leq i\leq {t_l}-1$, set $$\tau_i:=\prod_{k=0}^{d_0-1}  \big( (2i-1,2i+1)(2i, 2i+2)(-(2i-1),-(2i+1))(-(2i),-(2i+2)) \big)^{(w_l')^k}.$$
Then $$W':=\big( \prod_{i=1}^{t_l}\langle w_{l,i}'\rangle\big)\rtimes \langle \tau_i\mid 1\leq i\leq {t_l}-1\rangle\leq {\rm C}_{\W_{l}}(w_l)$$ whose image in $\W_{\bf G}({\bf L})$, that is $W'\W_\Levi/\W_\Levi$, is ${\rm C}_{\W_{\bf G}({\bf L})}(w_l \W_\Levi)\cong C_{2d_0}\wr \mathfrak{S}_{t_l}$ (as in \cite[Section 4]{BroughC}).
Our aim in the next sections is to construct a supplement $V' \leq  {\rm N}_{\bf G}(\Levi)$
 with $\rho(V')=W'$. By construction, the subgroup $W'$ is contained in $\W_{l}$. Therefore, we will construct the subgroup $V'$ inside $\V_{l}^{v_l F_q}$. Note that $v_l$ is by construction a Sylow $d$-twist and $d$ is a regular number for the root system $B_{l}$, i.e. $d \mid 2l$. Therefore, we are precisely in the situation already considered in \cite[Section 5.A]{TypeB}.

\section{The extended Weyl group and $d$-twists}

In this section, we recall the main construction from \cite[Section 5]{TypeB} which we will then modify later.
We keep the notation of the previous section. In particular, we let $\V_{l}$ be the extended Weyl group associated to a root sytem $\Phi_l$ of type $B_l$. Let $\rho: \V_l \to \W_l$ be the natural epimorphism to the Weyl group with kernel $\bH_l$. By construction in the last section, the integer $d$ divides $2l$ and thus $a_l:=\frac{l}{d_0}$ is an integer. We let $v_l \in \V_l$ be a Sylow $d$-twist as in \ref{structure Levi} (see also \cite[Section 5.A]{TypeB}) and denote $H_l:=\bH_l^{v_lF_q}$ and $V_l:=\V_l^{v_l F_q}$.

For a subset $\mathcal{O} \subset \{ 1,\dots,l \}$ we define $\Phi_{\mathcal{O}}:=\Phi_l\cap {\langle\pm e_i\mid i\in \mathcal{O}\rangle }$ and $\V_{\mathcal{O}}:=\langle \n_\alpha (\pm 1)\mid \alpha\in \Phi_{\mathcal{O}} \rangle$ and $\bH_{\mathcal{O}}:=\V_{\mathcal{O}} \cap \bH$.


\subsection{The group $H_l$.} 

Set $\oo v_l:=\ov\rho (v_l)$ and for $k \in \{1,\dots,l\}$ let $\mathcal{O}_k$ be the $\oo v_l$-orbit on $\{1,\dots ,l\}$ containing $k$.
Let $\varpi\in\mathbb{F}$ be a fourth root of unity and $$h_0:=\h_{e_1}(-1).$$ For $1\leq k\leq {a}_l$ let $$h_k:=\prod_{i\in \mathcal{O}_k}\h_{e_i}(\varpi).$$ By the Chevalley relations the following equalities hold:
\begin{align}
	h_k ^2= \h_{e_1}(-1)^{|\mathcal{O}_k|}=h_0^{|\mathcal{O}_k|}\label{eq10.1}
	&\text{ and }\\
	h_k^v= \begin{cases} h_k & \text{if } 2\nmid d\\
		h_0h_k& \text{otherwise.} \end{cases}&
\end{align}
By \cite[Section 5.B]{TypeB}, we have
$$H_l=\langle h_0\rangle \times \langle h_1h_2\rangle \times\cdots\times \langle h_{a_l-1}h_{a_l}\rangle,$$ 
an elementary abelian $2$-group of rank $a_l$.
\subsection{The group $\V_l$ and some of its elements.}\label{elements}
We define $\ov c_1 \in \C_{\W_l}({\rho(v_l)})$ as in \cite[Section 5.B]{TypeB}.
If $2\mid d$ let $$\ov c_1:=
(1,a_l+1,\ldots, a_l (d_0-1)+1, -1, -a_l-1,\ldots, -a_l (d_0-1)-1 )\in \C_{\W_l}({\rho(v_l)}).$$ 
This is the cycle of $\rho(v)$ containing $1$. If $2\nmid d$ let 
\begin{align*}
	\ov c_1':=
	&(\,\,\,1,&\,2a_l+1,&\,\,\,\, 4a_l+1,& \ldots, \,(d-1)a_l+1,& -a_l-1,&\ldots, -(d-2)a_l-1 )&\\
	&(- 1,&- 2a_l-1,&-4a_l-1,& \ldots, -(d-1)a_l-1,& a_l+1,&\ldots, (d-2)a_l+1 )&
	\in \C_{\W_l}(\rho(v)),
\end{align*} 
and set $\ov c_1:=\ov c_1'\prod_{i=0}^{d-1 } (ia_l+1,-ia_l-1)$. 
%

Since $\rho(\V_l)=\W_l$ there exists some $c_1\in \V_l$ with $\rho(c_1)=\overline{c}_1$. As in \cite[Section 5.B]{TypeB} one can even choose $c_1$ such that $c_1\in(\V_{\mathcal{O}_1})^{v_lF_q}$ and $ (\V_{\mathcal{O}_1}\cap V_l)\C_{\bH_{\mathcal{O}_1}}(v_l)= \langle h_1,h_0,c_1 \rangle$.


As in \cite[Section 5.B]{TypeB}, let $p_k:=\prod_{i=0}^{d_0-1}\n_{\alpha_{k+1}}(1)^{v_l^i}$ for $1\leq k \leq a_l-1$, which satisfy
$$
\rho(p_k)(\mathcal{O}_k)=\mathcal{O}_{k+1} \text{ and }\rho(p_k)(\mathcal{O}_{k+1})=\mathcal{O}_{k},$$ 
as well as $p_k\in V_l$ and $p_k \in \V_{\mathcal{O}_k\cup\mathcal{O}_{k+1}}$.
The elements $p_k$ satisfy the type $A$ braid relations and we have $p_k^2=h_k h_{k+1} h_0^{|\mathcal{O}_k|}\in H_l$. 

%
For $2\leq k \leq a_l$ let $c_k:=(c_1)^{p_1 \cdots p_{k-1}}$.
Since $\rho(\V_l)=\langle \ov c_i \mid 1\leq i\leq a_l \rangle \rtimes \mathfrak{S}_{a_l}$ we see that the elements $c_i$ ($1\leq i \leq a_l$) together with the elements $p_k$ ($1\leq k < a_l$) generate the group $\V_l$.

\section{A supplement of the relative Weyl group}

We will now modify the construction in the last section to construct a supplement of the relative Weyl group.

First, there is a set theoretic map
$$\begin{array}{ccc}
	\iota_1:\langle {{\bf n}_{\alpha_{2}}(1)},\dots, {{\bf n}_{\alpha_{a_l}}(1)}\rangle & \rightarrow & \langle p_1,\dots, p_{a_l-1}\rangle,\\
	n & \mapsto & \prod_{k=0}^{d_0-1} n^{v_l^k},
\end{array}
$$
and by definition $\iota_1(\n_{\alpha_k}(1))=p_{k-1}$ for $2 \leq k \leq a_l$. 
Next we define two elements 
$$
g_1:=\prod_{i=1}^{t_l} p_i^{p_{i+1}\dots p_{2i-2}},\indent \text{ and } \indent g_2:= \prod_{i=1}^{t_l} p_i^{p_{i+1}\dots p_{2i-1}},
$$
which yields another map
$$
\begin{array}{ccc}
	\iota_2:\langle p_1,\dots, p_{{t_l}-1}\rangle & \rightarrow &  \langle p_1,\dots,p_{a_l} \rangle,\\
	p & \mapsto &  p^{g_1}p^{g_2}.\\
\end{array}
$$

\begin{lemma}\label{homomorphism}
	The maps $\iota_1$ and $\iota_2$ are injective group homomorphisms. 
	Moreover, for $1\leq i\leq {t_l}-1$, the elements $p_i':=\iota_2(p_i)$ satisfy the type $A$ Braid relations and $(p_i')^2=h_{2i-1}h_{2i}h_{2i+1}h_{2i+2}$.
\end{lemma}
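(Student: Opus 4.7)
The plan is to handle the homomorphism, injectivity, braid, and square-formula claims in turn, exploiting the orbit structure of $\bar v_l$ on $\{1,\dots,l\}$ and the Chevalley commutator relations.

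For $\iota_1$, since $\{1,\dots,a_l\}$ is a transversal for the $\bar v_l$-orbits, the translates $\bar v_l^k(\{1,\dots,a_l\})$ for $k=0,\dots,d_0-1$ are pairwise disjoint and partition $\{1,\dots,l\}$. For $n\in\V_{\{1,\dots,a_l\}}=\langle \n_{\alpha_2}(1),\dots,\n_{\alpha_{a_l}}(1)\rangle$, each $n^{v_l^k}$ lies in $\V_{\bar v_l^k(\{1,\dots,a_l\})}$, and these extended Weyl subgroups commute elementwise, their underlying root subsystems being pairwise orthogonal. Hence the reordering
\begin{equation*}
\iota_1(nm)=\prod_{k=0}^{d_0-1}n^{v_l^k}m^{v_l^k}=\prod_{k=0}^{d_0-1}n^{v_l^k}\prod_{k=0}^{d_0-1}m^{v_l^k}=\iota_1(n)\iota_1(m)
\end{equation*}
shows $\iota_1$ is a homomorphism, and injectivity follows because the direct-product decomposition of the image identifies $n=n^{v_l^0}$ uniquely.

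For $\iota_2$, the crux is to show that $g_1$ and $g_2$ carry $\langle p_1,\dots,p_{t_l-1}\rangle$ into two extended Weyl subgroups of $\V_l$ whose underlying root subsystems are mutually orthogonal. Concretely, I will verify that $g_1$ maps the domain into $\V_{\mathcal O_{\mathrm{odd}}}$ and $g_2$ into $\V_{\mathcal O_{\mathrm{even}}}$, where $\mathcal O_{\mathrm{odd}}:=\mathcal O_1\cup\mathcal O_3\cup\cdots\cup\mathcal O_{2t_l-1}$ and $\mathcal O_{\mathrm{even}}:=\mathcal O_2\cup\mathcal O_4\cup\cdots\cup\mathcal O_{2t_l}$. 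At the level of orbit labels this reduces to a purely combinatorial statement in $\mathfrak S_{a_l}$, proved by induction on $t_l$ using the recursive definition of $g_1,g_2$ together with the type $A$ braid relations. Granted this, $\mathcal O_{\mathrm{odd}}$ and $\mathcal O_{\mathrm{even}}$ are disjoint, so $\V_{\mathcal O_{\mathrm{odd}}}$ and $\V_{\mathcal O_{\mathrm{even}}}$ commute elementwise; in particular $[q^{g_1},p^{g_2}]=1$ for all $p,q$ in the domain. Expanding
\begin{equation*}
\iota_2(pq)=p^{g_1}q^{g_1}p^{g_2}q^{g_2}=p^{g_1}p^{g_2}q^{g_1}q^{g_2}=\iota_2(p)\iota_2(q)
\end{equation*}
gives the homomorphism property, and injectivity follows since $\iota_2(p)=1$ forces $p^{g_1}=(p^{g_2})^{-1}\in\V_{\mathcal O_{\mathrm{odd}}}\cap\V_{\mathcal O_{\mathrm{even}}}=\{1\}$, hence $p=1$.

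The braid relations and square formula then follow formally. The type $A$ braid relations among $p_1,\dots,p_{t_l-1}$ are transported by the homomorphism $\iota_2$ to the corresponding relations among $p_1',\dots,p_{t_l-1}'$. For the square, applying $\iota_2$ to the identity $p_i^2=h_ih_{i+1}h_0^{d_0}$ and using that $\bH_l$ is abelian together with $h_0$ being fixed by Weyl conjugation yields $(p_i')^2=(h_ih_{i+1})^{g_1}(h_ih_{i+1})^{g_2}h_0^{2d_0}$. The orbit-level action of $g_1$ (sending $h_i\mapsto h_{2i-1}$ and $h_{i+1}\mapsto h_{2i+1}$) and of $g_2$ (sending $h_i\mapsto h_{2i}$ and $h_{i+1}\mapsto h_{2i+2}$), combined with $h_0^{2d_0}=1$ since $h_0$ has order $2$, then produces $h_{2i-1}h_{2i+1}h_{2i}h_{2i+2}=h_{2i-1}h_{2i}h_{2i+1}h_{2i+2}$ after rearrangement in the abelian group $\bH_l$. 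The main obstacle throughout is the structural claim on the orbit-level action of $g_1$ and $g_2$ in $\mathfrak S_{a_l}$; this is the key combinatorial input, and once verified by induction, all other parts of the lemma follow formally.
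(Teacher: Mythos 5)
Your overall strategy mirrors the paper's: establish commutativity of the pieces $n^{v_l^k}$ (resp.\ $p^{g_1}, p^{g_2}$) by appealing to orthogonality of the roots that show up, then transport the braid and square relations through the resulting homomorphisms. The injectivity argument for $\iota_2$ you give (trivial intersection of two sub-Tits-groups) is in fact a bit cleaner than the paper's detour through $H_l\cap\langle p_1,\dots,p_{t_l-1}\rangle$, and your square computation is essentially the paper's.

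However, there is a genuine gap in the commutativity step. You write that ``these extended Weyl subgroups commute elementwise, their underlying root subsystems being pairwise orthogonal,'' and you identify $\V_{\{1,\dots,a_l\}}$ with $\langle \n_{\alpha_2}(1),\dots,\n_{\alpha_{a_l}}(1)\rangle$. Both are incorrect as stated. First, $\V_{\{1,\dots,a_l\}}$ is by definition the full type-$B_{a_l}$ Tits subgroup (it contains $\n_{e_j}(\pm1)$ for short roots $e_j$), whereas $\langle \n_{\alpha_2}(1),\dots,\n_{\alpha_{a_l}}(1)\rangle$ is only the type-$A_{a_l-1}$ sub-Tits-group. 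Second, and more importantly, orthogonality of the root subsystems is not sufficient for the two Tits subgroups to commute elementwise in type $B$: for $i\in\mathcal O$ and $j\in\mathcal O'$ the short roots $e_i,e_j$ are orthogonal but $e_i+e_j$ \emph{is} a root, and the Chevalley relations do not then give $[\n_{e_i}(1),\n_{e_j}(1)]=1$ for free. This is exactly why the paper's proof repeatedly emphasizes ``as both are long roots'' before concluding that the relevant $\n$-elements commute. Your argument goes through only because every element in sight is actually a product of $\n_\alpha(1)$ for long type-$A$ roots $e_i-e_j$, and for orthogonal long roots of this form the sum is not a root; you need to say this explicitly rather than invoking a blanket commutativity of $\V_{\mathcal O}$ with $\V_{\mathcal O'}$.

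A secondary gap is that the crucial combinatorial claim — that $g_1$ carries the domain into the (type-$A$) sub-Tits-group supported on $\mathcal O_{\mathrm{odd}}$ and $g_2$ into the one on $\mathcal O_{\mathrm{even}}$ — is deferred to an ``induction on $t_l$'' which you do not carry out. The paper instead states and uses the explicit formulas for $\rho(g_1)$ and $\rho(g_2)$ as products of transpositions $(ka_l+i,\,ka_l+2i-1)$ and $(ka_l+i,\,ka_l+2i)$; these make the parity split manifest and also guarantee (since they are pure permutations without sign changes) that the conjugates $h_i^{g_1}=h_{2i-1}$, $h_{i+1}^{g_1}=h_{2i+1}$, $h_i^{g_2}=h_{2i}$, $h_{i+1}^{g_2}=h_{2i+2}$ hold \emph{on the nose} with no $h_0$-correction. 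You should either cite these explicit forms or actually prove the inductive claim before leaning on it, and you should note why no sign corrections arise in the $h_i$-conjugations.
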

\begin{proof}
	
	To show that $\iota_1$ is a homomorphism it suffices to check that  $\iota_1(\n_{\alpha_i}(1) \n_{\alpha_j}(1))=\iota_1(\n_{\alpha_i}(1)) \iota_1(\n_{\alpha_j}(1))$ for $2 \leq i,j \leq l$.
	
	First note for $2\leq i\leq a_l$ and $0\leq k\leq d_0-1$ that ${{\bf n}_{\alpha_{i}}(1)}^{(v_l')^{k}}\in \langle {\bf n}_{e_{ka_l+i+1}-e_{ka_l+i}}(1) \rangle$.
	As $a_l\geq 2$ we have $e_{ka_l+i+1}-e_{ka_l+i}\perp e_{k'a_l+i+1}-e_{k'a_l+i}$ for $k \neq k'$.
	Moreover, as both are long roots, it follows that $[{\bf n}_{e_{ka_l+i+1}-e_{ka_l+i}}(1),{\bf n}_{e_{k'a_l+i+1}-e_{k'a_l+i}}(1)]=1$.
	In particular, we have $p_i\in \prod_{k=0}^{d_0-1}\langle {\bf n}_{ e_{ka_l+i+1}-e_{ka_l+i}}(1)\rangle$.
	
	For $0\leq k< k'\leq d_0-1$, then $e_{ka_l+i+1}-e_{ka_l+i}\perp e_{k'a_l+j+1}-e_{k'a_l+j} $ and both are long roots.
	Hence as before
	${{\bf n}_{\alpha_{i}}(1)}^{(v_l')^{k'}}{{\bf n}_{\alpha_{j}}(1)}^{(v_l')^k}={{\bf n}_{\alpha_{j}}(1)}^{(v_l')^k}{{\bf n}_{\alpha_{i}}(1)}^{(v_l')^{k'}}$.
	Thus $\iota_1$ is a homomorphism. Moreover, $\iota_1$ is injective as $\prod_{k=0}^{d_0-1}\langle {\bf n}_{ e_{ka_l+i+1}-e_{ka_l+i}}(1)\rangle \cap \prod_{k=0}^{d_0-1}\langle {\bf n}_{ e_{ka_l+j+1}-e_{ka_l+j}}(1)\rangle=1$ for $i \neq j$.
	
	Observe by construction that 
	$$
	\rho(g_1)=\prod_{i=1}^{t_l}\left( \prod_{k=0}^{d_0-1}(ka_l+i,ka_l+2i-1)\right),\indent \text{ and } \indent \rho(g_2)= \prod_{i=1}^{t_l} \left( \prod_{k=0}^{d_0-1}(ka_l+i,ka_l+2i)\right)
	$$
	and so for $1\leq i\leq {t_l}-1$, 
	$$
	p_i^{g_1}\in \langle {\bf n}_{\alpha}(1)\mid \alpha=e_{ka_l+2i-1}-e_{ka_l+2i+1} \text{ with } 0\leq k\leq d_0-1\rangle
	$$
	and 
	$$
	p_i^{g_2}\in \langle {\bf n}_{\alpha}(1)\mid \alpha=e_{ka_l+2i}-e_{ka_l+2i+2} \text{ with } 0\leq k\leq d_0-1\rangle.
	$$
	As in the previous case, the long roots $e_{ka_l+2i-1}-e_{ka_l+2i+1}$ and $e_{k'am+2j}-e_{k'a_l+2j+2}$ orthogonal and it follows that $[p_i^{g_1},p_j^{g_2}]=1$.
	It follows from this that $\iota_2$ is a homomorphism.
	
	To compute the formula for $\iota_2(p_i)^2$ we know that $p_i^2=h_ih_{i+1}h_0^{d_0}$ by \ref{elements} and thus $\iota_2(p_i)^2=h_{2i-1}h_{2i}h_{2i+1}h_{2i+2}$. Since $\iota_2$ is injective modulo $H_l$ and $H_l \cap \langle p_1,\dots p_{t_l-1} \rangle =\langle p_1^1,\dots p^2_{t_l-1}\rangle$ it follows from this formula that $\iota_2$ itself must be injective.
\end{proof}

\begin{corollary}
	For $c_1\in (V_{\mathcal{O}_1})^{v_lF_q}$ as constructed above and $V':=\langle c_1 c_1^{p_1},p_i'\mid 1\leq i\leq {t_l}-1\rangle$, we have ${\rm N}_{G}(\Levi)=LV'$.
\end{corollary}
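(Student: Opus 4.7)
The plan is to verify the claim $\N_G(\Levi) = LV'$ by establishing two things: (1) $V' \subseteq \N_\G(\Levi)^F$, and (2) the image $\rho(V') \W_\Levi / \W_\Levi$ coincides with $W' \W_\Levi / \W_\Levi$. Since Section~\ref{relative Weyl} identifies $W' \W_\Levi / \W_\Levi$ with $\C_{\W_\G(\Levi)}(w_l \W_\Levi) = \N_\G(\Levi)^F / \Levi^F = \N_G(\Levi)/L$, these two facts together imply the desired equality.

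For (1), the first observation is that every generator of $V'$ lies in $V_l := \V_l^{v_l F_q}$: by construction $c_1 \in \V_{\mathcal{O}_1}^{v_l F_q}$, each $p_k \in V_l$, and hence $g_1, g_2 \in V_l$ and $p_i' = p_i^{g_1} p_i^{g_2} \in V_l$; likewise $c_1^{p_1} \in V_l$. So all generators are $F$-fixed and normalize $\T$. To see that they also normalize $\Levi$, it suffices to check preservation of the $A_1^{t_l}$-roots $\bigsqcup_{i=1}^{t_l}\{\pm(e_{2i} - e_{2i-1})\}$ of $\Phi_\Levi$, since $\V_l$ acts trivially on the sublattice spanned by $\{e_j : j > l\}$ that contains the $B_m$-component. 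For $\rho(p_i') = \tau_i$, which follows from Lemma~\ref{homomorphism} and its proof, this is clear, as $\tau_i$ permutes the pairs $\{2j-1, 2j\}$. For $c_1 c_1^{p_1}$, one uses that $\rho(c_1)$ is supported on $\mathcal{O}_1 \cup (-\mathcal{O}_1)$ while $\rho(c_1)^{\rho(p_1)}$ is supported on $\mathcal{O}_2 \cup (-\mathcal{O}_2)$, and checks that on a root $e_{2i} - e_{2i-1}$ the product yields another root of the same shape.

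For (2), the proof of Lemma~\ref{homomorphism} shows $\rho(p_i') = \tau_i$, and it remains to identify $\rho(c_1 c_1^{p_1})$ with $w'_{l,1}$ modulo $\W_\Levi$. In the case $d$ even this is immediate: $\bar{c}_1$ is by definition the cycle of $w_l'$ containing $1$, hence $\bar{c}_1^{\rho(p_1)}$ is the one containing $2$, and their product is exactly $w'_{l,1}$. In the case $d$ odd, the two sign-change corrections $\prod_{i=0}^{d-1}(ia_l+1, -ia_l-1)$ appearing in $\bar{c}_1$ and its $\rho(p_1)$-conjugate must be tracked through the multiplication and shown to combine, modulo $\W_\Levi$, into the correct coset. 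Once this is settled, the remaining cyclic generators $w'_{l,i}$ for $i \geq 2$ of $W'$ arise as conjugates $(w'_{l,1})^{\tau_1 \cdots \tau_{i-1}}$, which are therefore in $\rho(V') \W_\Levi$. This gives $\rho(V') \W_\Levi / \W_\Levi \supseteq W' \W_\Levi / \W_\Levi$, and the reverse containment is automatic from (1).

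The principal technical obstacle is the odd-$d$ case of step (2): the sign-change corrections in the definition of $\bar{c}_1$ make the identification $\rho(c_1 c_1^{p_1}) \equiv w'_{l,1} \pmod{\W_\Levi}$ less transparent than in the even case, and require careful bookkeeping to absorb into $\W_\Levi$ or to combine into a power of $w'_{l,1}$.
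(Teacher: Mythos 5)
Your approach coincides with the paper's: show that the generators of $V'$ lie in $V_l$ (hence are $F$-fixed, normalize $\T$, and — once their images are known — permute $\Phi_\Levi$), then identify $\rho(V')\W_\Levi/\W_\Levi$ with $W'\W_\Levi/\W_\Levi=\C_{\W_{\bf G}({\bf L})}(w_l\W_\Levi)$, which is all of $N/L$ by \S\ref{relative Weyl}. The paper simply asserts $\rho(c_1c_1^{p_1})=w_{l,1}'$ and $\rho(p_i')=\tau_i$ as read off from the definitions; you supply a justification of the second via Lemma~\ref{homomorphism} and verify the first when $d$ is even.

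The odd-$d$ case of the first identity, which you explicitly leave open, is however a genuine gap rather than a deferrable nuisance, and you rely on it in step (1) as well as step (2): knowing only that $\rho(c_1)$ and $\rho(c_1)^{\rho(p_1)}$ are supported on $\mathcal{O}_1\cup(-\mathcal{O}_1)$ and $\mathcal{O}_2\cup(-\mathcal{O}_2)$ does not by itself tell you that the image of $e_2-e_1$ lands in $\Phi_\Levi$; one must know that $1$ and $2$ are sent to $\pm(ka_l+1)$ and $\pm(ka_l+2)$ with a common $k$ and consistent signs. If you carry out the computation for odd $d$ (so $d_0=d$, $w_l=(w_l')^2$ and $\bar c_1=\bar c_1'\prod_{i=0}^{d-1}(ia_l+1,-ia_l-1)$), you will find that $\bar c_1'$ equals the restriction of $w_l$ to $\mathcal{O}_1\cup(-\mathcal{O}_1)$, the additional sign change centralizes it, and $\bar c_1$ comes out as the \emph{inverse} of the $2d_0$-cycle of $w_l'$ through $1$; thus $\rho(c_1c_1^{p_1})=(w_{l,1}')^{-1}$, which differs from the paper's assertion by an inverse but of course generates the same cyclic subgroup, so the corollary still follows. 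In a complete argument this bookkeeping needs to be done rather than flagged as an obstacle.
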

\begin{proof}
	Note that by construction in \cite[Section 5.B]{TypeB}, the elements $c_1$ and $p_i$ with $1\leq i\leq a_l-1$ all lie in $V_l$.
	Moreover $\rho(c_1  c_1^{p_1})=w_{l,1}'$ and $\rho(p_i')=\tau_i$ as defined in \ref{relative Weyl}.
	Hence $LV'\leq {\rm N}_{G}(\Levi)$ with $LV'/L = {\rm N}_{G}(\Levi)/L$ showing the required equality.
\end{proof}

Mimicking the definition in \cite[Section 5.B]{TypeB}, for $1\leq i\leq t_l$ we define $c_i':=(c_1  c_1^{p_1})^{p_1'p_2'\cdots p_{i-1}'}$, which satisfy $\rho(c_i')=w_{l,i}'$.
Additionally, set $C':=\langle c_i'\mid 1\leq i\leq t_l\rangle$ and $P':=\langle p_i'\mid 1\leq i\leq t_l-1\rangle$.

Then $c_i'\in \langle {\bf n}_{\alpha}(1)\mid \alpha\in \Phi\cap \langle \pm e_k\mid k \in \mathcal{O}_{2i-1} \cup \mathcal{O}_{2i} \rangle \rangle=\V_{\mathcal{O}_{2i-1} \cup \mathcal{O}_{2i}} $. We claim that

$$ (c_i')^{p_j'}= \left\{
\begin{array}{lr}
	c_i' &\text{if } j\notin \{i-1,i\},\\
	c_{i+1}' & \text{if } j=i,\\
	c_{i-1}' & \text{if } j=i-1.\\
\end{array}
\right.
$$
The case $j \notin \{i-1,i\}$ is immediate, while when $j=i-1$ there is $h\in H_l$ such that $p_1'\cdots p_{i-2}'(p_{i-1}')^2=hp_1'\cdots p_{i-2}'$ and by construction $[c_1',H_l]=1$, hence $(c_i')^{p_{i-1}'}=(c_1')^{p_1'\cdots p_{i-2}'}=c_{i-1}'$.

\begin{lemma}\label{abelian}
	The group $C'$ is abelian and is the central product of the groups $\langle c_i' \rangle$, $i=1,\dots, t_l$, over $\langle h_0\rangle$, where $(c_i')^{2d_0}=h_0$.
\end{lemma}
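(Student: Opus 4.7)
The plan is to establish, in order, three assertions which together give the lemma: (i) the power relation $(c_i')^{2d_0}=h_0$ for every $i$; (ii) the commutativity $[c_i',c_j']=1$ whenever $i\neq j$; and (iii) the intersection $\langle c_i'\rangle\cap\langle c_j'\rangle=\langle h_0\rangle$ for $i\neq j$. The central product description of $C'$ over $\langle h_0\rangle$ follows formally from these three assertions.

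For (i), the $c_i'$ are pairwise $V_l$-conjugate through the elements $p_1'\cdots p_{i-1}'$ appearing before the lemma, so it suffices to verify the relation for $c_1'$. First I would note that the identity $h_k^2=h_0^{|\mathcal{O}_k|}$ combined with $h_k=\prod_{i\in\mathcal{O}_k}\h_{e_i}(\varpi)$ forces $\h_{e_i}(-1)=h_0$ for every $i$, and hence $h_0$ is fixed under every conjugation by an element of $\V_l$ (since these only permute the $\h_{e_i}(-1)$); in particular $h_0\in\Z(\V_l)$, so the value of $(c_i')^{2d_0}$ is invariant under conjugation by the $p_j'$'s. Setting $c_2:=c_1^{p_1}$ so that $c_1'=c_1c_2$, I would then invoke the analogous computation of $c_1^{2d_0}$ from \cite[Section 5.B]{TypeB} together with the Chevalley commutation relations for root elements indexed by the disjoint orbits $\mathcal{O}_1$ and $\mathcal{O}_2$ and expand $(c_1c_2)^{2d_0}$, collecting individual powers and commutator contributions to reach $h_0$.

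For (ii), by conjugacy I may assume $i=1$, and I write $c_1'=c_1c_2$ and $c_j'=c_{2j-1}c_{2j}$ with each factor lying in the corresponding $\V_{\mathcal{O}_k}$. Since the orbits $\mathcal{O}_1,\mathcal{O}_2,\mathcal{O}_{2j-1},\mathcal{O}_{2j}$ are pairwise disjoint, the Chevalley relations place each commutator $[c_a,c_b]$ for $a\in\{1,2\}$, $b\in\{2j-1,2j\}$ inside $\bH_l$; the only non-trivial contributions arise from pairs of short roots $\pm e_r,\pm e_s$ with $r$ and $s$ in distinct orbits (for all other pairs the root sums are not roots and the commutators vanish), and these produce powers of $h_0$. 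The crux is that these four commutator contributions to $[c_1c_2,c_{2j-1}c_{2j}]$ cancel in pairs, which is precisely the conceptual reason for replacing $c_1$ by the paired element $c_1c_1^{p_1}$. This cancellation I would verify by tracking the exponents of $h_0$ produced by each short-root commutator using the explicit cyclic action of $c_1$ on $\mathcal{O}_1$.

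For (iii), the inclusion $c_i'\in\V_{\mathcal{O}_{2i-1}\cup\mathcal{O}_{2i}}$ combined with disjointness of the orbits gives $\V_{\mathcal{O}_{2i-1}\cup\mathcal{O}_{2i}}\cap\V_{\mathcal{O}_{2j-1}\cup\mathcal{O}_{2j}}\subseteq\bH_l$, so $\langle c_i'\rangle\cap\langle c_j'\rangle\subseteq\bH_l\cap\langle c_i'\rangle$; by (i) together with $h_0^2=1$ the cyclic group $\langle c_i'\rangle$ has order $4d_0$ and meets $\bH_l$ precisely in $\langle(c_i')^{2d_0}\rangle=\langle h_0\rangle$. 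The main obstacle will be the bookkeeping in step (ii): while the cancellation admits a clean conceptual explanation (the pairing is designed exactly to kill the obstructing short-root $h_0$-commutators), verifying it requires careful accounting with the structure constants controlling $[\n_\alpha(1),\n_\beta(1)]$ for orthogonal short-root pairs in type $B_l$ together with the precise cyclic action of $c_1$ on the coordinates in $\mathcal{O}_1$.
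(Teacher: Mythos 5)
Your three-step outline is sound, and the conclusion would follow from (i)--(iii), but you are proposing to carry out all the cancellations by explicit Chevalley-constant bookkeeping, which is precisely what the paper avoids. The single fact that makes everything collapse to a short abstract computation is $[c_1,c_2]=h_0$, already recorded in \cite[Section 5.B]{TypeB} alongside $c_1^{2d_0}\in\langle h_0\rangle$. Once you know that $[c_1,c_2]$ is the \emph{central} element $h_0$ of order $2$, step~(i) is immediate: $(c_1c_2)^m=h_0^{\binom{m}{2}}c_1^mc_2^m$ by a trivial induction, and for $m=2d_0$ the binomial exponent $d_0(2d_0-1)$ is odd (here the hypothesis that $d_0$ is odd enters), while $c_1^{2d_0}c_2^{2d_0}=(c_1^{2d_0})(c_1^{2d_0})^{p_1}=(c_1^{2d_0})^2=1$. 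No expansion of structure constants is needed. Likewise for step~(ii): the paper first observes $c_j'\in\langle c_1,\dots,c_{a_l}\rangle$, so $[c_1,c_j']\in\langle h_0\rangle$, and then uses the commutator identity together with $[p_1,c_j']=1$ to get $[c_1',c_j']=[c_1,c_j']^{c_2}[c_1,c_j']^{p_1}=[c_1,c_j']^2=1$, since $\langle h_0\rangle$ is central of order $2$. Your conceptual remark that "the pairing kills the obstructing $h_0$-commutators" is exactly the squaring effect, but framing it as a structure-constant cancellation obscures what is really a two-line group-theory computation.

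Two smaller points. First, in step~(i) you derive $\h_{e_i}(-1)=h_0$ from the identity $h_k^2=h_0^{|\mathcal{O}_k|}$; the logical order should be reversed: the $\h_{e_i}(-1)$ all coincide because they are each the unique non-trivial element of $\Z(\G)$ in simply connected type $B$, and the identity $h_k^2=h_0^{|\mathcal{O}_k|}$ is a \emph{consequence}. Second, your step~(iii) addressing the intersection $\langle c_i'\rangle\cap\langle c_j'\rangle=\langle h_0\rangle$ is a reasonable completion: the paper's proof of the lemma only exhibits the power relation and abelianness and leaves the central-product conclusion as a formal consequence used in the subsequent order count of Proposition~\ref{intersection}. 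Your argument there (that $c_i'$ has order exactly $4d_0$, meeting $\bH_l$ in $\langle h_0\rangle$, and that disjointness of the orbits forces $\V_{\mathcal{O}_{2i-1}\cup\mathcal{O}_{2i}}\cap\V_{\mathcal{O}_{2j-1}\cup\mathcal{O}_{2j}}\subseteq\bH_l$) is valid and makes explicit what the paper leaves implicit.
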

\begin{proof}
	Recall from \cite[Section 5.B]{TypeB} that $c_1^{2d_0}\in\langle h_0\rangle$.
	As $[c_1,c_2]=h_0$ by \cite[Section 5.B]{TypeB} we see that $(c_1')^{m}=h_0^{1+2+\dots+ m-1} c_1^{m} c_{2}^m$ for $m \in \mathbb{N}$.
	Since $d_0(2d_0-1)$ is odd (as $d_0$ is assumed to be odd) it therefore follows that $(c_1')^{2d_0}=h_0c_{1}^{2d_0} c_2^{2d_0}=h_0$. From this we obtain $(c_i')^{2d_0}=h_0$ for all $i$.
	
	To show $C'$ is abelian consider the commutator $[c_1',c_i']$ for $i > 1$. Observe that $c_i'\in \langle c_j\mid 1\leq j \leq a_l\rangle$ and hence $[c_1,c_i']\in \langle h_0\rangle$.
	Moreover, as $i>1$ and thus $[p_1,c_i']=1$, it follows that $$ 
	[c_1',c_i']=[c_1  c_2,c_i'] = [c_1,c_i']^{c_2}[c_2,c_i']=[c_1,c_i']^{c_2}[c_1,c_i']^{p_1}=1.
	$$
	Let $i<j$. 
	By applying the formula for $(c_i')^{p_j'}$ from above, we deduce that $$[c_i',c_j']=[(c_1')^{p_1'\cdots p_{i-1}'},(c_1')^{p_1'\cdots p_{j-1}'}]=[(c_1')^{p_1'\cdots p_{i-1}'},(c_1')^{p_1'\cdots p_{i}'}]^{p_{i+1}'\cdots p_{j-1}'}=[c_1',c_{i+1}']^{p_1'\cdots p_{i-1}'p_{i+1}'\cdots p_{j-1}'}.$$
	Hence, $[c_i',c_j']=1$.
\end{proof}



\begin{proposition}\label{intersection}
	The group $V'$ is a semidirect product $V'=C' \rtimes P'$.
	Moreover, the subgroup $H':=\langle h_0 \rangle \times \langle  p_k'^2 \mid k=1,\dots,t_l-1 \rangle$ is equal to the intersection $V' \cap H$.
\end{proposition}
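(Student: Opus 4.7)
The plan is to work through the epimorphism $\rho: V_l \to W_l$ with kernel $H_l$ and to exploit the wreath product structure $\rho(V') = W' \cong C_{2d_0} \wr \mathfrak{S}_{t_l}$ identified in Section \ref{relative Weyl}.

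First I would establish $V' = C' P'$ with $C' \trianglelefteq V'$. By definition $c_i' = (c_1')^{p_1' \cdots p_{i-1}'}$, so $C' \leq \langle c_1', p_1', \dots, p_{t_l-1}'\rangle = V'$. The computation preceding Lemma \ref{abelian} shows $(c_i')^{p_j'} \in \{c_i', c_{i \pm 1}'\}$, hence $P'$ normalizes $C'$ and $V' = C' P'$. Next I would verify $\rho(C') \cap \rho(P') = 1$: the images $\rho(c_i') = w_{l,i}'$ are products of disjoint $2d_0$-cycles generating the wreath base $\prod_{i=1}^{t_l} \langle w_{l,i}'\rangle$, while each $\rho(p_j') = \tau_j$ is a block transposition $(j,j+1)$ with trivial base component, generating a complementary copy of $\mathfrak{S}_{t_l}$. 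Consequently $C' \cap P' \leq H_l$.

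Then I would pin down each of $C' \cap H_l$ and $P' \cap H_l$ separately. By Lemma \ref{abelian}, $|C'| = 2 (2d_0)^{t_l}$ and $|\rho(C')| = (2d_0)^{t_l}$, so $C' \cap H_l = \langle h_0 \rangle$. For $P' \cap H_l$: Lemma \ref{homomorphism} gives $(p_k')^2 = e_k e_{k+1}$ with $e_k := h_{2k-1} h_{2k}$, and in the direct decomposition $H_l = \langle h_0 \rangle \times \prod_{k=1}^{t_l} \langle e_k \rangle$ these squares generate the parity-zero subgroup of order $2^{t_l - 1}$, disjoint from $\langle h_0 \rangle$. Combined with the type $A$ braid relations (Lemma \ref{homomorphism}), a normal-form argument reduces every element of $P'$ to a unique product of such a square with a chosen lift of an element of $\mathfrak{S}_{t_l}$, giving $|P'| \leq 2^{t_l-1} \cdot t_l!$. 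Since $|\rho(P')| = t_l!$ and $P' \cap H_l \supseteq \langle (p_k')^2 \rangle$, this is simultaneously a lower bound; hence $P' \cap H_l = \langle (p_k')^2 \rangle$.

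To conclude, since $h_0 \notin \prod_k \langle e_k \rangle$ we have $\langle h_0 \rangle \cap P' = 1$, so $C' \cap P' = 1$ and $V' = C' \rtimes P'$. For the intersection claim, any $v \in V' \cap H_l$ factors as $cp$ with $c \in C'$, $p \in P'$; vanishing of the wreath image forces $\rho(c) = \rho(p) = 1$, so $c \in \langle h_0 \rangle$ and $p \in \langle (p_k')^2 \rangle$, and since these live in complementary direct factors of $H_l$ their product is the direct product $H'$. The main technical obstacle is the normal-form/counting step for $|P'|$: one must carefully control how the squaring relations $(p_k')^2 = e_k e_{k+1}$ propagate through the braid moves to verify that no hidden $h_0$-contribution can arise in reduced words.
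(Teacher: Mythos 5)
Your overall strategy is close to the paper's — determine $C'\cap H$ and $P'\cap H$ separately via Lemma \ref{abelian} and the relation $(p_k')^2 = h_{2k-1}h_{2k}h_{2k+1}h_{2k+2}$ from Lemma \ref{homomorphism}, then assemble — but there is a genuine gap in how you pin down $|P'|$, and it comes from under-using Lemma \ref{homomorphism}. The substance of that lemma is that $\iota_1$ and $\iota_2$ are \emph{injective} group homomorphisms, not merely that their images satisfy braid and squaring relations. Injectivity gives $|P'| = |\langle p_1,\dots,p_{t_l-1}\rangle| = |\langle \n_{\alpha_2}(1),\dots,\n_{\alpha_{t_l}}(1)\rangle| = 2^{t_l-1}\, t_l!$ outright, this last group being the type-$A_{t_l-1}$ portion of the extended Weyl group, whose order is standard. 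The normal-form argument you flag as ``the main technical obstacle'', tracking whether hidden $h_0$-contributions survive reduction through braid moves, is therefore both unresolved in your proposal and entirely avoidable: it is precisely the word-combinatorics that the injectivity of $\iota_1,\iota_2$ was built to bypass. With $|P'|$ known, the paper closes with one counting inequality, $|V'| \geq |\rho(V')|\cdot|H'| = |C'|\,|P'| \geq |V'|$ (the last step since $V' = C'P'$), which yields $V'\cap H = H'$ and $C'\cap P' = 1$ simultaneously; your factorization argument $v=cp$ with $\rho(c)=\rho(p)=1$ also works once the orders are established.

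A smaller inaccuracy, harmless for your argument but worth correcting: you assert $H_l = \langle h_0\rangle\times\prod_{k=1}^{t_l}\langle e_k\rangle$, but the decomposition displayed in the paper is $H_l = \langle h_0\rangle\times\langle h_1h_2\rangle\times\cdots\times\langle h_{a_l-1}h_{a_l}\rangle$ with $a_l - 1 = 2t_l-1$ non-central factors, so your product is a proper subgroup of index $2^{t_l-1}$. What you actually need — and what is true — is only that $\langle h_0\rangle,\langle e_1\rangle,\dots,\langle e_{t_l}\rangle$ occupy pairwise distinct direct factors of $H_l$ (the $e_k = h_{2k-1}h_{2k}$ are the odd-indexed factor generators), whence $\langle (p_k')^2 \mid k\rangle$ is the index-two ``sum-zero'' subgroup of $\langle e_1,\dots,e_{t_l}\rangle$ of order $2^{t_l-1}$, disjoint from $\langle h_0\rangle$.
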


\begin{proof}
	First consider $H\cap C'$. By construction $C'\lhd V'$ with $\rho(C')= C_{2d_0}^{a_l/2}$.
	By Lemma \ref{abelian}, it follows that $|C'|\leq 2(2d_0)^{a_l/2}$ and $h_0\in C'\cap H$.
	Hence $|C'|= 2(2d_0)^{a_l/2}$ and $H' \cap C'= \langle h_0 \rangle$.
	
	As $P'=\iota_2\circ \iota_1(\langle {{\bf n}_{\alpha_{2}}(1)},\dots, {{\bf n}_{\alpha_{a_l}}(1)}\rangle)$, it follows that $|P'|=|\mathfrak{S}_{t_l}| 2^{t_l-1}$.
	Moreover, $(p_i')^2\in H$ and $\rho(P')=\mathfrak{S}_{t_l}$.
	Hence $P'\cap H=\langle  p_k'^2 \mid k=1,\dots,t_l \rangle$. It is immediate that $H'\leq V'\cap H$ and so
	$$|V'|\geq |V'/V'\cap H||H'|=2^{t+1}|\mathfrak{S}_{t_l}||\langle (p_i')^2\mid 1\leq i\leq t_l\rangle|=|C'||P'|.$$
	Therefore $H'=V'\cap H$ and $C'\cap P'=1$.
\end{proof}

Note that for $i \geq 1$, the element $h_i h_{i+1}$ lies in the center of $L$ precisely when $i$ is odd. From this it follows that the intersection $H' \cap L \leq \Z(L)$ is contained in the center of $L$.


\section{Extension map for $L \lhd N$}

\subsection{A criterion for an extension map}

The following proposition, see \cite[Proposition 2.2]{TypD}, is useful for constructing extension maps.

\begin{proposition}\label{ext map}
Let $K \lhd M$ be finite groups, let the group
$E$ act on $M$, stabilizing $K$ and let $\mathbb{K} \subset \Irr(K)$ be $ME$-stable. Assume there exist
$E$-stable subgroups $K_0$ and $\hat{V}$ of $M$ such that
\begin{enumerate}[label=(\alph*)]
	\item the groups satisfy:
\begin{enumerate}[label=(\roman*)]
	\item   $K = K_0(K \cap \hat{V} )$ and $\hat{H} := K \cap \hat{V} \leq \Z(K)$,
\item  $M = K \hat{V}$ ;
\end{enumerate}
\item for $\mathbb{K}_0 :=
\cup_{\lambda \in \mathbb{K}}
\Irr(\lambda|_{K_0})$
there exist
\begin{enumerate}[label=(\roman*)]
\item a $\hat{V} E$-equivariant extension map $\Lambda_0$ with respect to $\hat{H} \lhd \hat{V}$ ; and
\item an $\varepsilon( \hat{V} )E$-equivariant extension map $\Lambda_\varepsilon$ with respect to $K_0 \lhd K_0 \rtimes \varepsilon(\hat{V})$ for
$\mathbb{K}_0$, where $\varepsilon : \hat{V} \to \hat{V} / \hat{H}$ denotes the canonical epimorphism.
\end{enumerate}
\end{enumerate}
Then there exists an $ME$-equivariant extension map with respect to $K \lhd M$ for $\mathbb{K}$.
\end{proposition}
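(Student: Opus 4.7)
The plan is to construct $\Lambda$ by associating to each $\lambda\in \mathbb{K}$ a pair of simpler characters, extending each piece via the hypothesised maps, and gluing the extensions over the amalgamation $M_\lambda=K\hat V_\lambda$ along $\hat H$. First I would extract from $\lambda$ two canonical data: an irreducible constituent $\lambda_0\in\mathbb{K}_0$ of $\Res^K_{K_0}(\lambda)$ and the central character $\mu:=\omega_\lambda|_{\hat H}\in \Irr(\hat H)$. Using $K=K_0\hat H$ with $\hat H\leq \Z(K)$, one checks that $K_0\lhd K$ and that $K$ acts trivially on $\Irr(K_0)$ (inner automorphisms from $K_0$, central action from $\hat H$), so $\lambda_0$ is uniquely determined by $\lambda$. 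Moreover, comparing the two ways of restricting $\lambda$ to $K_0\cap\hat H$ yields the compatibility $\omega_{\lambda_0}|_{K_0\cap\hat H}=\mu|_{K_0\cap\hat H}$, and the pair $(\lambda_0,\mu)$ recovers $\lambda$ because $K=K_0\hat H$.

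Next I would apply the two given extension maps to obtain $\tilde\lambda_0:=\Lambda_\varepsilon(\lambda_0)\in \Irr((K_0\rtimes \varepsilon(\hat V))_{\lambda_0})$ and $\tilde\mu:=\Lambda_0(\mu)\in\Irr(\hat V_\mu)$. Since $\hat H\leq \hat V$ we have $M_\lambda=K\hat V_\lambda=K_0\hat V_\lambda$ and $\hat V_\lambda\leq \hat V_{\lambda_0}\cap \hat V_\mu$, so both extensions are defined on the relevant subgroups. To combine them I would use the canonical surjection
\[
\pi\colon K_0\rtimes \hat V_\lambda\twoheadrightarrow M_\lambda,\qquad (k_0,v)\mapsto k_0v,
\]
whose kernel $\Delta:=\{(h^{-1},h)\mid h\in K_0\cap \hat V\}$ lies inside $(K_0\cap\hat H)\times \hat H$. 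Pulling $\tilde\lambda_0$ back along $K_0\rtimes \hat V_\lambda\to K_0\rtimes \varepsilon(\hat V_\lambda)$ and tensoring with the pullback of $\tilde\mu$ through the projection onto $\hat V_\lambda$, I obtain a representation on which $\Delta$ acts by the scalar $\omega_{\lambda_0}(h^{-1})\mu(h)=1$, exactly by the compatibility above. The representation therefore descends along $\pi$ to an irreducible character of $M_\lambda$ whose restriction to $K$ is $\lambda$; this is the desired extension $\Lambda(\lambda)$.

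Finally I would verify the $ME$-equivariance. The assignment $\lambda\mapsto(\lambda_0,\mu)$ is $ME$-equivariant by construction, and the $\varepsilon(\hat V)E$-equivariance of $\Lambda_\varepsilon$ together with the $\hat V E$-equivariance of $\Lambda_0$ transfers to the combined character, yielding $\Lambda(\lambda^x)=\Lambda(\lambda)^x$ for all $x\in ME$. The main obstacle will be the third step: choosing the twist so that the tensor representation actually factors through $\pi$, and checking that the value of the resulting character at $m\in M_\lambda$ is independent of the decomposition $m=k_0v$, which is unique only modulo the central subgroup $K_0\cap \hat V$. Once that bookkeeping is settled, irreducibility of the descended character (its degree equals $\lambda(1)$) and the equivariance statement follow formally.
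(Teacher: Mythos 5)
The paper itself does not prove this proposition: it quotes it as \cite[Proposition 2.2]{TypD} and treats it as a black box, so there is no in-paper argument to compare against. Assessed on its own merits, your construction is correct and is the natural one (and, I believe, essentially the one given in Sp\"ath's source). The decomposition $K = K_0\cdot\hat H$ as a central product over $K_0\cap\hat H$, the passage to the pair $(\lambda_0,\mu)$ with the compatibility $\omega_{\lambda_0}|_{K_0\cap\hat H}=\mu|_{K_0\cap\hat H}$, and the gluing of $\Lambda_\varepsilon(\lambda_0)$ with $\Lambda_0(\mu)$ over the fibre product $K_0\rtimes\hat V_\lambda\twoheadrightarrow M_\lambda$ with kernel $\Delta$ all check out, and your observation that $\Delta$ acts by the scalar $\omega_{\lambda_0}(h^{-1})\mu(h)=1$ is exactly what makes the descent work, so the ``obstacle'' you flag at the end is in fact already resolved: no further twist is required, precisely because the two central characters agree on $K_0\cap\hat H$.

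Two small points worth making explicit. First, for the semidirect product $K_0\rtimes\hat V_\lambda$ (and indeed for the group $K_0\rtimes\varepsilon(\hat V)$ appearing in hypothesis (b)(ii)) to make sense, $\hat V$ must normalise $K_0$; this is an implicit assumption of the statement (it holds in the application where $K_0=[\Levi,\Levi]^F\lhd\N_G(\Levi)$), and you use it silently when you claim $\hat V_\lambda\leq\hat V_{\lambda_0}$ and when you form the semidirect product. Second, in the equivariance check the reduction from $ME$ to $\hat V E$ is worth spelling out: for $k\in K\leq M_\lambda$ one has $\Lambda(\lambda)^k=\Lambda(\lambda)$ automatically because $\Lambda(\lambda)\in\Irr(M_\lambda)$, and since $M=K\hat V$ and $ME=K\hat V E$, the $\hat V E$-equivariance of $\Lambda_0$ and $\varepsilon(\hat V)E$-equivariance of $\Lambda_\varepsilon$ then give the full $ME$-equivariance of the glued map. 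With those two clarifications, the argument is complete.
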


We wish to apply this proposition in the following situation. We set $K_0:=[\Levi,\Levi]^F$ and we let $E:=\langle F_p \rangle$ be the group generated by field automorphisms in \cite[Definition 4.1]{TypeB}. Moreover, $K:=K_0 H'$, $\hat{V}:=V'$ and $M:=KV'=K_0 V'$. It follows from Proposition \ref{intersection} and the remarks following it that the group theoretic requirements in part (a) are satisfied. We will now work towards showing that the character theoretic properties of part (b) are also satisfied.

The computations in the previous section allow us to conclude that assumption (b)(i) in Proposition \ref{ext map} is satisfied:

\begin{proposition}\label{ext H}
	There exists a $V'E$-equivariant extension map $\Lambda$ with respect to $H' \lhd V'$.
\end{proposition}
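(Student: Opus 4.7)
The plan is to construct the extension map $\Lambda$ character by character, making full use of the explicit presentation of $V'$ assembled in the previous two sections. Since $H'$ is elementary abelian of rank $t_l$ with the basis $h_0, (p_1')^2, \dots, (p_{t_l-1}')^2$, every character $\chi \in \Irr(H')$ is linear, and producing $\Lambda$ reduces to choosing, for each $\chi$, an extension $\widetilde{\chi}\in\Irr(V'_\chi)$ in a manner that is consistent under $V'$-conjugation and under the action of $E=\langle F_p\rangle$.

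First, I would compute the action of $V'$ on $H'$ explicitly. Using Lemma \ref{abelian} (in particular the relation $(c_i')^{2d_0} = h_0$ and the central product structure of $C'$) together with the identity $(p_k')^2=h_{2k-1}h_{2k}h_{2k+1}h_{2k+2}$ from Lemma \ref{homomorphism}, the conjugation action of each generator $c_i'$ and $p_k'$ on each basis element of $H'$ can be read directly off the Chevalley relations. This gives a presentation of $V'/H'$ by generators and relations and pins down $V'_\chi$ for each $\chi$.

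Next, I would define $\widetilde\chi$ by lifting generators of $V'_\chi/H'$ to $V'_\chi$ via a normal-form section built from the $c_i'$ and $p_k'$, declaring $\widetilde\chi$ to have value $1$ on each lifted generator. To check this prescribes a genuine character, I verify the defining relations of $V'_\chi/H'$ in $V'_\chi$: every relation holds up to an $H'$-valued correction, and this correction is killed by $\chi$ precisely because the relevant elements lie in $V'_\chi$. The braid-type relations for the $p_k'$, the near-commutativity relations for the $c_i'$, and their mutual conjugation rules (all deducible from the formulas in Section~8) make this a finite, bookkeeping-level verification. Equivariance under the $V'$-action on $\Irr(H')$ is then automatic: for $v \in V'$, one defines $\Lambda({}^v\chi):={}^v(\Lambda(\chi))$ on $V'_{{}^v\chi}=vV'_\chi v^{-1}$, and the consistency on $V'$-fixed characters is encoded in the chosen section.

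The main obstacle will be achieving $E$-equivariance. Because $F_p$ acts on the Chevalley generators by $\x_\alpha(t)\mapsto \x_\alpha(t^p)$, it sends each $\h_{e_i}(\varpi)$ to $\h_{e_i}(\varpi^p)=h_i^{\pm 1}\cdot h_0^{\epsilon}$, so the generators $c_i'$ and $p_k'$ are $F_p$-stable only up to controlled $H'$-valued factors. The section chosen above must therefore be set up so that the discrepancy between ${}^{F_p}\widetilde\chi$ and $\widetilde{{}^{F_p}\chi}$ lies in $\Irr(V'_\chi/H')$ and is forced to be trivial on $\chi$. This is done by replaying the argument of \cite[Section~5.B]{TypeB}, where an analogous extension map was constructed for the unmodified elements $c_i$ and $p_k$; the modifications needed here stem only from passing to $c_i'$ and $p_k'$ via the homomorphisms $\iota_1,\iota_2$ from Lemma~\ref{homomorphism}, and the compatibility of these with $F_p$ follows from the fact that $v_l$ is $F_p$-stable and the Chevalley generators $\n_\alpha(1)$ behave predictably under $F_p$.
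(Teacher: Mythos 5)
Your plan runs into two genuine problems, one conceptual and one structural.

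The conceptual issue is your treatment of $E$-equivariance, which is based on a false premise. You worry that $F_p$ sends $\mathbf{h}_{e_i}(\varpi)$ to $\mathbf{h}_{e_i}(\varpi^p)$ and hence that $c_i'$, $p_k'$ are only $F_p$-stable up to $H'$-valued corrections. But the elements $c_i'$ and $p_k'$ are built exclusively from the Chevalley generators $\n_\alpha(\pm 1)$ (via $v_l$-conjugates of $\n_{\alpha_{k+1}}(1)$, the element $c_1 \in \langle \n_\alpha(\pm 1)\rangle$, and their products and conjugates), and $F_p$ fixes every $\n_\alpha(\pm 1)$ pointwise. The same holds for $h_0$ and $(p_k')^2 \in H'$. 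Consequently $E = \langle F_p \rangle$ \emph{centralizes} $V'$ and $H'$, and $E$-equivariance is vacuous. The paper opens with exactly this observation, which collapses your entire last paragraph; your proposed "replay" of the argument in \cite[Section 5.B]{TypeB} to manage $F_p$-discrepancies addresses a non-existent obstruction.

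The structural issue is that the heart of the statement --- that each $\chi \in \Irr(H')$ actually \emph{extends} to $V'_\chi$ --- is not established by "declaring $\widetilde\chi$ to have value $1$ on lifted generators" and invoking "bookkeeping-level verification." Whether such a prescription defines a character is precisely the question of whether the relevant cohomology class vanishes, and asserting that the $H'$-valued corrections in the relations are "killed by $\chi$" is the claim to be proved, not an observation. The paper sidesteps any such verification by exploiting the explicit decomposition $V'_\lambda = C' \rtimes P'_\lambda$ from Proposition~\ref{intersection}: one extends $\lambda$ to $C'H'$ using that $C'$ is the abelian central product over $\langle h_0\rangle$ from Lemma~\ref{abelian} (so that $\lambda|_{\Z(G)}$ extends to $C'$ with the $c_i'$ all taking the same value, giving a $P'$-stable extension); and one extends $\lambda$ to $P'_\lambda$ by transporting the known extension map for $H_l \lhd V_l$ from \cite[Theorem 5.5]{TypeB} through the isomorphism $\tilde\iota_2 : H_l P \to H'P'$ built from $\iota_2$ of Lemma~\ref{homomorphism}. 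The two extensions then glue uniquely. Your outline mentions $\iota_1,\iota_2$ only in passing for $F_p$-compatibility, but the transport along $\tilde\iota_2$ is exactly what makes the existence of the extension to $P'_\lambda$ a consequence of a known result rather than a new cocycle computation, and that step is missing from your argument.
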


\begin{proof}
	Observe that $E$ centralizes $V'$. Hence, it is enough to construct for each character $\lambda \in \Irr(H')$ an extension to $V'_\lambda$.
	For a character $\lambda \in \Irr(H')$ its inertia group decomposes as $V'_\lambda=C' \rtimes P'_\lambda$ and $\Z(G)=H' \cap C'$. The character $\theta:=\lambda|_{\Z(G)}$ extends to a character of $C'$ with $\hat{\theta}(c_i')=\hat{\theta}(c_1')$ for all $i$. By the computation before Lemma \ref{abelian}, we see that the set $\{c_i' \mid i =1, \dots, a_l\}$ is stable under the group action of $P'=\langle p_i' \mid i=1,\dots, a_l-1 \rangle$. Therefore, the character $\hat{\theta}$ is $P'$-stable. Hence, there exists a unique character of $C' H'$ extending both $\lambda$ and $\hat{\theta}$. In particular, the so-obtained character is $P'_\lambda$-stable.
	
	According to Lemma \ref{homomorphism}, the map $\iota_2: \langle p_1,\dots, p_{t_l-1} \rangle \to P', p \mapsto p^{g_1} p^{g_2},$ is an isomorphism. Since $H_l=\langle h_0 \rangle (\langle p_1,\dots, p_{t_l-1}\rangle \cap H_l)$ and $h_0 \in \Z(G)$ this isomorphism can be extended to an isomorphism $\tilde{\iota}_2:H_l P \to H' P', p \mapsto p^{g_1} p^{g_2}$. According to \cite[Theorem 5.5]{TypeB}, there exists an extension map for $H_l \lhd V_l$ and this gives via the isomorphism $\tilde{\iota}_2$ an extension map from $H'$ to $P'$. Consequently, $\lambda$ extends to $P'_\lambda$.
	
	Using the $P'_\lambda$-stable extension of $\lambda$ to $C'$ and the extension of $\lambda$ to $P'_\lambda$ uniquely determines an extension to $V'_\lambda$.
\end{proof}

\subsection{Structure of $K_0$}

The  group $K_0=[\Levi,\Levi]^F$ is isomorphic to $\SL_2( \varepsilon q^{d_0})^{t_l} \times B_m(q)$, where $\varepsilon:=(-1)^{d+1}$.
More concretely, set $L_1:=\langle  N_{F^{d_0}/F}(\x_{e_2-e_1}(1)),  N_{ F^{d_0}/F}(\x_{e_1-e_2}(1))\rangle$ and $L_i:=L_1^{p_1'\cdots p_{i-1}'}\cong {\rm SL}_2(\varepsilon q^{d_0})$. Here, $F=v_l F_q$ as in \ref{relative Weyl} and $N_{F^{d_0}/F}$ denotes the norm map. Then $K_0=L_1\times\dots \times L_{t_l}\times B_m(q)$.

\begin{lemma}\label{iso}
There exists a unique isomorphism
$$\Theta:L_1 \to \SL_2(\varepsilon q^{d_0})$$
with $\Theta(N_{F^{d_0}/F}(\x_{ \pm (e_1-e_2)}(u)))=\x_{ \pm (e_1-e_2)}(u)$.
\end{lemma}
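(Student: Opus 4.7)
The plan is to realize $L_1$ as the $F$-fixed points of an $F$-stable closed subgroup $\bH_1 \leq \G$ of product $\SL_2$-type, and then identify these fixed points with $\SL_2(\varepsilon q^{d_0})$ via a standard Shintani-type isomorphism sending the two prescribed generators of $L_1$ to the standard Chevalley generators of the target.

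First I would set $\Phi_0 := \{\pm w_l^i(e_1-e_2) : 0 \leq i < d_0\}$ and define $\bH_1$ to be the closed connected subgroup of $\G$ generated by the root subgroups $\{\x_\alpha(t) : t \in \mathbb{F}\}$ for $\alpha \in \Phi_0$. From the explicit description of $w_l = \rho(v_l)$ in Section \ref{relative Weyl}, the $d_0$ roots $w_l^i(e_1-e_2)$ ($0 \leq i < d_0$) are pairwise orthogonal long roots, so the corresponding $\SL_2$-type subgroups pairwise commute and $\bH_1 \cong \SL_2(\mathbb{F})^{d_0}$. The Frobenius $F = v_l F_q$ stabilizes $\bH_1$ and permutes the $d_0$ factors cyclically. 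If $d = d_0$ is odd, $w_l^{d_0}$ fixes $e_1-e_2$, so $F^{d_0}$ restricts to $F_q^{d_0}$ on the first factor; if $d = 2 d_0$, $w_l^{d_0}$ sends $e_1-e_2$ to $-(e_1-e_2)$, so $F^{d_0}$ restricts to $F_q^{d_0}$ composed with the $\SL_2$-graph automorphism $\x_\alpha(t) \leftrightarrow \x_{-\alpha}(t)$ on the first factor.

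Next, a standard Lang--Steinberg argument, equivalent to Shintani descent for the cyclic permutation action of $F$ on the factors of $\bH_1$, gives a canonical isomorphism between $\bH_1^F$ and the fixed-point subgroup of the induced twist on the first $\SL_2(\mathbb{F})$-factor, explicitly realized via $x \mapsto x \cdot F(x) \cdots F^{d_0-1}(x)$. By the case analysis above this twist is precisely the Steinberg endomorphism defining $\SL_2(\varepsilon q^{d_0})$ with $\varepsilon = (-1)^{d+1}$, and the isomorphism sends each Chevalley generator $\x_{\pm(e_1-e_2)}(u)$ of $\SL_2(\varepsilon q^{d_0})$ to $N_{F^{d_0}/F}(\x_{\pm(e_1-e_2)}(u)) \in \bH_1^F$. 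Since $\SL_2(\varepsilon q^{d_0})$ is generated by the two root subgroups $\{\x_{\pm(e_1-e_2)}(u)\}$, one concludes $L_1 = \bH_1^F$ and $\Theta$ is the inverse of this Shintani isomorphism. Uniqueness is then immediate: any homomorphism out of $\SL_2(\varepsilon q^{d_0})$ is determined by its values on elements of the two root subgroups.

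The main technical obstacle I expect lies in the $d = 2 d_0$ case, where one must carefully verify that the restriction of $F^{d_0}$ to the first $\SL_2$-factor coincides exactly with the standard Steinberg endomorphism defining $\SU_2(q^{d_0}) = \SL_2(-q^{d_0})$, tracking all signs arising from the Chevalley relations in \cite[Theorem 1.12.1]{GLS}. Once this normalization is pinned down, both existence and uniqueness of $\Theta$ follow formally from the structure of $\bH_1$ and the universal property of the norm map.
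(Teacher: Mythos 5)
Your strategy mirrors the paper's in outline: realize $L_1$ inside the group generated by the $v_l$-orbit of the root subgroups $\X_{\pm(e_1-e_2)}$, which is a direct product of $d_0$ commuting copies of $\SL_2(\mathbb{F})$ cyclically permuted by $F$, and identify $L_1$ with the $F^{d_0}$-fixed points of the first factor via the norm map $N_{F^{d_0}/F}$. This is also what the paper does, implicitly. The step the paper actually proves, and which your argument leaves unproved, is the identity $F^{d_0}(\x_{e_1-e_2}(u)) = \x_{\varepsilon(e_1-e_2)}(\varepsilon u^{q^{d_0}})$. You assert that when $d=d_0$ is odd, $w_l^{d_0}$ fixes $e_1-e_2$ "so $F^{d_0}$ restricts to $F_q^{d_0}$ on the first factor," but $F^{d_0}=v_l^{d_0}F_q^{d_0}$ and, when $d$ is odd, $v_l^{d_0}=v_{l,0}^{2l}$ is a possibly nontrivial $2$-torsion element of $\bH_l$ (already for $l=1$ one has $\n_\alpha(1)^2=\h_\alpha(-1)\neq 1$). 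If $(e_1-e_2)(v_l^{d_0})=-1$, the $F^{d_0}$-fixed locus of $\X_{e_1-e_2}$ would be $\{u:u^{q^{d_0}}=-u\}$ rather than $\mathbb{F}_{q^{d_0}}$, and the normalization in the lemma statement would fail; the same ambiguity governs the $d=2d_0$ case. You acknowledge the sign issue only in the even case and defer it to "tracking all signs arising from the Chevalley relations," which is not a tractable direct computation for the element $v_l^{d_0}=v_{l,0}^{l}$ or $v_{l,0}^{2l}$, a product of up to $2l$ Chevalley generators.

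The paper sidesteps the brute-force sign chase with a structural argument: it proves $v_l^{d_0}\in\Z(\V_l)$, using that $\rho(v_l^{d_0})\in\Z(\W_l)$, that $\bH_l\leq\C_{\V_l}(v_l^{d_0})$ by the explicit description of $H_l$ in \cite[Section 5.B]{TypeB}, and \cite[Lemma 5.4]{TypeB}. Since $v_l^{d_0}$ then fixes $\n_{e_1-e_2}(1)$, expanding $\n_{e_1-e_2}(1)=\x_{e_1-e_2}(1)\x_{-(e_1-e_2)}(-1)\x_{e_1-e_2}(1)$ via the Chevalley relations (\cite[Satz 2.1.6]{BS2006}) forces the sign to equal $\varepsilon$. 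This centrality argument is the missing ingredient: without it you have shown $L_1\cong\SL_2(\varepsilon q^{d_0})$ abstractly but not that the isomorphism can be normalized as the lemma requires. Once the sign is pinned down, your norm-map construction of $\Theta$ and the uniqueness argument (the two root subgroups generate both $L_1$ and $\SL_2(\varepsilon q^{d_0})$) are sound.
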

\begin{proof}
	We first claim that $v_l^{d_0} \in \Z(\V_l)$. Firstly, $\rho(v_l)^{d_0} \in \Z(\W_l)$ and $v_l^{d_0}=v_{l,0}^{l}$ or $v_l^{d_0}=v_{l,0}^{2l}$. Hence, by \cite[Section 5.B]{TypeB}, we have $\mathbf{H}_l \leq \C_{\V_l}(v_l^{d_0})$. By \cite[Lemma 5.4]{TypeB} we deduce therefore that $v_l^{d_0} \in \Z(\V_l)$.
	
	Note that for $u \in \mathbb{F}$ we have ${}^{v_{l}^{d_0}} \x_{e_1-e_2}(u)=\x_{\varepsilon(e_1-e_2)}(\delta u)$ for some $\delta \in \{\pm 1\}$ by the Chevalley relations. Since $v_l^{d_0} \in \Z(\V_l)$ we have ${}^{v_{l}^{d_0}} \n_{e_1-e_2}(1)=\n_{e_1-e_2}(1)$. This implies $\delta=\varepsilon$ by the Chevalley relations \cite[Satz 2.1.6]{BS2006}. Therefore, $F^{d_0}(\x_{e_1-e_2}(u))=\x_{\varepsilon(e_1-e_2)}(\varepsilon u^{q^{d_0}})$. The claim of the lemma can now be deduced from this.
\end{proof}

	\begin{lemma}
		For $1\leq i\ne j\leq t_l$, the commutator $[L_i,c_j']=1$.
		Moreover, $[B_m(q),V']=1$.
	\end{lemma}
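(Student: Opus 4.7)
The plan is to reduce both assertions to the Chevalley commutator formula by carefully tracking the supports of the relevant roots. I start by observing that $L_i$ is generated by $\x_\alpha(u)$ with $\alpha$ a \emph{long} root supported in $\mathcal{O}_{2i-1}\cup\mathcal{O}_{2i}$. Indeed, $L_1$ is generated by $N_{F^{d_0}/F}(\x_{\pm(e_1-e_2)}(1))$, which are products of long root elements $\x_\gamma(\cdot)$ with $\gamma$ in the $F$-orbit of $\pm(e_1-e_2)$, and these supports lie in $\mathcal{O}_1\cup\mathcal{O}_2$. Conjugation by $p_1'\cdots p_{i-1}'$ has Weyl image $\tau_1\cdots\tau_{i-1}$, which maps the pair $\{\mathcal{O}_1,\mathcal{O}_2\}$ onto $\{\mathcal{O}_{2i-1},\mathcal{O}_{2i}\}$ and preserves root length, so the claim follows.

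For the first assertion, note that $c_j'\in \V_{\mathcal{O}_{2j-1}\cup\mathcal{O}_{2j}}$ is a product of elements $\n_\beta(\pm 1)$ with $\beta\in\Phi_{\mathcal{O}_{2j-1}\cup\mathcal{O}_{2j}}$. For $i\ne j$, the sets $\mathcal{O}_{2i-1}\cup\mathcal{O}_{2i}$ and $\mathcal{O}_{2j-1}\cup\mathcal{O}_{2j}$ are disjoint. For any long root $\alpha$ supported in the first set (so $|\mathrm{supp}\,\alpha|=2$) and any root $\beta$ supported in the second set (so $|\mathrm{supp}\,\beta|\geq 1$), the vector $\alpha\pm\beta$ has support of size at least $3$, which is impossible for a root of $B_n$. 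The Chevalley commutator formula then yields $[\x_\alpha(u),\x_{\pm\beta}(v)]=1$, and since $\n_\beta(1)\in\langle \x_\beta,\x_{-\beta}\rangle$ also $[\x_\alpha(u),\n_\beta(1)]=1$. Multiplying out shows $[L_i,c_j']=1$.

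For the second assertion I verify commutation of $B_m(q)$ with each generator of $V'=\langle c_1c_1^{p_1},p_i'\mid 1\leq i\leq t_l-1\rangle$. The element $p_i$ is a product of $\n_\gamma(1)^{v_l^k}$ with $\gamma=\alpha_{i+1}=e_{i+1}-e_i$ a long root in $\{1,\dots,l\}$; for any root $\alpha$ of $B_m$ supported in $\{l+1,\dots,n\}$, the same support argument gives $[\x_\alpha(u),\n_\gamma(1)^{v_l^k}]=1$, hence $[\x_\alpha(u),p_i]=1$, and by the formulas for $g_1,g_2$ also $[\x_\alpha(u),p_i']=1$. The analogous long-root argument settles commutation of $c_1c_1^{p_1}$ with long roots of $B_m$. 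The delicate case is a short root $\alpha=\pm e_j$ of $B_m$ versus $c_1c_1^{p_1}$: the element $c_1$ may contain short root factors $\n_\beta(\pm 1)$ with $\beta=\pm e_k$ ($k\in\mathcal{O}_1$), and although $s_\beta(\alpha)=\alpha$, here $\alpha+\beta$ \emph{is} a root (long), so the Chevalley formula only gives $c_1\x_\alpha(u)c_1^{-1}=\x_\alpha(\eta u)$ for some sign $\eta=\pm 1$. The crux is that $p_1$ is built entirely from long root $\n$'s and therefore centralizes $\x_\alpha(u)$ by the previous step; this gives
$$c_1^{p_1}\x_\alpha(u)(c_1^{p_1})^{-1}=p_1\,c_1\x_\alpha(u)c_1^{-1}\,p_1^{-1}=p_1\,\x_\alpha(\eta u)\,p_1^{-1}=\x_\alpha(\eta u),$$
so $c_1^{p_1}$ acts on $\x_\alpha(u)$ by the \emph{same} sign $\eta$. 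Composing, $(c_1c_1^{p_1})\x_\alpha(u)(c_1c_1^{p_1})^{-1}=\x_\alpha(\eta^2u)=\x_\alpha(u)$, so $[c_1c_1^{p_1},\x_\alpha(u)]=1$, as desired. The main obstacle is this short-root sign cancellation; everything else reduces to routine support counting in $B_n$.
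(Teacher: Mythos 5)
Your proof is correct and follows essentially the same route as the paper's: locate $L_i$ inside the group generated by long root subgroups supported in $\mathcal{O}_{2i-1}\cup\mathcal{O}_{2i}$, use disjointness of supports together with the Chevalley relations to get commutation for the first claim and for $[B_m(q),P']=1$, and then handle the remaining case of $c_1c_1^{p_1}$ acting on a short root of $B_m$ by the sign-cancellation argument (which is exactly the paper's computation $\x_\alpha(u)^{c_1p_1^{-1}c_1p_1}=\x_\alpha(\varepsilon^2 u)=\x_\alpha(u)$). The only cosmetic difference is that where the paper appeals to a remark of Sp\"ath's thesis for the orthogonality computations, you do the support-size counting in $B_n$ explicitly; and note that one only needs $L_i$ to be \emph{contained} in, not equal to, the group generated by those long root elements.
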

	\begin{proof}
		Denote by $\Phi_{A_{\mathcal{O}_{2i-1}}  \sqcup \mathcal{O}_{2i}}$ the type $A$ subsystem as in \cite[Notation 3.1]{BroughC}. By construction $L_i\leq \langle \X_\alpha\mid \alpha\in \Phi_{A_{\mathcal{O}_{2i-1}} \sqcup \mathcal{O}_{2i}}\rangle$ and $c_j'=x_{2j-1}x_{2j}$ for $x_i \in \langle \X_\alpha \mid \alpha \in  \Phi_{\mathcal{O}_i} \rangle$.
		Fix $\alpha\in \Phi_{A_{\mathcal{O}_{2i-1}}\sqcup \mathcal{O}_{2i}}$ and assume $i\ne j$.
		Then $\alpha\perp \Phi_{\mathcal{O}_i}$.
		By applying \cite[Remark 2.1.7]{BS2006}, it follows that $\x_\alpha(u)^{\n_\beta(1)}=\x_\alpha(u)$ for all $\beta \in \Phi_{ \mathcal{O}_i}$.
		Thus $[\x_\alpha(u),c_j']=1$ for $\alpha\in \Phi_{A_{\mathcal{O}_{2i-1}\sqcup \mathcal{O}_{2i}}}$ whenever $i\ne j$.
		
		The same argument also shows for $1\leq i\leq a_l$ that $[B_m(q),p_i]=1$ which implies $[B_m(q),P']=1$.
		It remains to consider $[B_m(q),c_1']$.
		Let $\alpha\in \Phi\cap \langle e_i\mid l+1\leq i\leq n\rangle$.
		Again applying \cite[Remark 2.1.7]{BS2006}, yields $\x_\alpha(u)^{c_1}=\x_{\alpha}(\varepsilon u)$ with $\varepsilon \in \{\pm 1 \}$. Hence as $[B_m(q),p_1]=1$, it follows that $\x_{\alpha}(u)^{c_1'}=\x_{\alpha}(u)^{c_1p_1^{-1}c_1p_1}=\x_\alpha(\varepsilon u)^{c_1p_1}=\x_{\alpha}(u)$. 
	\end{proof}
	
	It remains to understand the action of $c_i'$ on the factor $L_i$.
	Note that it suffices to explain the action of $c_1'$ on $L_1$ as $c_i'=(c_1')^{p_1'\cdots p_{i-1}'}$ and $L_i'=(L_1')^{p_1'\cdots p_{i-1}'}$.

%
%

\begin{lemma}\label{graph}
	The element $c_1'$ acts as $v'_l$ on $L_1$ and $c_1'$ acts trivially on $B_m(q)$.
\end{lemma}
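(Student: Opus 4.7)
The second assertion is immediate from the previous lemma, which established $[V', B_m(q)] = 1$, combined with $c_1' \in V'$.

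For the first assertion, my plan is to compare the conjugation actions of $c_1'$ and $v_l'$ on $L_1$ by analysing their ratio inside $V_l$, eventually reducing the question to a torus computation. I would proceed in three steps.

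The first step is to show that each $c_i$ with $i \geq 3$ centralizes $L_1$. Indeed, $c_i \in \V_{\mathcal{O}_i}$ and, for $i \geq 3$, the orbit $\mathcal{O}_i$ is disjoint from $\{1,2\}$, so every root in $\Phi_{\mathcal{O}_i}$ is orthogonal to $e_1 - e_2$. By the Chevalley relations, the generators $\n_\alpha(\pm 1)$ of $\V_{\mathcal{O}_i}$ commute with $\x_{\pm(e_1 - e_2)}(u)$ for all $u$, hence centralize $L_1$. Consequently, $c_1 c_2 \cdots c_{a_l}$ and $c_1' = c_1 c_2$ induce the same automorphism of $L_1$.

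The second step observes that, by the explicit description $w_l' = \overline{c}_1 \overline{c}_2 \cdots \overline{c}_{a_l}$ as a product of cycles with pairwise disjoint supports, together with $\rho(c_i) = \overline{c}_i$, we have $\rho(c_1 c_2 \cdots c_{a_l}) = w_l' = \rho(v_l')$. Since both products lie in $V_l$, their ratio is an element $h \in \ker(\rho|_{V_l}) = H_l$.

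The third and most delicate step is to show that this specific $h$ acts trivially on $L_1$. Using the Chevalley formula $\h_{e_j}(\varpi) \x_{e_1 - e_2}(u) \h_{e_j}(\varpi)^{-1} = \x_{e_1 - e_2}(\varpi^{2(\delta_{j,1} - \delta_{j,2})} u)$ and the analogue for $e_2 - e_1$, one finds that an element of $H_l$ centralizes $L_1$ precisely when its exponents on $\h_{e_1}(\varpi)$ and $\h_{e_2}(\varpi)$ (modulo $2$) agree; in particular $h_0$ and $h_1 h_2$ centralize $L_1$, while some other generators such as $h_2 h_3$ do not. The remaining task is to pin down $h$ by expanding $v_l' = v_{l,0}^{a_l}$ as an ordered product of Chevalley generators, reordering it via the Chevalley relations to extract the factor $c_1 c_2 \cdots c_{a_l}$, and reading off the torus remainder. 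This parallels the $\SL_n$ computation in \cite[Section 5.B]{TypeB} and should show that $h$ indeed lies in the subgroup of $H_l$ centralizing $L_1$.

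The main obstacle is this final Chevalley calculation, whose difficulty lies in the careful sign tracking. It is made feasible by the defining property $c_1 \in (\V_{\mathcal{O}_1})^{v_l F_q}$, which forces $c_1 c_2 \cdots c_{a_l}$ to be a very specific $v_l F_q$-fixed lift of $w_l'$, so that $h$ is itself tightly constrained.
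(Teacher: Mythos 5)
Your reduction to a torus computation is essentially the paper's strategy: both you and the authors reduce matters to analysing the element $x := v_l'\,(c_1'\cdots c_{t_l}')^{-1} \in H_l$ (your $h$, with $c_1'\cdots c_{t_l}' = c_1c_2\cdots c_{a_l}$) and showing it centralizes $L_1$. Your first two steps are fine, as is your criterion that an element of $H_l$ centralizes $L_1$ if and only if its $\h_{e_1}(\varpi)$ and $\h_{e_2}(\varpi)$ exponents agree modulo~$2$.

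However, your Step~3 — the one you yourself flag as "the main obstacle" — is left as a claim that the sign-tracking "should show" the desired answer. That is exactly the crux of the lemma, and you have not established it; as written this is a genuine gap. It is also not a small one: the explicit Chevalley expansion of $v_{l,0}^{a_l}$ and the reordering required to peel off $c_1\cdots c_{a_l}$ is precisely the kind of calculation where an unlucky sign would defeat the lemma, and your argument gives no reason to expect the signs to cooperate. There is a further wrinkle you pass over: $c_1$ is only specified up to $\langle h_0, h_1\rangle$ (via $(\V_{\mathcal{O}_1}\cap V_l)\C_{\bH_{\mathcal{O}_1}}(v_l)= \langle h_1,h_0,c_1 \rangle$), and $h_1$ alone does \emph{not} centralize $L_1$ by your own parity criterion — so any "read off the torus remainder" computation must first fix the normalization of $c_1$, which you do not do.

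The paper sidesteps the brute-force calculation entirely with a symmetry argument. Observing that $P=\langle p_1,\dots,p_{a_l-1}\rangle$ centralizes $v_l'$, and that $P$ centralizes the image of $c_1'\cdots c_{t_l}'$ in $C/\langle h_0\rangle$ (using that $C/\langle h_0\rangle$ embeds into $W_l$ where $\rho(P)$ fixes $w_l'$), it follows that $P$ centralizes $x$ modulo $\langle h_0\rangle$. Since the $P$-action on $H_l$ permutes the $h_i$'s, the only $P$-invariants mod $\langle h_0\rangle$ lie in $\langle h_0, h_1\cdots h_{a_l}\rangle$, and every element there centralizes $\X_{\pm(e_1-e_2)}$ by the parity criterion. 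This yields the conclusion without ever expanding $v_{l,0}^{a_l}$, and it is also insensitive to the residual ambiguity in the choice of $c_1$. If you want to complete your proof, replacing the proposed direct expansion by this $P$-conjugation argument is the missing idea.
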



\begin{proof}
We consider the element $x:=v'_l (c_1' \cdots c'_{t_l})^{-1}$. A computation in the Weyl group shows that $x \in H_l$ and we claim that the image of $x$ in $H_l/\langle h_0 \rangle$ gets centralized by $P:=\langle p_1,\dots,p_{a_l-1} \rangle$. By \cite[Section 5.B]{TypeB} the group $C/\langle h_0 \rangle$, where $C:=\langle c_1,\dots c_{a_l} \rangle$, embedds into $W_l$.
In particular, since $P$ centralizes the image of $c_1' \cdots c'_{t_l}$ in $W_l$ it follows that $P$ centralizes the image of $x$ in $H_l/\langle h_0 \rangle$. Using \cite[Section 5.B]{TypeB}, which describes the action of $P$ on $H_l$, it follows that $x \in \langle h_0, h_1 \cdots h_{a_l} \rangle \cap H_l$. By the Chevalley relations we conclude that $x$ centralizes $\X_{\pm (e_1-e_2)}$. In particular, since $c_2',\dots, c'_{t_l}$ centralize $\X_{\pm (e_1-e_2)}$ as well it follows that $c_1'$ acts as $v_l'$ on $L_1$.
%
%
%
\end{proof}

\begin{lemma}\label{ext commutator}
	 There exists an $N E$-stable $\tilde{L}$-transversal $\mathbb{K}_0 \subset \Irr(L_0)$. Moreover, for the constructed set $\mathbb{K}_0$, there exists an $V'/H' \langle F_p \rangle$-equivariant extension map $L_0 \lhd L_0 \rtimes V'/H'$.
\end{lemma}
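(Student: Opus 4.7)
The plan is to exploit the direct product decomposition $L_0 = L_1 \times \cdots \times L_{t_l} \times B_m(q)$, together with the wreath-product structure of the $V'/H'$-action exhibited in the two previous lemmas. By Lemma \ref{graph} the group $V'/H'$ acts trivially on the $B_m(q)$-factor, while on $\SL_2(\varepsilon q^{d_0})^{t_l}$ the elements $p_i'$ permute the $L_i$-factors along a copy of $\mathfrak{S}_{t_l}$ and each $c_i'$ acts on $L_i$ only, by a field-type automorphism of $\SL_2(\varepsilon q^{d_0})$ through the isomorphism $\Theta$ of Lemma \ref{iso}. Thus the image of $V'/H'$ in $\Aut(L_0)$ is contained in $\Aut(\SL_2(\varepsilon q^{d_0})) \wr \mathfrak{S}_{t_l}$, acting as the identity on $B_m(q)$.

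To construct the transversal $\mathbb{K}_0$ I would first choose an $NE$-stable $\tilde{L}$-transversal $\mathbb{K}_B \subseteq \Irr(B_m(q))$, which exists from the Lusztig parametrization of characters of $B_m(q)$ together with the known automorphism-equivariance statements in type $B$. Next, I would pick a $\langle c_1'\rangle E$-stable $\tilde{L}_1$-transversal $\mathbb{K}_1 \subseteq \Irr(L_1)$; this is possible since the characters of $\SL_2$ over a field of odd characteristic are well understood and the outer automorphism group (modulo the diagonal automorphism of order at most $2$) that acts here is cyclic. I would then transport to the remaining factors via $\mathbb{K}_i := \mathbb{K}_1^{p_1'\cdots p_{i-1}'}$ and set
\[
\mathbb{K}_0 := \{ \lambda_1 \otimes \cdots \otimes \lambda_{t_l} \otimes \psi \mid \lambda_i \in \mathbb{K}_i,\ \psi \in \mathbb{K}_B \}.
\]
By construction $\mathbb{K}_0$ is $NE$-stable, and since the $\tilde{L}$-action on $L_0$ coming from the regular embedding decomposes as a product of its restrictions to the simple factors, $\mathbb{K}_0$ is a $\tilde{L}$-transversal.

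For the extension map I would work factor by factor, modelling the argument on Proposition \ref{ext map}. Characters of $B_m(q)$ extend trivially since $V'/H'$ acts trivially there. For each $\lambda_i \in \mathbb{K}_i$, the stabilizer in $\langle c_i'\rangle$ is cyclic, so $\lambda_i$ extends to its inertia group in $L_i \rtimes \langle c_i'\rangle$, and since $\mathbb{K}_i$ is $\langle c_i'\rangle E$-stable we may fix the extension canonically (for instance by prescribing its value on a chosen generator). For a full tuple $(\lambda_1, \dots, \lambda_{t_l})$ the stabilizer in $\mathfrak{S}_{t_l}$ is a Young subgroup, and the standard wreath-product extension technique (choosing a uniform extension on each $\mathfrak{S}_{t_l}$-orbit of factors) produces an extension to the full stabilizer in $V'/H'$; tensoring with the trivial extension on the $B_m(q)$-factor yields the required map.

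The main obstacle is verifying simultaneous $V'/H'$- and $\langle F_p\rangle$-equivariance. The isomorphism $\Theta$ in Lemma \ref{iso} is defined through the norm map $N_{F^{d_0}/F}$ and its interplay with $F_p$ is not entirely transparent; one must pin down which power of the standard field automorphism of $\SL_2(\varepsilon q^{d_0})$ represents the $c_i'$-action under $\Theta$, and verify that the canonical cyclic extension of each $\lambda_i$ can be chosen to be $E$-stable. Once this compatibility is secured, $F_p$-equivariance of the final wreath-product combination will follow formally from the canonicity of the construction.
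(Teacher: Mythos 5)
Your strategy coincides with the paper's: use the decomposition $L_0\rtimes V'/H'\cong(L_1\rtimes\langle c_1'H'\rangle)\wr\mathfrak{S}_{t_l}\times B_m(q)$, build a coherent transversal factor by factor, extend on a single $\mathrm{SL}_2$-factor, and then invoke a wreath-product extension lemma (the paper cites \cite[Lemma 3.6]{TypD}). The gap is precisely where you flag it at the end, and it is substantive rather than cosmetic. First, you must determine the automorphism that $c_1'$ induces on $\mathrm{SL}_2(\varepsilon q^{d_0})$ under $\Theta$: the paper does this by combining Lemma~\ref{graph} (so $c_1'$ acts as $v_l'$ on $L_1$) with the computation from the proof of Lemma~\ref{iso} that ${}^{v_l'^d}\x_{e_1-e_2}(u)=\x_{-(e_1-e_2)}(-u)$ when $d$ is odd, concluding that $c_1'$ acts as $F_q$ or $\tau F_q$ with $\tau$ transpose-inverse, which is \emph{inner} on $\mathrm{SL}_2$; without this, you cannot even tell whether $\langle c_1'\rangle$ meets $\langle F_p\rangle$ in $\mathrm{Out}$ or whether your cyclic stabilizer analysis is correct. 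Second, and more seriously, your claim that $E$-equivariance ``will follow formally from the canonicity of the construction'' does not hold up: prescribing the value of an extension on a chosen generator of a cyclic group is \emph{not} automatically compatible with an external field automorphism, since $F_p$ can move both the character and the extension in incompatible ways. The paper closes this by invoking the $A'(\infty)$ property of characters of $\mathrm{SL}_2$ and $B_m(q)$ established in \cite{TypD}; this is exactly what guarantees both that the chosen transversal $\mathbb{K}_0$ is $NE$-stable and that an $\langle c_1',F_p\rangle$-equivariant extension map for $L_1\lhd L_1\rtimes\langle c_1'H'\rangle$ exists. Your proof needs to import that input explicitly (or reprove it) rather than gesture at canonicity.
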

\begin{proof}
 Observe that $\mathfrak{S}_{t_l} \cong P'/H' \cap P'$ permutes the set $\{ c_i' \mid i=1, \dots t_l \}$. That is we have $$L_0 \rtimes V'/H' \cong (L_1 \rtimes \langle c_1' H' \rangle) \wr \mathfrak{S}_{t_l} \times B_m(q).$$
The field automorphism $F_p \in E$ acts diagonally on all factors in the decomposition of $L_0$.
 Observe that the group $\tilde{L}$ induces all diagonal automorphisms on $L_0$. We can therefore choose a $\tilde{L}$-transversal $\mathbb{K}_0 \subset \Irr(L_0)$ such that every $\chi \in \mathbb{K}_0$ is of the form $\chi=\chi_1 \times \dots \times \chi_{t_l} \times \chi'$ with $\chi_i \in \Irr(\SL_2(\varepsilon q^{d_0}))$, $\chi' \in \Irr(B_m(q))$, such that $\chi_i=\chi_j$ are either equal or not in the same $\GL_2(\varepsilon q^{d_0})$-orbit. 
 
 Note that $c_1' H'$ has order $2d_0$ and acts as $v_l'$ on $L_1$ by Lemma \ref{graph}.
If $d$ is odd then $v_l'$ is $2d$-regular. The proof of Lemma \ref{iso} therefore shows that ${}^{v_l'^d} \x_{e_1-e_2}(u)=\x_{-(e_1-e_2)}(-u)$ when $d$ is odd. By Lemma \ref{graph} and the construction of the isomorphism $\Theta: L_1 \to \SL_2(\varepsilon q^{d_0})$ in Lemma \ref{iso}, the automorphism $c_1'$ therefore acts as $F_q$ or as $\tau F_{q}$, where $\tau$ is tranpose-inverse, on $\SL_2(\varepsilon q^{d_0})$. Note that transpose-inverse is an inner automorphism of $\SL_2(\varepsilon q^{d_0})$.

 The characters $\chi_i, \chi'$ satisfy $A'(\infty)$ by \cite{TypD}. In particular, the set $\mathbb{K}_0$ is $NE$-stable. Thus, there exists an $\langle c_1,F_p \rangle$-equivariant extension map for $L_1 \lhd L_1 \rtimes \langle c_1' H' \rangle$. The claim follows now from applying \cite[Lemma 3.6]{TypD}.
\end{proof}

We define $\mathbb{K}:=\Irr(K \mid \mathbb{K}_0)$ and $\mathbb{T}:=\Irr(L \mid  \mathbb{K} )$. Since $\tilde{L}/L_0$ is abelian, the sets $\mathbb{K}$ and $\mathbb{T}$ are again $N E$-stable $\tilde{L}$-transversals of $\Irr(K)$ resp. $\Irr(L)$.

Note that $K=K_0 H'=L_0 H'$ is a central product. Hence, $\mathbb{K}$ consists of all extensions to $K$ of characters in $\mathbb{K}_0$.

\begin{proposition}
	There exists an $ME$-equivariant extension map with respect to $K \lhd M$ for $\mathbb{K}$.
\end{proposition}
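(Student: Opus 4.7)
The plan is to invoke Proposition \ref{ext map} directly, with $K, M, E$ as set up, with $K_0 = [\Levi,\Levi]^F$, $\hat V := V'$, and with $\mathbb{K}$ playing the role of the target set. Thus the proof reduces to verifying the four hypotheses of that proposition in our concrete setting.

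First I would check the group-theoretic hypotheses (a). Since $K = K_0 H'$ by definition and $H' \leq V'$, the equalities $K = K_0 (K \cap V')$ and $M = K V'$ follow from $M = K_0 V'$. The identity $K \cap V' = H'$ is immediate from $H' \leq K \cap V'$ together with Proposition \ref{intersection}, which gives $V' \cap H = H'$; any element of $K \cap V'$ lies in $L \cap V'$, and by the structure of $V' = C' \rtimes P'$ and the fact that the non-trivial coset representatives of $H'$ in $V'$ act non-trivially on $L_0$, it lies in $H' = V' \cap H$. For $H' \leq \Z(K)$, I combine two ingredients already in the excerpt: the generators $(p_i')^2 = h_{2i-1}h_{2i}h_{2i+1}h_{2i+2}$ (Lemma \ref{homomorphism}) together with $h_0$ generate $H'$, and the remark following Proposition \ref{intersection} notes that $h_i h_{i+1}$ lies in $\Z(L)$ whenever $i$ is odd, so each of the two pairs $h_{2i-1}h_{2i}$ and $h_{2i+1}h_{2i+2}$ is central in $L$, whence $H' \leq \Z(L)$. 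Since $K \leq L$, this forces $H' \leq \Z(K)$.

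Next I would assemble the character-theoretic hypotheses (b). Condition (b)(i), a $V'E$-equivariant extension map for $H' \lhd V'$, is exactly Proposition \ref{ext H}. Condition (b)(ii) requires a $\varepsilon(V')E$-equivariant extension map for $K_0 \lhd K_0 \rtimes \varepsilon(V')$ for the transversal $\mathbb{K}_0$, where $\varepsilon : V' \to V'/H'$. This is precisely the content of Lemma \ref{ext commutator}, using the identification $V'/H' \cong K_0 \rtimes V'/H'$ through the semidirect product structure (after quotienting out the central piece $H'$ which acts trivially on $K_0$). A small compatibility point to verify is that the set $\mathbb{K}_0$ chosen in Lemma \ref{ext commutator} is the same as the one induced by $\mathbb{K}$; since $K/K_0 \cong H'/(H' \cap K_0)$ is central in $K$, every character in $\mathbb{K}$ restricts to a character of $K_0$, and by construction $\mathbb{K} = \Irr(K \mid \mathbb{K}_0)$ consists of the (unique, up to linear twists by $\Irr(K/K_0)$) extensions of elements of $\mathbb{K}_0$, so $\cup_{\lambda \in \mathbb{K}} \Irr(\lambda|_{K_0}) = \mathbb{K}_0$.

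Having confirmed both (a) and (b), Proposition \ref{ext map} produces an $ME$-equivariant extension map from $K$ to $M$ for $\mathbb{K}$, which is the statement to be proved. The main obstacle, and the one that I would double-check carefully, is the centrality assertion $H' \leq \Z(K)$; the argument above rests on the parity statement about $h_i h_{i+1}$ in $\Z(L)$, and on checking that no hidden element of $V' \setminus H'$ intersects $K$, i.e. on the equality $K \cap V' = H'$ rather than merely the containment. All remaining ingredients are direct citations of the lemmas just established.
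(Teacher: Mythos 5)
Your proposal is correct and takes essentially the same approach as the paper: the paper's own proof simply observes that Lemma \ref{ext commutator} and Proposition \ref{ext H} supply the hypotheses of Proposition \ref{ext map}(b), with the group-theoretic hypotheses (a) having been noted (via Proposition \ref{intersection} and the remarks following it) in the paragraph where $K$, $K_0$, $M$, $\hat V$, $E$ are introduced. Your more detailed verification of (a), in particular of $K\cap V'=H'$ and $H'\leq\Z(K)$, and of the compatibility $\bigcup_{\lambda\in\mathbb{K}}\Irr(\lambda|_{K_0})=\mathbb{K}_0$, matches what the paper leaves implicit.
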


\begin{proof}
We observe that by Lemma \ref{ext commutator} and Lemma \ref{ext H} the assumptions of Proposition \ref{ext map}(b) are satisfied.
\end{proof}

\begin{proposition}\label{ext}
There exists an $NE$-equivariant extension map from $L$ to $N$ for $\mathbb{T}$.
\end{proposition}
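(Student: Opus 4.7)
The plan is to apply Proposition~\ref{ext map} once more, this time to the triple $(L\lhd N,\mathbb{T})$, in the same spirit as the preceding application yielding the extension map on $K\lhd M$ for $\mathbb{K}$. In the notation of that proposition I would take $K_0:=[\Levi,\Levi]^F$ (as before) and $\hat V:=\Z(L)V'$. Using $H'\leq \Z(L)$ (noted in Section~9 after Proposition~\ref{intersection}), one has
\[
L\cap \hat V=\Z(L)(L\cap V')=\Z(L)H'=\Z(L),
\]
so $\hat H=\Z(L)\leq \Z(L)$ as required, and $N=LV'=L\hat V$. The remaining group-theoretic condition $L=K_0\hat H$ is the assertion $L=[\Levi,\Levi]^F\Z(L)$, to be verified from the explicit product structure of $\Levi$ in case~2 of Lemma~\ref{structure}.

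The canonical quotient $\varepsilon(\hat V)=\hat V/\hat H$ is isomorphic to $V'/H'$ (since $V'\cap \Z(L)=H'$), so condition~(b)(ii) of Proposition~\ref{ext map} becomes the existence of an $\varepsilon(\hat V)\langle F_p\rangle$-equivariant extension map on $K_0\lhd K_0\rtimes V'/H'$ for $\mathbb{K}_0$, which is exactly Lemma~\ref{ext commutator}. For condition~(b)(i), one needs an $\hat V\langle F_p\rangle$-equivariant extension map on $\Z(L)\lhd \Z(L)V'$. Given $\lambda\in \Irr(\Z(L))$, its restriction $\lambda|_{H'}\in \Irr(H')$ has an extension $\Lambda(\lambda|_{H'})\in \Irr(V'_{\lambda|_{H'}})$ by Proposition~\ref{ext H}; one then glues to produce the unique character of $\Z(L)V'_\lambda$ whose restriction to $\Z(L)$ is $\lambda$ and whose restriction to $V'_\lambda$ agrees with $\Lambda(\lambda|_{H'})|_{V'_\lambda}$. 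Compatibility on the overlap $H'=\Z(L)\cap V'_\lambda$ is automatic since both restrict to $\lambda|_{H'}$, and the equivariance under $\hat V\langle F_p\rangle$ is inherited from $\Lambda$ together with the $\langle F_p\rangle$-stability of $\Z(L)$ as a subgroup.

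The main technical obstacle is establishing $L=[\Levi,\Levi]^F\Z(L)$. Since $\Levi=\Z(\Levi)^\circ[\Levi,\Levi]$ is a reductive group with simply connected derived subgroup of type $B_m\times A_1^{t_l}$, the obstruction to the equality $\Levi^F=\Z(\Levi)^{\circ F}[\Levi,\Levi]^F$ is a cohomological obstruction valued in the finite central subgroup $\Z(\Levi)^\circ\cap [\Levi,\Levi]$. One needs to check that replacing $\Z(\Levi)^{\circ F}$ by the full centre $\Z(L)=\Z(\Levi)^F$ absorbs this obstruction; this will follow from a direct computation using the explicit torus structure of the form $(q^{d_0}+\varepsilon)^{a_l}$ in case~2, together with the oddness of $\ell$. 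Once this and the well-definedness of the gluing in (b)(i) are verified, Proposition~\ref{ext map} produces the desired $N\langle F_p\rangle$-equivariant extension map from $L$ to $N$ for $\mathbb{T}$.
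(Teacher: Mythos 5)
Your approach differs fundamentally from the paper's. The paper does not apply Proposition~\ref{ext map} to the pair $L \lhd N$; it applies it only to $K = L_0 H' \lhd M = L_0 V'$ in the preceding proposition, and then promotes the resulting extension map from $K \lhd M$ to $L \lhd N$ by a Clifford-theoretic lift: a character $\theta \in \mathbb{T}$ lies above a \emph{unique} $\theta_0 \in \mathbb{K}$ (since $\mathbb{K}$ is a $\tilde{L}$-transversal), the extension map on $K \lhd M$ gives a $(V'E)_{\theta_0}$-stable extension of $\theta_0$ to $M_{\theta_0}$, and a compatibility lemma of Sp\"ath (\cite[Lemma 4.1]{BS}) together with Clifford correspondence and the equality $N_{\theta_0} = L_{\theta_0} M_{\theta_0}$ then yield the desired $(NE)_\theta$-stable extension of $\theta$ to $N_\theta$.

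The reason the paper takes this indirect route is exactly where your argument breaks. To apply Proposition~\ref{ext map} with $K = L$, $\hat V = \Z(L)V'$, $\hat H = \Z(L)$ and $K_0 = [\Levi,\Levi]^F = L_0$, hypothesis~(a)(i) forces $L = K_0\hat H = L_0\Z(L)$. You flag this as ``the main technical obstacle'' and assert it will follow from a direct computation using the torus structure and the oddness of $\ell$; but the equality is false, and it has nothing to do with $\ell$. The obstruction $L/(L_0\Z(L))$ is controlled by $H^1(F,\Z([\Levi,\Levi]))$, a $2$-group depending only on the rational structure of $\Levi$. Concretely, in case~2 each $A_1(q^{d_0})$ factor is glued to an accompanying $(q^{d_0}+\varepsilon)$-torus in a $\GL_2$-like fashion, and already $[\GL_2(q):\SL_2(q)\cdot\Z(\GL_2(q))]=2$ for odd $q$, because the determinant restricted to scalar matrices hits only squares. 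So $L\supsetneq L_0\Z(L)$, and the decomposition you propose cannot satisfy~(a)(i) --- nor can any other choice of $\hat V$ with $\hat H\leq\Z(L)$, since that always yields $L=L_0\hat H\subseteq L_0\Z(L)\subsetneq L$. This is precisely why the paper works with the smaller central product $K=L_0H'$ (for which (a)(i) holds trivially) and handles the passage to $L$ by a separate Clifford-theory step; your proposal is missing that step. The gluing you sketch in~(b)(i) is also not fully justified (one must check multiplicativity on $\Z(L)V'_\lambda$ when $\Z(L)\cap V'_\lambda\neq 1$), but this is secondary to the failure of~(a)(i).
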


\begin{proof}
The proof is similar to \cite[p.28]{TypD} using the properties we proved so far.
For the proof it is sufficient to construct for
every $\theta \in \mathbb{T} = \Irr(L \mid \mathbb{K})$ some $(NE)_\theta$-stable extension of $\theta$ to $N_\theta$. A character $\theta \in \mathbb{T}$
lies above a unique $\theta_0 \in \mathbb{K} = \Irr(K \mid \mathbb{K}_0)$. Moreover some extension $\tilde{\theta}_0 \in \Irr(L_{\theta_0}
)$ to
$L_{\theta_0}$
satisfies $\Ind_{L_{\theta_0}}^{L}(\tilde{\theta}_0
) = \theta$. By the properties of $\mathbb{K}$ we see $N_{\theta_0} = L_{\theta_0} M_{\theta_0}$.
%

By Proposition \ref{ext map} the character $\theta_0$ has a $(V'E)_{\theta_0}$-stable extension to $M_{\theta_0}$. According to \cite[Lemma 4.1]{BS} this defines an extension $\phi$ of $\tilde{\theta}_0$
to $N_{\tilde{\theta}_0}$
since $N_{\tilde{\theta}_0}\leq L_{\theta_0} M_{\theta_0}$. By
the construction we see that $\Ind^{N_\theta}_{N_{\tilde{\theta}_0}}(\phi)$ is an extension of $\theta$.
As $\mathbb{T}$ is an $M$-stable $\tilde{L}$-transversal $\tilde{N}_{\theta_0} = \tilde{L}_{\theta_0} M_{\theta_0}$
and $(\tilde{N} E)_{\theta_0} = \tilde{L}_{\theta_0} (ME)_{\theta_0}$. Hence
this extension of $\theta_0$ defines an extension of $\theta$ as required.
\end{proof}

In our situation, it is easily possible to extend the extension map from Proposition \ref{ext} to an $NE$-equivariant extension map on $\Irr(L)$:

\begin{corollary}\label{ext2}
	There exists an $NE$-equivariant extension map from $L$ to $N$.
\end{corollary}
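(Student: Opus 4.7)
The plan is to extend the $NE$-equivariant extension map $\Lambda$ from $\mathbb{T}$, given by Proposition \ref{ext}, to all of $\Irr(L)$ using $\tilde{L}$-translation, exploiting the fact that $\mathbb{T}$ is an $\tilde{L}$-transversal of $\Irr(L)$.

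First, I would verify that $\tilde{L}$ normalizes $N$. Since $\tilde{G} = G \Z(\tilde{G})$ gives $[\tilde{G}, G] \leq G$, and $\tilde{L} \cap G = L$, we conclude $[\tilde{L}, N] \leq L \leq N$; in particular, conjugation by $\tilde{L}$ acts trivially on $N/L$. For $\theta' \in \Irr(L)$, there is a unique $\theta \in \mathbb{T}$ in the same $\tilde{L}$-orbit; I would write $\theta' = \theta^x$ with $x \in \tilde{L}$ (unique up to $\tilde{L}_\theta$) and define $\Lambda(\theta') := \Lambda(\theta)^x$, which lies in $\Irr(x^{-1} N_\theta x) = \Irr(N_{\theta'})$ and restricts to $\theta'$.

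To establish well-definedness, I must show $\Lambda(\theta)$ is $\tilde{L}_\theta$-stable. Using Remark \ref{rmk} for the isolated block case~2, $\tilde{L}_\lambda = L \Z(\tilde{G})$, which controls $\tilde{L}_\theta$ for the relevant block characters. Writing $y \in \tilde{L}_\theta$ as $y = lz$ with $l \in L$ and $z \in \Z(\tilde{G}) \leq \Z(\tilde{N})$, conjugation by $y$ on $N_\theta$ reduces to conjugation by $l \in L \leq N_\theta$, an inner automorphism that fixes every irreducible character. Hence $\Lambda(\theta)^y = \Lambda(\theta)$, confirming well-definedness.

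Finally, for $NE$-equivariance, given $n \in N$ and $\theta' = \theta^x$, I would write $xn = (xnx^{-1}) x = n' x$ with $n' \in N$. Since $\mathbb{T}$ is $NE$-stable, $\theta^{n'} \in \mathbb{T}$, and invoking the $NE$-equivariance of $\Lambda$ on $\mathbb{T}$ gives:
\[
\Lambda(\theta')^n = \Lambda(\theta)^{xn} = \Lambda(\theta)^{n' x} = \Lambda(\theta^{n'})^x = \Lambda((\theta^{n'})^x) = \Lambda((\theta')^n).
\]
The verification for field automorphisms is analogous. The main obstacle is the $\tilde{L}_\theta$-stability of $\Lambda(\theta)$, which I handle via Remark \ref{rmk} for characters in the isolated block under consideration; only these characters are needed for the verification of Assumption \ref{assumption}.
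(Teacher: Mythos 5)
Your proposal takes essentially the same approach as the paper: start from the map $\Lambda$ on the $\tilde{L}$-transversal $\mathbb{T}$ from Proposition~\ref{ext} and extend it $\tilde{L}$-equivariantly, reducing well-definedness to the $\tilde{L}_\theta$-stability of $\Lambda(\theta)$, which you get from Remark~\ref{rmk}. The paper's own proof is exactly this one-paragraph argument, citing $\tilde{L}_\lambda = \tilde{L}_{\Lambda(\lambda)}$ from Remark~\ref{rmk} (which in turn rests on the bijection $\tilde{L}_\theta/\tilde{L}_{\Lambda(\theta)} \to \Irr(W(\theta)/W(\tilde\theta))$ from Proposition~\ref{star} together with $W(\theta)=W(\tilde\theta)$). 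Your route to the stability is marginally more direct: you use $\tilde{L}_\theta \leq \tilde{L}_\lambda = L\Z(\tilde{G})$ and observe that conjugation by any $lz$ with $l\in L\leq N_\theta$ and $z\in\Z(\tilde{G})$ is inner on $N_\theta$, hence fixes $\Lambda(\theta)$, avoiding the combinatorial fact $W(\theta)=W(\tilde\theta)$ entirely. You also spell out the $NE$-equivariance computation that the paper dismisses as "easily checked"; your calculation is correct, noting in particular that $[\tilde{L},N]\leq L$ lets you commute $x\in\tilde L$ past $n\in N$. Your explicit caveat — that the stability argument only covers $\theta\in\Irr(L,b_L(\lambda))$, which is what Assumption~\ref{assumption}(i) actually needs — is a fair observation; the paper's proof is subject to the same implicit restriction, since Remark~\ref{rmk} is likewise stated only for block characters.
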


\begin{proof}
Let $\Lambda$ be the extension map for $\mathbb{T}$ from Proposition \ref{ext}. We extend $\Lambda$ to $\Irr(L)$ by extending it $\tilde{L}$-equivariantly, i.e. we define $\Lambda({}^{\tilde{l}} \theta):={}^{\tilde{l}} \Lambda(\theta)$ for $\theta \in \mathbb{T}$ and $\tilde{l} \in \tilde{L}$. Note that this is well-defined (i.e. does not depend on the choice of $\tilde{l}$) since $\tilde{L}_\lambda=\tilde{L}_{\Lambda(\lambda)}$ by Remark \ref{rmk}. It is now easily checked that the extended map is still $NE$-equivariant.
\end{proof}


\begin{remark}
	The conclusion of Proposition~\ref{ext} will also hold for any $d$-split Levi subgroup $\bL$ with root system $\Phi_{\bL}\cong \Phi_{B_{l}}\sqcup_{i}\Phi_{A_{2i-1}}^{s_i}$.
	In particular, for suitably chosen elements $g_1,\dots ,g_{2i}$ a corresponding map $\iota_2$ for the factor $\Phi_{A_{2i-1}}^{s_i}$ can be constructed (as in \cite[Section 4.C.2]{BroughC} for the analogous situation in type C).
\end{remark}

\subsection{Verifying the inductive condition}

Before proving our main theorem, we show how one obtains Assumption \ref{assumption} for cases 1 and 3 of Table \ref{table} from \cite{TypeB}:

\begin{remark}\label{modifaction}
	We wish to construct a $d$-split Levi subgroup of $B_n(q)$ of rational type $B_m(q) (q^{d_0}+ (-1)^{d+1} )^a$ for some integers $m$ and $a$. For this we modify the construction in \ref{Sylow} by defining $n':=n-m$ and by keeping the same definitions as there. It follows that the Levi subgroup $\Levi$ with root system $\Phi_2=\Phi \cap \langle e_i \mid n' \leq i \leq n \rangle$ is $vF_q$-stable (where $v=({\bf n}_{\alpha_1}(1)\cdots {\bf n}_{\alpha_{n'}}(1))^{\frac{2n'}{d_0}}$) and $\Levi^{vF_q}$ has type $B_m(q) (q^{d_0}+ (-1)^{d+1} )^a$. One observes that the considerations in \cite[Section 5.D]{TypeB} apply verbatim to our situation. In particular, Assumption \ref{assumption} follows from the proof of \cite[Theorem 4.2]{TypeB}. 
\end{remark}

	\begin{theorem}
	Let $G$ be a quasi-simple group of Lie type $B_n$ or $C_n$ defined over the finite field $\mathbb{F}_q$ for $q$ a prime power of an odd prime and let $\ell \geq 5$ not dividing $q$.
	Then every $\ell$-block of $G$ satisfies the iAM-condition.
\end{theorem}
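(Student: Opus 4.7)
The plan is to combine the reduction from \cite[Theorem~D]{Jordan3} with Theorem \ref{RedCabQIso} so that the verification of the iAM-condition for type $B_n$ and $C_n$ is reduced to checking Assumption \ref{assumption} for isolated $d$-cuspidal pairs in these types, and then to dispatch the resulting cases using either existing work or the constructions developed earlier in this paper.

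More concretely, I would first invoke \cite[Theorem~D]{Jordan3} to reduce to the verification of the iAM-condition for isolated $\ell$-blocks $b = b_G(\bL,\lambda)$ of $G$. Since isolated blocks are in particular quasi-isolated, Theorem \ref{RedCabQIso} further reduces the problem to the statement that every such $(\bL,\lambda)$ satisfies Assumption \ref{assumption}. For groups of type $C_n$ this is already done: Assumption \ref{assumption} was verified in \cite{BroughC} for every $d$-cuspidal pair. For type $B_n$, Lemma \ref{structure} lists the three possible rational structures of $\bL^F$ in Table~\ref{table}. Cases~1 and~3 only involve Levi subgroups of the form $B_m(q)(q^{d_0}+\varepsilon)^a$ and are handled by Remark~\ref{modifaction}, which shows that the analysis of \cite[Section~5.D]{TypeB} applies verbatim, so that Assumption \ref{assumption} follows from the proof of \cite[Theorem~4.2]{TypeB}.

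This leaves case~2 of Table~\ref{table}, which is precisely where the machinery of the preceding sections is designed to apply. For such a pair $(\bL,\lambda)$ I would argue as follows: by Corollary \ref{coro}(b), after possibly replacing $\lambda$ by an $\tilde L$-conjugate (which leaves the block $b$ unchanged), one can assume that $(\bL,\lambda)$ satisfies the $A'(\infty)$-condition of Definition~\ref{cuspstar}; the extension map required by Assumption \ref{assumption}(i) is supplied directly by Corollary \ref{ext2} (noting that in type $B_n$ there are no non-trivial graph automorphisms, so $\hat N$-equivariance reduces to $NE$-equivariance); and the hypothesis of Assumption \ref{assumption}(ii) becomes vacuous, because Lemma \ref{blocktheory} together with Remark \ref{rmk} forces $W(\theta) = W(\tilde\theta)$ for every $\theta \in \Irr(L, b_L(\lambda))$, so the claim follows from Remark \ref{rem}(a). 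Thus all parts of Assumption \ref{assumption} hold in case~2 as well, and Theorem \ref{RedCabQIso} yields that $b$ is AM-good relative to the Cabanes subgroup of its defect group, completing the verification of the iAM-condition for all blocks under consideration.

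The main obstacle in this plan is case~2 in type $B_n$, and essentially all the effort is absorbed into the construction culminating in Corollary \ref{ext2}. The difficulty there is twofold: to construct a suitable supplement $V' = C' \rtimes P'$ of the relative Weyl group inside $\V_l^{v_l F_q}$ whose intersection with the torus is controlled (Proposition \ref{intersection}) in order to apply the criterion of Proposition \ref{ext map}, and then to produce extension maps both on the abelian quotient (Proposition \ref{ext H}) and on the derived factor $K_0$ (Lemma \ref{ext commutator}) with the correct equivariance under the field automorphisms. Once this is in place, the above paragraphs assemble into the final theorem with no further computation.
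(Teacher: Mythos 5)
Your proposal is correct and follows essentially the same route as the paper's own proof: reduce to isolated blocks via the reduction theorem in \cite{Jordan3}, apply Theorem \ref{RedCabQIso} (equivalently Theorem \ref{bijection}) to reduce to checking Assumption \ref{assumption}, dispatch type $C_n$ via \cite{BroughC}, and in type $B_n$ handle cases 1 and 3 of Table \ref{table} by Remark \ref{modifaction} and case 2 by Corollary \ref{ext2} together with Remark \ref{rmk} and Remark \ref{rem}. If anything, your version is marginally more explicit, since you spell out that the $A'(\infty)$ requirement in Assumption \ref{assumption} is secured by Corollary \ref{coro}(b) after an $\tilde L$-conjugation, a point the paper leaves implicit.
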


\begin{proof}
	According to the reduction theorem in \cite[Theorem 12.6]{Jordan3} in order to show the theorem it suffices to check the iAM-condition for isolated blocks relative to the normalizer of their Cabanes subgroup. As explained in the proof there, we may also assume that $G$ has non-exceptional Schur multiplier. Let $b$ be a isolated block associated to the $d$-cuspidal pair $(\Levi,\lambda)$. According to Theorem \ref{bijection} it suffices for this to check that Assumption \ref{assumption} is satisfied. For groups of type $C_n$ this criterion was verified in \cite[Theorem 1.2]{BroughC}. If $\G$ is of type $B_n$, then $\Levi$ is one of the Levi subgroups of Table \ref{table}. For the Levi subgroups in cases 1 and 3 Assumption \ref{assumption} follows from Remark \ref{modifaction}. For the Levi subgroups in case 2 the Assumption \ref{assumption}(i) follows from Corollary \ref{ext2}. Assumption \ref{assumption}(ii) on the other hand follows from Remark \ref{rem}(i) and Remark \ref{rmk}.
\end{proof}


\begin{thebibliography}{{Ruh}22b}
	
	\bibitem[Bon06]{Bonnafe2}
	C\'{e}dric Bonnaf\'{e}.
	\newblock Sur les caract\`eres des groupes r\'{e}ductifs finis \`a centre non
	connexe: applications aux groupes sp\'{e}ciaux lin\'{e}aires et unitaires.
	\newblock {\em Ast\'{e}risque}, (306), 2006.
	
	\bibitem[{Bro}22]{BroughC}
	Julian {Brough}.
	\newblock {Characters of normalisers of $d$-split Levi subgroups in $\mathrm{Sp}_{2n}(q)$}.
	\newblock {\em arXiv e-prints}, page arXiv:2203.06072, March 2022.
	
	\bibitem[BS19]{Brough}
	Julian {Brough} and Britta {Sp{\"a}th}.
	\newblock {On the Alperin-McKay conjecture for simple groups of type
		$A$}.
	\newblock {\em arXiv e-prints}, page arXiv:1901.04591, Jan 2019.
	
	\bibitem[CE94]{CabEng}
	Marc Cabanes and Michel Enguehard.
	\newblock On unipotent blocks and their ordinary characters.
	\newblock {\em Invent. Math.}, 117(1):149--164, 1994.
	
	\bibitem[CE99]{Marc}
	Marc Cabanes and Michel Enguehard.
	\newblock On blocks of finite reductive groups and twisted induction.
	\newblock {\em Adv. Math.}, 145(2):189--229, 1999.
	
	\bibitem[CE04]{MarcBook}
	Marc Cabanes and Michel Enguehard.
	\newblock {\em Representation theory of finite reductive groups}, volume~1 of
	{\em New Mathematical Monographs}.
	\newblock Cambridge University Press, Cambridge, 2004.
	
	\bibitem[CS17]{TypeC}
	Marc Cabanes and Britta Sp\"{a}th.
	\newblock Inductive {M}c{K}ay condition for finite simple groups of type
	{${C}$}.
	\newblock {\em Represent. Theory}, 21:61--81, 2017.
	
	\bibitem[CS13]{Spaeth}
	Marc Cabanes and Britta Sp{\"a}th.
	\newblock Equivariance and extendibility in finite reductive groups with
	connected center.
	\newblock {\em Math. Z.}, 275(3-4):689--713, 2013.
	
	\bibitem[CS19]{TypeB}
	Marc Cabanes and Britta Sp\"{a}th.
	\newblock Descent equalities and the inductive {M}c{K}ay condition for types
	{$B$} and {$E$}.
	\newblock {\em Adv. Math.}, 356:106820, 48, 2019.
	
	\bibitem[Eng00]{Enguehard}
	Michel Enguehard.
	\newblock Sur les {$l$}-blocs unipotents des groupes r\'{e}ductifs finis quand
	{$l$} est mauvais.
	\newblock {\em J. Algebra}, 230(2):334--377, 2000.
	
	\bibitem[Eng08]{EnguehardJordandecomp}
	Michel Enguehard.
	\newblock Vers une d\'{e}composition de {J}ordan des blocs des groupes
	r\'{e}ductifs finis.
	\newblock {\em J. Algebra}, 319(3):1035--1115, 2008.
	
	\bibitem[GLS98]{GLS}
	Daniel Gorenstein, Richard Lyons, and Ronald Solomon.
	\newblock {\em The classification of the finite simple groups. {N}umber 3.
		{P}art {I}. {C}hapter {A}}, volume~40 of {\em Mathematical Surveys and
		Monographs}.
	\newblock American Mathematical Society, Providence, RI, 1998.
	
	\bibitem[GM20]{GeckMalle}
	Meinolf Geck and Gunter Malle.
	\newblock {\em The character theory of finite groups of {L}ie type}, volume 187
	of {\em Cambridge Studies in Advanced Mathematics}.
	\newblock Cambridge University Press, Cambridge, 2020.
	
	\bibitem[KKL12]{KKL}
	Radha Kessar, Shigeo Koshitani, and Markus Linckelmann.
	\newblock Conjectures of {A}lperin and {B}rou\'{e} for 2-blocks with elementary
	abelian defect groups of order 8.
	\newblock {\em J. Reine Angew. Math.}, 671:85--130, 2012.
	
	\bibitem[KM15]{Cabanesgroup}
	Radha Kessar and Gunter Malle.
	\newblock Lusztig induction and {$\ell$}-blocks of finite reductive groups.
	\newblock {\em Pacific J. Math.}, 279(1-2):269--298, 2015.
	
	\bibitem[Mal07]{MalleHZ}
	Gunter Malle.
	\newblock Height 0 characters of finite groups of {L}ie type.
	\newblock {\em Represent. Theory}, 11:192--220, 2007.
	
	\bibitem[Mal14]{MalleAlperin}
	Gunter Malle.
	\newblock On the inductive {A}lperin-{M}c{K}ay and {A}lperin weight conjecture
	for groups with abelian {S}ylow subgroups.
	\newblock {\em J. Algebra}, 397:190--208, 2014.
	
	\bibitem[Mal17]{MalleCuspidal}
	Gunter Malle.
	\newblock Cuspidal characters and automorphisms.
	\newblock {\em Adv. Math.}, 320:887--903, 2017.
	
	\bibitem[MS16]{MS}
	Gunter Malle and Britta Sp\"ath.
	\newblock Characters of odd degree.
	\newblock {\em Ann. of Math. (2)}, 184(3):869--908, 2016.
	
	\bibitem[Nav98]{NavarroBook}
	Gabriel Navarro.
	\newblock {\em Characters and blocks of finite groups}, volume 250 of {\em
		London Mathematical Society Lecture Note Series}.
	\newblock Cambridge University Press, Cambridge, 1998.
	
	\bibitem[Ruh22a]{Jordan3}
	Lucas Ruhstorfer.
	\newblock Quasi-isolated blocks and the {A}lperin-{M}c{K}ay conjecture.
	\newblock {\em Forum Math. Sigma}, 10:Paper No. e48, 43, 2022.
	
	\bibitem[{Ruh}22b]{exceptional}
	Lucas {Ruhstorfer}.
	\newblock {The Alperin-McKay conjecture for the prime 2}.
	\newblock {\em arXiv e-prints}, page arXiv:2204.06373, April 2022.
	
	\bibitem[Sp\"{a}10]{BS}
	Britta Sp\"ath.
	\newblock Sylow {$d$}-tori of classical groups and the {M}c{K}ay conjecture.
	{II}.
	\newblock {\em J. Algebra}, 323(9):2494--2509, 2010.
	
	\bibitem[Sp\"{a}13]{IAM}
	Britta Sp\"{a}th.
	\newblock A reduction theorem for the {A}lperin-{M}c{K}ay conjecture.
	\newblock {\em J. Reine Angew. Math.}, 680:153--189, 2013.
	
	\bibitem[Sp{\"a}06]{BS2006}
	Britta Sp{\"a}th.
	\newblock {\em Die McKay-Vermutung f{\"u}r quasi-einfache Gruppen vom Lie-Typ}.
	\newblock Doctoral thesis, Technische Universit{\"a}t Kaiserslautern, 2006.
	
	\bibitem[Sp\"{a}21]{TypD}
	Britta {Sp{\"a}th}.
	\newblock {Extensions of characters in type D and the inductive McKay
		condition, I}.
	\newblock {\em arXiv e-prints}, page arXiv:2109.08230, September 2021.
	
	\bibitem[Tay22]{Taylor3}
	Jay Taylor.
	\newblock Harish-{C}handra cuspidal pairs.
	\newblock {\em Indag. Math. (N.S.)}, 33(1):24--38, 2022.
	
\end{thebibliography}
\end{document}